\newtheorem{prop}{Proposition}[section]
\newtheorem{thm}[prop]{Theorem}
\newtheorem{lem}[prop]{Lemma}
\newtheorem{cor}[prop]{Corollary}
\theoremstyle{definition}
\newtheorem*{defn}{Definition}
\newtheorem*{ex}{Example}
\newtheorem*{rem}{Remark}
\newtheorem*{rems}{Remarks}
\newtheorem*{nota}{Notation}
\def\co{\colon\thinspace}
\newcommand{\AAA}{\mathcal A}
\newcommand{\BB}{\mathcal B}
\newcommand{\C}{\mathbb C}
\newcommand{\CC}{\mathcal C}
\newcommand{\D}{\mathbb D}
\newcommand{\DD}{\mathcal D}
\newcommand{\Hp}{\mathbb H}
\newcommand{\HH}{\mathcal H}
\newcommand{\MM}{\mathcal M}
\newcommand{\N}{\mathbb N}
\newcommand{\PP}{\mathcal P}
\newcommand{\R}{\mathbb R}
\newcommand{\UU}{\mathcal U}
\newcommand{\VV}{\mathcal V}
\newcommand{\WW}{\mathcal W}
\newcommand{\Z}{\mathbb Z}
\newcommand{\oz}{\overline{z}}
\newcommand{\DB}{\bar{\partial}}
\newcommand{\CR}[1]{\bar{\partial}_{#1}}
\newcommand{\lra}{\longrightarrow}
\newcommand{\ra}{\rightarrow}
\DeclareMathOperator{\aut}{\mathfrak{aut}}
\DeclareMathOperator{\crit}{\mathrm{Crit}}
\DeclareMathOperator{\Diff}{\mathrm{Diff}}
\DeclareMathOperator{\dist}{\mathrm{dist}}
\DeclareMathOperator{\ev}{\mathrm{ev}}
\DeclareMathOperator{\id}{\mathrm{id}}
\DeclareMathOperator{\im}{\mathrm{Im}}
\DeclareMathOperator{\Int}{\mathrm{Int}}
\DeclareMathOperator{\loc}{\mathrm{loc}}
\DeclareMathOperator{\ord}{\mathrm{ord}}
\DeclareMathOperator{\st}{\mathrm{st}}
\begin{document}

\author{Hansj\"org Geiges}
\author{Kai Zehmisch}
\address{Mathematisches Institut, Universit\"at zu K\"oln,
Weyertal 86--90, 50931 K\"oln, Germany}
\email{geiges@math.uni-koeln.de, kai.zehmisch@math.uni-koeln.de}

\title[Cerf's Theorem]{
  Eliashberg's Proof of Cerf's Theorem
}

\date{}

\begin{abstract}
Following a line of reasoning suggested by Eliashberg,
we prove Cerf's theorem that any diffeomorphism of the
$3$-sphere extends over the $4$-ball.
To this end we develop a moduli-theoretic version
of Eliashberg's filling-with-holomorphic-discs method.
\end{abstract}

\maketitle


\section{Introduction\label{intro}}
The abelian group $\Gamma_n$ of orientation preserving diffeomorphisms
of the $(n-1)$-dimensional sphere $S^{n-1}$ modulo those that
extend to a diffeomorphism of the $n$-ball $D^n$ plays an important role
in differential topology, cf.~\cite{kosi93}. By the classical
work of Kervaire--Milnor~\cite{kemi63}
on homotopy spheres and Smale's~\cite{smal61} solution
of the higher-dimensional Poincar\'e conjecture, $\Gamma_n$ can
be identified with the set of oriented smooth structures on the topological
$n$-sphere for $n\geq 5$. The correspondence is given by associating
with $[f]\in\Gamma_n$ the smooth structure on $S^n$ obtained by
using the diffeomorphism $f$ of $S^{n-1}$ to glue two copies of $D^n$
along their boundary.

It is easy to see that $\Gamma_1$ and $\Gamma_2$ are trivial.
The result $\Gamma_3=0$ is due independently to Munkres~\cite{munk60}
and Smale~\cite{smal59}. The argument of
Munkres is quite elementary; using the Poincar\'e--Bendixon
theorem for foliations of the plane, Smale actually proves the stronger
statement that the $3$-dimensional orthogonal group is a strong
deformation retract of the diffeomorphism group of the $2$-sphere.
For $n\geq 5$, the groups $\Gamma_n$
are amenable to computation by the results of Kervaire--Milnor,
for instance $\Gamma_5=\Gamma_6=0$, and $\Gamma_7$ is the cyclic
group of order~$28$.

The statement $\Gamma_4=0$ is known as {\em Cerf's theorem}~\cite{cerf68}.
One consequence of this result is that there are no
exotic smooth structures on $S^4$ that can be obtained by
gluing two $4$-discs. Thanks to the obstruction theory
of Munkres~\cite{munk64}
for smoothings of combinatorial manifolds, Cerf's theorem
also implies, for instance, that every combinatorial $n$-manifold
admits a smoothing for $n\leq 7$, unique up to
diffeomorphism for~$n\leq 6$.

In \cite{elia92} Eliashberg proposed an ingenious proof of Cerf's
theorem based on his classification of contact structures on~$S^3$
and his method of filling with holomorphic discs~\cite{elia90}.
Eliashberg--Polterovich~\cite{elpo96} gave another topological
application of this method; they showed that the space of local
Lagrangian $2$-knots in $\R^4$ is contractible.

Unfortunately, the survey paper \cite{elia90} does not include proofs
of the essential compactness and regularity properties
of holomorphic discs.
In the present paper we give a proof of Cerf's theorem along
the lines suggested by Eliashberg. Rather than simply
filling in the details, we develop an alternative
approach to the filling with holomorphic discs in a moduli-theoretic
framework, for which the technical foundations
have been laid in the magisterial
monograph of McDuff--Salamon~\cite{mcsa04}.

Here is an outline of the paper.
In Section~\ref{section:idea} we describe the basic set-up of Eliashberg's
proof, and then indicate where our strategy differs from Eliashberg's.
In Section~\ref{section:contact} we recall Eliashberg's
argument why it suffices to prove the extension result for
contactomorphisms of the standard contact structure $\xi$ on~$S^3$.

In Section~\ref{section:Bishop} we introduce the moduli space of
holomorphic discs on which our proof of Cerf's theorem is built.
The boundaries of the holomorphic discs in question are
required to lie in a family
of totally real submanifolds; in a different
setting such varying boundary
conditions have also been investigated by Wendl~\cite{wend05}.
As described in Section~\ref{section:extension}, a
suitable evaluation map on this moduli space then gives an
`explicit' extension of a given contactomorphism of $(S^3,\xi )$
to a diffeomorphism of~$D^4$.
Sections~\ref{section:compactness} and~\ref{section:transversality}
are devoted to establishing compactness and transversality results
for our moduli space.

In order to ascertain that the holomorphic discs in question
are embedded, we need a relative adjunction inequality,
which in turn relies on results about positivity of intersection.
These are demonstrated in Section~\ref{section:positivity},
preceded by an account of the topological intersection
theory of discs in Section~\ref{section:intersection}.

One important word about notation: following \cite{mcsa04},
and with apologies to hyperbolic geometers, we write $\D\subset\C$ and
$\Hp\subset\C$ for the {\em closed\/} unit disc and upper half-plane,
respectively.
\section{Idea of the proof}
\label{section:idea}
Regard the $3$-sphere $S^3$ as the unit sphere in~$\C^2$ with 
complex Cartesian coordinates $(z_1=x_1+iy_1,z_2=x_2+iy_2)$.
Let $H$ be the height function on $S^3$ given by projection onto
the $y_2$-coordinate. For $t\in(-1,1)$ the level sets
$S^t:= H^{-1}(t)$ define a smooth foliation of
$S^3\setminus\{(0,0,0,\pm1)\}$ by $2$-spheres.
We regard the points
\[ q^t_{\pm}:= (0,0,\pm \sqrt{1-t^2},t) \]
as the poles of these $2$-spheres.

This family of poles, together with the two poles
$(0,0,0,\pm 1)$ of $S^3$, forms an unknot
\[ K:= \bigl\{ (0,0,\pm\sqrt{1-t^2},t) \co t\in[-1,1]\bigr\}\]
in~$S^3$. The complement $S^3\setminus K$ is foliated
by circles that bound holomorphic discs
\[ D_s^t:=D^4\cap \bigl(\C\times\{ x_2=s,y_2=t\}\bigr),\;\;
|t|<1,\;\; |s|<\sqrt{1-t^2}.\]
For each $t\in (-1,1)$, the circles $\partial D_s^t$
foliate the punctured $2$-sphere $S^t\setminus\{ q^t_{\pm}\}$;
we write $\mathfrak{C}_t$ for this foliation by circles.

Now let $\varphi$ be an orientation preserving diffeomorphism
of~$S^3$ that we wish to extend over~$D^4$. Any
isotopy from $\varphi$ to a diffeomorphism $\psi$ can be
swept out over a collar neighbourhood of $S^3$ in~$D^4$, where
the collar coordinate serves as isotopy parameter. Thus, if
$\psi$ extends over~$D^4$, then so does~$\varphi$.
In other words, it suffices to find an extension for a suitably
well-behaved representative in the isotopy class of~$\varphi$.

By the disc theorem~\cite[Theorem~8.3.1]{hirs76}, any
orientation preserving diffeomorphism of an $n$-manifold is isotopic to
one that fixes any given embedded $n$-disc. So we may require
without loss of generality that $\varphi$ fix a neighbourhood
of the unknot~$K$.

Write $\tilde{S}^t:=\varphi (S^t)$ for the images of the
$2$-spheres $S^t$ under~$\varphi$. Then $\varphi (\mathfrak{C}_t)$
is a foliation of the punctured $2$-sphere
$\tilde{S}^t\setminus\{ q^t_{\pm}\}$ by circles. Now the key idea
is as follows. Suppose we were able to find a foliation of
$D^4\setminus K$ by holomorphic discs with these properties:
\begin{itemize}
\item[(1)] The boundaries of the discs foliate the punctured $2$-spheres
$\tilde{S}^t\setminus\{ q^t_{\pm}\}$.
\item[(2)] This foliation of $\tilde{S}^t\setminus\{ q^t_{\pm}\}$
can be deformed to $\varphi (\mathfrak{C}_t)$.
\end{itemize}

Then the proof of Cerf's theorem $\Gamma_4=0$ would reduce
to $\Gamma_2=0$, or more precisely the
parametric version of the latter, which says that
the restriction map $\Diff (D^2)\ra\Diff (S^1)$ is
a Serre fibration with contractible fibre.

As observed by Eliashberg, this plan is feasible,
provided that $\varphi$ is a contactomorphism of the standard
contact structure $\xi$ on~$S^3$. Then both $\varphi (\mathfrak{C}_t)$
and the foliation of
$\tilde{S}^t\setminus\{ q^t_{\pm}\}$ given by the boundaries
of holomorphic discs are transverse to the characteristic foliation
$\tilde{S}^t_{\xi}$, so one foliation can be deformed into the other
by an isotopy along the characteristic foliation.
Moreover, Eliashberg's classification of contact
structures on $S^3$ shows that any diffeomorphism of $S^3$ is
indeed isotopic to a contactomorphism of $(S^3,\xi)$ --- for the
convenience of the
reader we reproduce the proof of this fact in
Section~\ref{section:contact} ---, so this assumption on
$\varphi$ is not restrictive.

In the present paper, Eliashberg's programme is carried out rigorously.
However, we deviate from his scheme in one important
respect. We define a moduli space of holomorphic discs
in $D^4\subset\C^2$ whose boundaries lie on the punctured spheres
$\tilde{S}^t\setminus\{ q^t_{\pm}\}$. The extension of
the contactomorphism $\varphi$ over $D^4$ is then defined
in terms of an evaluation map on this moduli space.
That way we obtain easier control over
the smoothness of the extension. This is true in particular
near the singular set~$K$, where the behaviour of the diffeomorphism
is clear from the observation that
the filling with holomorphic discs adapted to $\varphi$
coincides with the standard filling near~$K$.

The reader will notice that the vanishing of $\Gamma_2$ is not mentioned
explicitly in our proof. This is a consequence of
our working with holomorphic discs with three marked points on the boundary.
Such discs come with a fixed parametrisation, so we need no longer
worry about choices of diffeomorphisms of $S^1$ and their
extensions to~$D^2$.

In the following subsections we describe the backcloth of our proof,
mostly to set up notation.
\subsection{Contact geometry of~$S^3$}
\label{subsection:contact}
Consider the differential $1$-form
\[ \lambda := \frac{1}{2} (x_1\, dy_1-y_1\, dx_1+x_2\, dy_2-y_2\, dx_2)\]
on~$\R^4$. The $1$-form $\alpha :=\lambda|_{TS^3}$ on $S^3$
gives rise to a volume form $\alpha\wedge d\alpha$ defining
the positive orientation of~$S^3$ (with $S^3$
oriented as boundary of $D^4\subset\C^2$), so $\alpha$ is
a {\bf contact form}. Its kernel $\xi :=\ker\alpha$ is
the (positive) {\bf standard contact structure} on~$S^3$.
The $2$-form $\omega:=d\lambda$ is the standard symplectic form on~$\R^4$.

The {\bf characteristic foliation} $S^t_{\xi}$ induced by $\xi$
on the $2$-sphere $S^t$ is the singular $1$-dimensional
foliation defined by the intersection $T_pS^t\cap\xi_p$ for $p\in S^t$,
with singular points where $T_pS^t$ and the contact plane $\xi_p$ coincide,
which happens exactly at $p=q^t_{\pm}$. Notice that the unknot $K$
made up of these singular points and the two poles of $S^3$
is transverse to~$\xi$.

A {\bf contactomorphism} of $\xi=\ker\alpha$ is a diffeomorphism
$\varphi$ of $S^3$ such that $\varphi^*\alpha=f\alpha$ for some
smooth function $f\co S^3\ra\R^+$.

In Section~\ref{section:contact} below we also have to use the notion
of a tight contact structure, and we appeal to Gray stability
of contact structures. Both concepts are explained in~\cite{geig08}.
\subsection{Strict pseudoconvexity of~$S^3$}
\label{subsection:strict}
Write $J_0$ for the standard complex bundle structure on
$T\C^2$ induced by multiplication with~$i$.
The $3$-sphere $S^3$ may be regarded as the level set $\rho^{-1}(1)$
of the strictly plurisubharmonic function
\[\rho\co\C^2\lra\R,\;\;(z_1,z_2)\longmapsto |z_1|^2+|z_2|^2.\]
This implies that the complex tangencies $T_pS^3\cap J_0(T_pS^3)$
define a contact structure on~$S^3$, given as the kernel of
the contact form $-(1/2)d\rho\circ J_0|_{TS^3}$,
cf.~\cite[Chapter~5]{geig08}. As is well known (and easy to check),
this contact form equals the previously defined~$\alpha$.

The essential consequence of $S^3$ being strictly pseudoconvex is
that non-constant holomorphic discs in $\C^2$ with boundary on $S^3$
have their interior in the interior of~$D^4$, see
Proposition~\ref{prop:maximum} for the precise formulation of this
maximum principle. 

The characteristic foliation $S^t_{\xi}$ being non-singular
away from $q^t_{\pm}$ is equivalent to $S^t$ not having
any complex tangencies except at these two poles. In other
words, $S^t\setminus\{ q^t_{\pm}\}$ is a totally real surface.
\subsection{A holomorphic filling of~$S^3$}
For $|t|<1$ and $|s|<\sqrt{1-t^2}$ define a smooth 
real-valued function
\[ \theta (s,t):= \frac{t}{2\sqrt{1-t^2}}\cdot\ln
\left( \frac{\sqrt{1-t^2}+s}{\sqrt{1-t^2}-s} \right).\]
For each $t$ this defines a diffeomorphism from
$(-\sqrt{1-t^2},\sqrt{1-t^2})$ to~$\R$. Now consider
the parametrisations
\[ u^t_s(z):= \bigl( \sqrt{1-s^2-t^2}\cdot e^{i\theta (s,t)}\cdot z,
                   s,t\bigl),\;\; z\in\D,\]
of the holomorphic discs $D^t_s$. The rotation factor $e^{i\theta (s,t)}$
has been chosen in such a way that each leaf of the characteristic
foliation $S^t_{\xi}$ (outside $q^t_{\pm}$) is parametrised by
a map $s\mapsto u^t_s(z)$, $|s|<\sqrt{1-t^2}$, for some $z\in\partial\D$.
For this one simply needs to verify that
the tangent vector $\partial_s u^t_s(z)$ to~$S^3$
(for~$z\in\partial\D$) lies in the kernel of the $1$-form
$x_1\, dy_1-y_1\, dx_1-t\, dx_2$, which is the pull-back
of $\alpha$ to~$S^t$. 
Putting all these parametrisations together, we obtain a
diffeomorphism
\[ \begin{array}{rccc}
F_{\st}\co & (\D\times\Int\D,\partial\D\times\Int\D) & \lra        &
        (D^4\setminus K,S^3\setminus K)\\
           & (z,s,t)                                 & \longmapsto &
        u^t_s(z).
\end{array} \]
We can extend this map continuously over the boundary $\D\times\partial\D$
by sending $(z,s,t)$ with $|t|<1$ and $s=\pm \sqrt{1-t^2}$ to~$q^t_{\pm}$,
and $(z,0,\pm 1)$ to $(0,0,0,\pm 1)$. So the boundary is mapped onto~$K$.

This map $F_{\st}$ will be our prototype of a holomorphic filling.
The formal definition of such fillings
will be given in Section~\ref{section:extension}.
\subsection{Symplectic energy}
The {\bf symplectic energy} of the holomorphic disc $u^t_s$ is
defined as $E(u^t_s)=\int_{\D}(u^t_s)^*\omega$. One computes
\[ E(u^t_s)=\pi (1-s^2-t^2)\leq \pi (1-t^2),\]
i.e.\ for each $t$ we have a uniform bound on the energy.
A general result to this effect will be proved in
Proposition~\ref{prop:energy}.

Writing $z$ in polar coordinates as $z=re^{i\theta}$,
we compute
\[ \alpha (\partial_{\theta}u^t_s(z))=(1-s^2-t^2)/2\;\;\mbox{\rm for}
\;\; z\in\partial\D.\]
This gives another way to verify
the computation of the energy, since $\int_{\D}(u^t_s)^*\omega
=\int_{\partial\D}(u^t_s)^*\alpha$ by Stokes's theorem.

Since $\alpha$ evaluates positively on $\partial_{\theta}u^t_s(z)$
for $z\in\partial\D$,
the curve $u^t_s|_{\partial\D}$ is positively
transverse to the characteristic
foliation~$S^t_{\xi}$. It is another consequence of the maximum principle
that this holds true for arbitrary holomorphic discs,
see Proposition~\ref{prop:maximum}.
\subsection{Bundle pairs and Maslov index}
\label{subsection:Maslov}
The holomorphic disc $u^t_s$ has boundary on the totally real
submanifold $S^t\setminus\{ q^t_{\pm}\}$ of~$\C^2$.
So it defines a {\bf bundle pair} in the sense of
\cite[Definition~C.3.4]{mcsa04}, that is, a complex vector bundle
$(u^t_s)^*T\C^2$ over~$\D$ (here: the trivial $\C^2$-bundle) and a totally
real subbundle $(u^t_s|_{\partial\D})^*TS^t$
over the boundary~$\partial\D$.
At the point $u_0^t(e^{i\theta})
\in\partial D^t_0$, the fibre of this totally real subbundle
is given by $\R ie^{i\theta}\oplus\R$. From the axiomatic
definition of the Maslov index~$\mu$ given
in~\cite[Theorem~C.3.5]{mcsa04} it follows that $\mu (u^t_s)=2$
for all $t\in (-1,1)$ and $|s|<\sqrt{1-t^2}$.
\subsection{The standard neighbourhood of~$K$}
To avoid problems at the singular points~$q^t_{\pm}$, we
design a set-up where the holomorphic filling of $D^4$ corresponding to
some contactomorphism of $S^3$ coincides with the standard filling
$F_{\st}$ near~$K$.
For $\delta\in (0,1)$, we define a neighbourhood of $K$ by
\[ \UU^{\delta}:=F_{\st}\bigl(S^1\times\{ 1-\delta < s^2+t^2\leq 1\}\bigr)
\subset S^3.\]
The boundary $\partial\UU^{\delta}$ is a Lagrangian torus
with a holomorphic filling given by the restriction of $F_{\st}$
to $\D\times\{ s^2+t^2=1-\delta\}$.
We shall require the contactomorphism of $(S^3,\xi )$ to act as the
identity on some~$\UU^{\delta}$.
\subsection{The $2$-discs $Q_k$}
\label{subsection:Q}
In order to obtain a compact moduli space of holomorphic discs,
one needs to quotient out the $3$-dimensional automorphism
group of~$\D$ or, alternatively, place a restriction on three marked
points in~$\D$. We shall take the latter approach, using
the three points $i^k\in\partial\D$, $k=0,1,2$ as markers.

We define three open $2$-discs in $S^3$ by
\[ Q_k:=F_{\st}\bigl(\{i^k\}\times\Int\D\bigr)\subset S^3.\]
These discs are transverse to the punctured $2$-spheres
$S^t\setminus\{ q^t_{\pm}\}$, and the intersection
is given by one leaf $\ell^t_k$ of the characteristic foliation~$S^t_{\xi}$.

Notice that a contactomorphism $\varphi$ of $(S^3,\xi )$ will
map the characteristic foliation $S^t_{\xi}$ to the
characteristic foliation of the image sphere $\tilde{S}^t=\varphi (S^t)$.
The discs $Q_k$ will be used in Lemma~\ref{lem:ev} to show that,
as expected, imposing the condition that our holomorphic discs
map $i^k$ into the leaf $\tilde{\ell}^t_k:=\varphi (\ell^t_k)$
will cut down the dimensions of the moduli space by~$3$.

We also consider the smaller closed disc
\[ Q^{\delta}:=F_{\st}\bigl(\{1\}\times\{s^2+t^2\leq 1-\delta\}\bigr)
\subset Q_0.\]
In Proposition~\ref{prop:ev} we shall set up a diffeomorphism
between the disc $\varphi (Q^{\delta})$ and the moduli space
of holomorphic discs adapted to~$\varphi$, with three marked points
and boundary outside the neighbourhood~$\UU^{\delta}$.
\section{Reduction to contact geometry}
\label{section:contact}
In order to construct an extension of a given diffeomorphism of $S^3$ to a
diffeomorphism of~$D^4$, we first want to isotope it
to a diffeomorphism adapted to the set-up of the
previous section.

\begin{prop}
\label{prop:contact}
Any orientation preserving diffeomorphism of $S^3$
is isotopic to a contactomorphism of $(S^3,\xi)$
that fixes an open neighbourhood $\UU$ of $K$ pointwise.
\end{prop}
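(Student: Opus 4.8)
The plan is to prove this in two stages, first producing a contactomorphism up to isotopy, and then arranging that it fix a neighborhood of $K$ pointwise.
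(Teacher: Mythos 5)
Your two-stage plan coincides with the skeleton of the paper's proof, but as written it contains no mathematics: neither stage is carried out, and both require substantive input that your proposal does not identify. For the first stage, the fact that an arbitrary orientation preserving diffeomorphism $\varphi_0$ of $S^3$ is isotopic to a contactomorphism of $(S^3,\xi)$ is not a soft statement; the paper obtains it by observing that the push-forward $T\varphi_0(\xi)$ is again a positive tight contact structure on $S^3$, invoking Eliashberg's classification (uniqueness of such structures up to isotopy), and then using Gray stability to convert the isotopy of contact structures into an isotopy from $\varphi_0$ to a contactomorphism $\varphi_1$. Without naming these ingredients, the first stage is simply a restatement of part of what is to be proved.

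For the second stage, the difficulty is that the adjustment making the map fix a neighbourhood of $K$ pointwise must itself proceed through contactomorphisms --- an arbitrary smooth isotopy rel $K$ would destroy the contact condition, leaving you with a diffeomorphism that fixes a neighbourhood of $K$ but is no longer a contactomorphism. The paper handles this with the contact disc theorem: it exhibits an explicit contactomorphism $j$ of a ball $B\subset(\R^3,\ker(dw+u\,dv-v\,du))$ into $(S^3,\xi)$ carrying the unit circle $C$ to the transverse unknot $K$, compares the two contact embeddings $j$ and $\varphi_1\circ j$, and obtains a contact isotopy whose time-one map $\psi_1$ satisfies $\psi_1\circ\varphi_1\circ j=j$; then $\psi_1\circ\varphi_1$ is a contactomorphism fixing $\UU:=j(\mathrm{int}\,B)\supset K$ pointwise. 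Your proposal would need to supply both this model neighbourhood of $K$ and the appeal to the contact disc theorem (or an equivalent uniqueness statement for contact embeddings of balls) before the second stage can go through.
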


\begin{proof}
Let $\varphi_0$ be a given orientation preserving
diffeomorphism of~$S^3$. Then $T\varphi (\xi )$ is a positive
tight contact structure on the $3$-sphere. By the uniqueness of
such structures up to isotopy, and with Gray stability, we can isotope
$\varphi_0$ to a contactomorphism $\varphi_1$
of~$\xi$, cf.~\cite[Lemma~4.11.1]{geig08}.

There is a contactomorphism
\[ (\R^3,\ker (dw+u\, dv -v\, du))\lra
(S^3\setminus\{ (0,1,0,0)\} ,\xi )\]
which sends the
unit circle $C$ in the $uv$-plane to~$K$. An explicit description
of such a contactomorphism can be found
in~\cite[Proposition~2.1.8]{geig08}; simply compose
the contactomorphism
\[ (\R^3,\ker (dw+u\, dv -v\, du))\lra
(S^3\setminus\{ (0,0,0,1)\},\xi )\]
described there with the
contactomorphism of $(S^3,\xi )$ induced by $(z_1,z_2)\mapsto (z_2,z_1)$.

This contactomorphism restricts to a contact embedding $j$ of
a ball $B$ of radius~$2$, say, in $(\R^3,\ker (dw+u\, dv -v\, du))$
into $(S^3,\xi )$, sending $C$ to~$K$. A second contact
embedding of this ball is given by $\varphi_1\circ j$.
By the contact disc theorem~\cite[Theorem 2.6.7 and Remark~2.6.8]{geig08},
there is a contact isotopy whose time-$1$ map $\psi_1$
is a contactomorphism of $(S^3,\xi )$ such that
$\psi_1\circ\varphi_1\circ j=j$.

So our initial diffeomorphism $\varphi_0$ is isotopic to the contactomorphism
$\psi_1\circ\varphi_1$, which fixes the neighbourhood
$\UU:=j(\mathrm{int}\, B)$.
\end{proof}

The isotopy from $\varphi_0$ to $\psi_1\circ\varphi_1$
can be swept out over a collar neighbourhood of $S^3=\partial D^4$
in~$D^4$. Hence, Cerf's theorem will follow if we can find
an extension of $\psi_1\circ\varphi_1$ to a diffeomorphism of~$D^4$.

So from now on we consider a contactomorphism $\varphi$ of $(S^3,\xi)$
that fixes an open neighbourhood $\UU$ of $K$ pointwise,
and we choose $\delta\in(0,1)$ such that the 
closure $\overline{\UU}^{\delta}$ of the standard
neighbourhood $\UU^{\delta}$ is contained in~$\UU$.
\section{Bishop discs}
\label{section:Bishop}
In Section \ref{section:idea} we described a simple filling
of the $3$-sphere by holomorphic discs, i.e.\ a foliation
of $D^4\setminus K$ by discs with boundary in $S^3\setminus K$.
We now want to construct another such filling, one that is
related to the contactomorphism~$\varphi$.

We begin by introducing the corresponding boundary value problem.
For fixed $t\in(-1,1)$ we are looking for smooth (up to the boundary)
solutions
\[ u^t\co (\D,\partial\D)\lra (D^4,\tilde{S}^t\setminus\{q^t_{\pm}\})\]
of the homogeneous Cauchy--Riemann equation $\DB u^t=0$
(in other words: holomorphic discs)
having boundary values in $\tilde{S}^t:=\varphi(S^t)$.
We remarked before that the punctured $2$-spheres
$S^t\setminus\{ q^t_{\pm}\}$
are totally real submanifolds of~$\C^2$. Hence, so are their
images $\tilde{S}^t\setminus\{q^t_{\pm}\}$ under $\varphi$,
since $\varphi$ preserves the contact structure
$\xi =TS^3\cap J_0(TS^3)$.

In addition, we require the relative homotopy class
$[u^t]\in \pi_2(D^4,\tilde{S}^t\setminus\{q^t_{\pm}\})$
to equal $A^t$, which we define as the class
in $\pi_2(D^4,\tilde{S}^t\setminus\{q^t_{\pm}\})$
that maps to the class of $[\varphi\circ u^t_0|_{\partial\D}]\in
\pi_1(\tilde{S}^t\setminus\{q^t_{\pm}\})$ under the boundary
homomorphism~$\partial_*$. (This notion of relative homotopy class
will be relevant for our discussion of Maslov indices, and our
more general set-up in Section~\ref{section:positivity}.)
Moreover, we fix three marked points by imposing the condition
$u^t(i^k)\in\tilde{\ell}^t_k$, where
$\tilde{\ell}^t_k:=\varphi(\ell^t_k)$, $k=0,1,2$, are three leaves
of the characteristic foliation $\tilde{S}^t_{\xi}=\varphi(S^t_{\xi})$.
Such a holomorphic disc will be called a
$t$-{\bf level Bishop disc for~$\varphi$}.
The collection
\[ \WW_{\varphi}:= \bigl\{ u^t\co t\in (-1,1),\;
u^t\;\text{is a}\; t \text{-level Bishop disc for} \; \varphi \bigr\}\]
of all such discs is the {\bf moduli space of Bishop discs}.
We write $\MM_{\varphi}(t)\subset\WW_{\varphi}$ for the moduli space
of $t$-level Bishop discs for~$\varphi$.

Section~\ref{section:idea} can be read as a description
of the structure of $\WW_{\id}$.
We are now aiming for a similar description of $\WW_{\varphi}$
for any contactomorphism $\varphi$ as in Proposition~\ref{prop:contact}.

In the following propositions we collect some relevant facts about our
Bishop discs.

\begin{prop}
\label{prop:mu2}
Every Bishop discs for $\varphi$ has Maslov index~$2$, that is,
$\mu (A^t)=2$ for all $t\in (-1,1)$.
\end{prop}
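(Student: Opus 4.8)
The plan is to deduce the whole statement from two ingredients only: the homotopy invariance of the Maslov index recorded in \cite[Theorem~C.3.5]{mcsa04}, and the fact, fixed at the end of Section~\ref{section:contact}, that $\varphi$ is the identity on a neighbourhood of the singular set $K$, hence near the poles $q^t_{\pm}$. No integral or explicit curvature computation should be needed; everything is topological.

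First I would observe that, since the ambient bundle $(u^t)^*T\C^2$ is the trivial $\C^2$-bundle over the contractible disc $\D$, it carries a canonical trivialisation coming from the global frame of $T\C^2$. With respect to this trivialisation the bundle pair is determined up to isomorphism by its totally real boundary subbundle, so by the axioms in \cite[Theorem~C.3.5]{mcsa04} the index $\mu(A^t)$ depends only on the homotopy class of the loop of totally real $2$-planes
\[ \theta\longmapsto T_{u^t(e^{i\theta})}\bigl(\tilde{S}^t\setminus\{q^t_{\pm}\}\bigr). \]
This loop is the composition of the Gauss map of the totally real surface $\tilde{S}^t\setminus\{q^t_{\pm}\}$ with the boundary loop $u^t|_{\partial\D}$; consequently $\mu(A^t)$ depends only on the free homotopy class of $u^t|_{\partial\D}$ in $\tilde{S}^t\setminus\{q^t_{\pm}\}$, which by construction is the class $\partial_*A^t=[\varphi\circ u^t_0|_{\partial\D}]$.

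Next I would exploit that $\tilde{S}^t\setminus\{q^t_{\pm}\}$ is a cylinder, so its fundamental group is infinite cyclic and is generated by a small circle $c$ encircling the pole $q^t_{+}$. The loop $\varphi\circ u^t_0|_{\partial\D}$, being the $\varphi$-image of the core circle $\partial D^t_0$, represents this generator, and hence is freely homotopic in $\tilde{S}^t\setminus\{q^t_{\pm}\}$ to $\varphi(c)$. Because $\varphi$ restricts to the identity near $K\ni q^t_{\pm}$, one has $\tilde{S}^t=S^t$ and $\varphi=\id$ near $q^t_{+}$; thus $\varphi(c)=c$, and along $c$ the Gauss map of $\tilde{S}^t$ literally coincides with that of the standard sphere $S^t$. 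By homotopy invariance, $\mu(A^t)$ therefore equals the Maslov index of the standard disc $u^t_0$, which was shown to be $2$ in Subsection~\ref{subsection:Maslov}.

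The step deserving the most care, and the one I regard as the main obstacle, is the reduction in the second paragraph: making precise that the Maslov index of the bundle pair equals the winding number of the Gauss loop (concretely, the degree of its squared-determinant phase) with \emph{no} extra contribution from the chosen filling. This rests on the vanishing of $c_1(\C^2)$ together with the contractibility of $\D$, which together guarantee that the bundle pair is pinned down by its totally real boundary data alone; once this is established, the homotopy of the boundary loop into a pole neighbourhood, where $\varphi$ is standard, is routine. As a consistency check one may verify directly that the Gauss loop of $S^t$ along $\partial D^t_0$ winds twice, recovering the value $2$ of Subsection~\ref{subsection:Maslov}.
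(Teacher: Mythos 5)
Your proof is correct, and it rests on the same two pillars as the paper's own argument: the computation $\mu(u^t_0)=2$ from Section~\ref{subsection:Maslov}, and the fact that $\varphi=\id$ near $K$, which lets you move the boundary circle into a region where $\tilde{S}^t$ coincides with the standard sphere. The difference lies in how the reduction to boundary data is justified. The paper stays at the level of relative homotopy classes: since $D^4$ is contractible, the boundary homomorphism $\partial_*\co\pi_2(D^4,\tilde{S}^t\setminus\{q^t_{\pm}\})\ra\pi_1(\tilde{S}^t\setminus\{q^t_{\pm}\})$ is an isomorphism, so by homotopy invariance of $\mu$ it suffices to check $\partial_*A^t=\partial_*[u^t_s]$ for a standard disc $u^t_s$ with boundary circle inside $\UU^{\delta}$ --- which is precisely your small circle near the pole. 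You instead bypass relative homotopy groups altogether: the global trivialisation of $T\C^2$ turns the bundle pair into a loop in the totally real Grassmannian (your Gauss loop), so that $\mu$ is manifestly an invariant of the free homotopy class of the boundary circle in the punctured sphere. Your route is somewhat more robust --- it uses only triviality of the ambient bundle, not simple-connectivity of the ambient manifold, so it would survive in situations where $\partial_*$ fails to be injective --- at the cost of re-deriving a winding-number description that the paper extracts directly from the axioms of \cite[Theorem~C.3.5]{mcsa04}. One small point to tighten: $\pi_1$ of the cylinder has two generators, so ``represents this generator'' leaves a sign ambiguity; you should fix the orientation of $c$ by taking it freely homotopic to $u^t_0|_{\partial\D}$ in $S^t\setminus\{q^t_{\pm}\}$ (for instance $c=u^t_s(\partial\D)$ with $1-\delta-t^2<s^2<1-t^2$, as in the paper) before applying $\varphi$, since otherwise the argument only determines $\mu(A^t)$ up to sign.
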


\begin{proof}
Recall from Section~\ref{subsection:Maslov} that
that the Maslov index of $u^t_0$ equals $2$ for all $t\in(-1,1)$.
Since the boundary homomorphism $\partial_*\co
\pi_2(D^4,\tilde{S}^t\setminus\{q^t_{\pm}\})\ra
\pi_1(\tilde{S}^t\setminus\{q^t_{\pm}\})$ is an isomorphism,
and the Maslov index is invariant under homotopies, it suffices to
show that $\partial_*A^t=\partial_*[u^t_0]$.

For a given $t$ choose $s\in (-1,1)$ with $1-\delta-t^2<s^2<1-t^2$.
Then, since $\varphi =\id$ on $\UU^{\delta}$,
we have $[\varphi\circ u^t_s|_{\partial\D}]=
[u^t_s|_{\partial\D}]$. Hence, we obtain the following sequence of
equalities, where the first one holds by definition of the
class~$A^t$.
\[ \partial_*A^t=[\varphi\circ u^t_0|_{\partial\D}]=[\varphi\circ
u^t_s|_{\partial\D}]=[u^t_s|_{\partial\D}]=[u^t_0|_{\partial\D}]=
\partial_*[u^t_0].\]
This proves the proposition.
\end{proof}

In the sequel we have to appeal several times to the maximum principle
for holomorphic discs in $\C^2$ with boundary on~$S^3$. We briefly recall
the statement in the form needed for our purposes. This maximum principle
also explains why we could define our Bishop disc from the outset
as maps into the $4$-ball $D^4$ rather than~$\C^2$.

\begin{prop}[Maximum principle]
\label{prop:maximum}
Let $\Sigma\subset S^3$ be a totally real surface in~$\C^2$, so that
the characteristic foliation $\Sigma_{\xi}$ is non-singular.
Let $u\co (\D,\partial\D )\ra (\C^2,\Sigma )$ be a non-constant holomorphic
disc. Then $u$ maps the interior of $\D$ to the interior of the
$4$-ball~$D^4$, and $u|_{\partial\D}$ is an immersion
positively transverse to~$\Sigma_{\xi}$.
\end{prop}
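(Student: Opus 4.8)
The plan is to exploit that $D^4=\rho^{-1}([0,1])$ and $S^3=\rho^{-1}(1)$ for the strictly plurisubharmonic function $\rho(z_1,z_2)=|z_1|^2+|z_2|^2$, and to study the auxiliary function $v:=\rho\circ u=|u_1|^2+|u_2|^2$ on $\D$. For a holomorphic map $u=(u_1,u_2)$ a direct computation gives
\[
\Delta v = 4\bigl(|u_1'|^2+|u_2'|^2\bigr)=4\,|u'|^2\ \geq\ 0,
\]
so $v$ is subharmonic on $\D$, and $\Delta v$ vanishes at a point precisely where $u'$ does. Since $u(\partial\D)\subset\Sigma\subset S^3=\rho^{-1}(1)$, the boundary values satisfy $v\equiv 1$ on $\partial\D$. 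The weak maximum principle for subharmonic functions then yields $v\leq 1$ throughout $\D$, which is exactly $u(\D)\subset D^4$.

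Next I would upgrade this to the strict interior statement $u(\Int\D)\subset\Int D^4$. If $v$ attained the boundary value $1$ at some interior point, the strong maximum principle would force $v\equiv 1$ on $\D$; then $\Delta v\equiv 0$, so $u'\equiv 0$ by the computation above, and $u$ would be constant, contradicting the hypothesis. Hence $v<1$ on $\Int\D$, as required.

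For the boundary behaviour I would invoke the Hopf boundary point lemma. We now have $v$ subharmonic with $v<1=\max v$ in the interior and $v=1$ on $\partial\D$; as $\D$ satisfies the interior ball condition and $u$ is smooth up to the boundary, Hopf's lemma gives a strictly positive outward normal derivative $\partial_r v(z_0)>0$ at every $z_0\in\partial\D$, where $r$ is the radial coordinate on $\D$. It remains to translate this into a statement about $\alpha$. Writing $z=re^{i\theta}$, the Cauchy--Riemann equation $\DB u=0$ gives $\partial_\theta u=r\,J_0\,\partial_r u$, so along $\partial\D$ (where $r=1$) we have $J_0\,\partial_\theta u=-\partial_r u$. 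Using $\alpha=-\tfrac12\,d\rho\circ J_0|_{TS^3}$ from Section~\ref{subsection:strict}, I then compute
\[
\alpha\bigl(\partial_\theta u(z_0)\bigr)
 = -\tfrac12\,d\rho\bigl(J_0\,\partial_\theta u(z_0)\bigr)
 = \tfrac12\,d\rho\bigl(\partial_r u(z_0)\bigr)
 = \tfrac12\,\partial_r v(z_0)\ >\ 0 .
\]
In particular $\partial_\theta u(z_0)\neq 0$, so $u|_{\partial\D}$ is an immersion, and the inequality $\alpha(\partial_\theta u)>0$ says exactly that the boundary loop is positively transverse to the leaves of the characteristic foliation $\Sigma_\xi=\ker\alpha\cap T\Sigma$, which completes the argument.

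The individual steps are routine once the subharmonicity of $v$ is in hand; the part demanding the most care is the boundary analysis. One must guarantee enough regularity of $u$ up to $\partial\D$ to apply the Hopf lemma and to make $\partial_r v$ meaningful, and one must keep the sign conventions in $\alpha=-\tfrac12\,d\rho\circ J_0$ and in the polar form of the Cauchy--Riemann equation mutually consistent, since a slip there would reverse the claimed coorientation of the transversality.
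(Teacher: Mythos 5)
Your proof is correct and takes essentially the same route as the paper: subharmonicity of $\rho\circ u$, the strong maximum principle of E.~Hopf for the interior statement, and the Hopf boundary point lemma for the boundary behaviour. The only difference is presentational: where the paper argues qualitatively that $J_0$ sends the outer normal of $S^3$ to a vector positively transverse to $\xi=\ker\alpha$, you make this explicit via the identity $\alpha(\partial_\theta u)=\tfrac12\,\partial_r(\rho\circ u)>0$ obtained from the polar form of the Cauchy--Riemann equation, which packages the immersion, transversality, and positivity claims in a single formula.
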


\begin{proof}
A straightforward computation, cf.~\cite[Lemma~4.11.3]{geig08},
gives
\[ \Delta (\rho\circ u)=2\, d\lambda (u_x,J_0u_x)\geq 0.\]
For the definition of $\lambda$ and $\rho$ see Sections
\ref{subsection:contact} and~\ref{subsection:strict}, respectively.
Then the strong maximum principle of E.~Hopf, cf.\
\cite[Section~2.3]{prwe84}, shows that $\rho\circ u=\|u\|^2$ cannot attain
a maximum in the interior of~$\D$. The boundary point lemma ({\em ibid.})
implies that $u$ is transverse to $S^3$ along the boundary.
Since the characteristic foliation $\Sigma_{\xi}$ is given by
the intersection of the complex tangencies $\xi$ to $S^3$ with~$\Sigma$,
and $u$ is holomorphic, this forces $u|_{\partial\D}$ to be an
immersion transverse to~$\Sigma_{\xi}$. The statement about this
immersion being {\em positively\/} transverse to $\Sigma_{\xi}$
follows because the complex structure $J_0$ sends the outer
normal of $S^3\subset\C^2$ to a vector positively transverse
to $\xi=\ker\alpha$.
\end{proof}

\begin{rem}
Suppose $M$ is a compact level set $M=\rho^{-1}(1)$ of a smooth function
$\rho\co W\rightarrow\R$ on an almost complex manifold $(W,J)$,
where $1$ and nearby values are regular for~$\rho$; any compact
orientable codimension~$1$ submanifold of $W$ can be realised in this way.
The level set $M$ is called {\bf $J$-convex} if the complex tangencies
to $M$ define a positive contact structure, that is, if 
$-d\rho\circ J|_{TM}$ is a contact form inducing
the natural orientation of~$M$. One then computes that
for a smooth function $f\co\R\ra\R$ with $f''\gg f'>0$ the composition
$f\circ\rho$ is strictly plurisubharmonic, i.e.\
$-d(d(f\circ\rho)\circ J)(X,JX)>0$ for all nonzero tangent vectors
$X\in TW$ in a neighbourhood of~$M$. From this observation
one concludes that the maximum principle holds, {\em mutatis mutandis},
for $J$-convex hypersurfaces.
\end{rem}

Following Lazzarini~\cite{lazz00}, cf.~\cite[Appendix~E]{mcsa04},
we say a holomorphic disc $u$ is {\bf simple}
if there are no two disjoint non-empty open subsets $U,V\subset\D$
such that $u(U)=u(V)$.

\begin{prop}
\label{prop:simple}
All Bishop discs are simple.
\end{prop}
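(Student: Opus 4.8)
The plan is to exclude non-simple discs by a homotopy-class argument: a non-simple holomorphic disc is a multiple cover of a simple one, whereas the prescribed boundary class of a Bishop disc is indivisible, so no covering of multiplicity $\geq 2$ is possible.

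First I would suppose, for a contradiction, that some $t$-level Bishop disc $u=u^t$ is not simple, and apply the structure theorem for non-constant holomorphic discs with totally real boundary due to Lazzarini~\cite{lazz00}, as presented in~\cite[Appendix~E]{mcsa04}. This produces a simple holomorphic disc $v\co(\D,\partial\D)\ra(D^4,\tilde{S}^t\setminus\{q^t_{\pm}\})$ together with an integer $m\geq 2$ such that $u$ covers $v$ with multiplicity $m$; on the boundary this yields $[u|_{\partial\D}]=m\,[v|_{\partial\D}]$ in $\pi_1(\tilde{S}^t\setminus\{q^t_{\pm}\})$. Here I use that $\varphi$ fixes $q^t_{\pm}\in K$, so that $\tilde{S}^t\setminus\{q^t_{\pm}\}=\varphi(S^t\setminus\{q^t_{\pm}\})$ is again a twice-punctured $2$-sphere, that is, a cylinder, and hence $\pi_1(\tilde{S}^t\setminus\{q^t_{\pm}\})\cong\Z$.

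Next I would read off the contradiction from the indivisibility of the boundary class. By the definition of the class $A^t=[u]$ we have $\partial_*[u]=\partial_*A^t=[\varphi\circ u^t_0|_{\partial\D}]$; since $u^t_0|_{\partial\D}$ traverses the equatorial circle of $S^t\setminus\{q^t_{\pm}\}$ exactly once, its image under the diffeomorphism $\varphi$ generates $\pi_1\cong\Z$. Thus $[u|_{\partial\D}]$ is a generator, in particular nonzero and indivisible, which is incompatible with the relation $[u|_{\partial\D}]=m\,[v|_{\partial\D}]$ for $m\geq 2$. This contradiction shows that every Bishop disc is simple.

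The delicate point is the correct invocation of Lazzarini's theorem: one must ensure that the multiplicity $m\geq 2$ relating $u$ to the underlying simple disc $v$ is genuinely reflected on the boundary loops, so that $[u|_{\partial\D}]$ really is $m$ times $[v|_{\partial\D}]$. As a cross-check, the maximum principle (Proposition~\ref{prop:maximum}) shows that $v|_{\partial\D}$ is positively transverse to the characteristic foliation $\tilde{S}^t_{\xi}$, whose leaves run between the two punctures $q^t_{\pm}$; hence $[v|_{\partial\D}]$ is a positive multiple of the generator, and $1=m\,[v|_{\partial\D}]$ is plainly impossible for $m\geq 2$. One may also note the consistency with the Maslov index: from $\mu(u)=m\,\mu(v)$ and $\mu(A^t)=2$ (Proposition~\ref{prop:mu2}) a multiplicity $m\geq 3$ is excluded outright, the remaining case $m=2$ being dispatched by the boundary argument above.
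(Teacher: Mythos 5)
Your argument rests on a structure theorem that is false for discs. You assert that a non-simple holomorphic disc $u$ factors as an $m$-fold cover of a simple disc $v$, $m\geq 2$, with $[u|_{\partial\D}]=m\,[v|_{\partial\D}]$ and (in your cross-check) $\mu(u)=m\,\mu(v)$. That is the structure theorem for \emph{closed} holomorphic curves (a non-simple sphere is a multiple cover of a simple one, \cite[Proposition~2.5.1]{mcsa04}); for discs with totally real boundary it is precisely the statement that \emph{fails}, and this failure is the reason Lazzarini's paper \cite{lazz00} is as delicate as it is. What Lazzarini actually proves is much weaker: a non-constant disc $u$ admits a simple disc $v$ with $v(\D)\subset u(\D)$ and $v(\partial\D)\subset u(\partial\D)$; in his later decomposition theorem, $u(\D)$ is a \emph{union} of images of several simple discs $v_i$, with a relation $[u]=\sum_i m_i[v_i]$ only in relative homology. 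There is in general no branched cover $(\D,\partial\D)\ra(\D,\partial\D)$ relating $u$ to a single simple disc, hence no multiplicative relation between boundary classes or Maslov indices. A concrete counterexample: restrict $f(z)=z^3-3z$ to the closed upper half-plane, viewed as a disc with boundary on $\R\cup\{\infty\}\subset\hat{\C}$. Two of the three components of $\Hp\setminus f^{-1}(\R)$ map onto the same half-plane, so this disc is not simple; but its generic fibres have different cardinalities over different parts of its image (two over one half-plane, one over the other), which is impossible for a composition $v\circ\phi$ with $\deg\phi\geq 2$. So both your main step and your cross-checks use relations you are not entitled to.

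The idea could in principle be salvaged with Lazzarini's full decomposition theorem: writing $\partial_*[u]=\sum_i m_i\,\partial_*[v_i]$ in $H_1(\tilde{S}^t\setminus\{q^t_{\pm}\})\cong\Z$, each $\partial_*[v_i]$ is a \emph{positive} multiple of the generator by Proposition~\ref{prop:maximum}, so $1=\sum_i m_ik_i$ forces a single simple disc of multiplicity one; one must then still argue that multiplicity one in the decomposition implies $u$ itself is simple. This is a far heavier hammer than the problem requires. The paper's route avoids all of this: the maximum principle together with the homotopy condition forces $u|_{\partial\D}$ to be an \emph{embedded} circle (it is immersed, transverse to the characteristic foliation of the cylinder $\tilde{S}^t\setminus\{q^t_{\pm}\}$, and winds around it exactly once), and Lemma~\ref{lem:simple} then shows directly, by an elementary connectedness argument along a segment using only the local structure of holomorphic maps, that a holomorphic disc with embedded boundary whose interior avoids the boundary image must be simple.
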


\begin{proof}
The maximum principle tells us that $u|_{\partial\D}\co\partial\D\ra
\tilde{S}^t\setminus\{ q^t_{\pm}\}$ is an immersion transverse
to the characteristic foliation $\tilde{S}^t_{\xi}$.
By the homotopical condition on $u|_{\partial\D}$
and the nature of the characteristic foliation $\tilde{S}^t_{\xi}$,
the restriction of $u$ to $\partial\D$ must then in fact be an embedding.

From this observation one concludes that $u^t$ must be simple; since the
argument for this last step is of interest elsewhere, we formulate it
(in slightly greater generality) as a separate lemma.
\end{proof}

\begin{lem}
\label{lem:simple}
Let $(W,J)$ be an almost complex manifold.
If $u\co \D\ra W$
is a $J$-holomorphic disc such that $u|_{\partial\D}$ is
an embedding and no interior point of $\D$ is mapped to~$u(\partial\D )$,
then $u$ is simple.
\end{lem}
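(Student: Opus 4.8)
The plan is to argue by contradiction and to reduce the global assertion to the boundary behaviour of a branched covering. So suppose $u$ is not simple, i.e.\ there are disjoint nonempty open sets $U,V\subset\D$ with $u(U)=u(V)$. First I would check that $U$ and $V$ must lie in the interior $\Int\D$. Indeed, if some $z_0\in U$ were a boundary point, then $u(z_0)\in u(\partial\D)$ would also lie in $u(V)$, producing a point $w_0\in V$ with $u(w_0)=u(z_0)$. Since no interior point is mapped to $u(\partial\D)$, the point $w_0$ would have to lie on $\partial\D$ as well; the injectivity of $u|_{\partial\D}$ would then force $w_0=z_0$, contradicting $U\cap V=\emptyset$. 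Hence $U,V\subset\Int\D$, and the failure of simplicity is witnessed entirely in the interior. Note that the two hypotheses together are exactly the statement $u^{-1}(u(\partial\D))=\partial\D$.

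Next I would invoke the structure theory of $J$-holomorphic discs of Lazzarini~\cite{lazz00}, cf.~\cite[Appendix~E]{mcsa04}: a non-constant $J$-holomorphic disc whose image overlaps itself on an interior open set is multiply covered. Concretely, this yields a simple $J$-holomorphic disc $v$ together with a non-constant holomorphic map $\phi\co\D\ra\D$ of degree $m\geq 2$ such that $u=v\circ\phi$. Being a proper holomorphic self-map of the disc, $\phi$ is a finite Blaschke product of degree $m$; in particular it restricts on the boundary to an $m$-fold covering $\partial\D\ra\partial\D$.

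From here the contradiction would be immediate. Since $m\geq 2$, I can pick distinct points $z_1\neq z_2\in\partial\D$ with $\phi(z_1)=\phi(z_2)$, and then $u(z_1)=v(\phi(z_1))=v(\phi(z_2))=u(z_2)$, contradicting the assumption that $u|_{\partial\D}$ is an embedding. This is the mechanism one should expect: the standard example $z\mapsto z^2$ shows that interior non-simplicity by itself is unobstructed, so any proof must route the contradiction through the boundary, and a degree-$m$ cover is forced to be $m$-to-one there.

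The main obstacle will be the middle step: upgrading the purely local datum ``two disjoint open sets with the same image'' to the global factorisation $u=v\circ\phi$ through a Blaschke product. This is the substance of Lazzarini's somewhere-injectivity theorem, and the delicate point in the present setting is that no totally real boundary condition is assumed a priori. It is precisely the hypotheses that $u|_{\partial\D}$ be an embedding and that $u^{-1}(u(\partial\D))=\partial\D$ which let one control the interior covering all the way up to $\partial\D$ and recognise $\phi$ as a genuine Blaschke product of degree $\geq 2$, whose boundary fibres therefore consist of $m$ distinct points. I would organise the write-up so that this analytic input is quoted from \cite[Appendix~E]{mcsa04}, with the elementary boundary reduction above doing the work of localising the problem to where that input applies.
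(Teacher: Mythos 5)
Your reduction to $U,V\subset\Int\D$ is fine, and your instinct that the contradiction must pass through the boundary is correct; but the middle step --- ``non-simple implies $u=v\circ\phi$ with $v$ simple and $\phi$ a Blaschke product of degree $m\geq 2$'' --- is a genuine gap that cannot be repaired by citation. That global factorisation theorem holds for \emph{closed} $J$-holomorphic curves, but it is false for discs, and this failure is exactly the reason Lazzarini's paper \cite{lazz00} exists: his theorem asserts the \emph{existence} of a somewhere injective disc whose image is contained in $u(\D)$, obtained from a decomposition of the domain by a graph (``frame'') into subdomains on each of which $u$ factors through a simple curve; it does not produce a global factorisation, and neither \cite{lazz00} nor \cite[Appendix~E]{mcsa04} contains the statement you want to quote. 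A concrete counterexample to your quoted claim: uniformise the simply connected domain $\Omega=\{|z|<1\}\cup\{|z-1|<\tfrac12\}$ by a Riemann map $\psi\co\Int\D\ra\Omega$ and set $u=f\circ\psi$ with $f(z)=z^2$. This disc is non-simple (small discs around $\pm a$, $a$ near $0$, have the same image), yet it is $2$-to-$1$ over one open subset of its image and $1$-to-$1$ over another (points near $f(1)=1$ have only one preimage in $\Omega$), so it cannot factor through a branched cover of any degree $m$ with simple target. Under the hypotheses of Lemma~\ref{lem:simple} a factorisation does of course exist --- but only with $m=1$, i.e.\ it is equivalent to the conclusion; deriving it from non-simplicity is precisely the work the proof has to do, not an input one can cite.

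The paper builds exactly this bridge from interior overlap to boundary multiplicity by hand, using only the \emph{local} version of the covering statement, which is citable: \cite[Lemma~2.4.3]{mcsa04} says that near a non-critical point $z_0$ which is a limit of non-injective points whose partners converge to some $w_0\neq z_0$ in $\Int\D$, there is a local holomorphic $\phi$ with $\phi(z_0)=w_0$ and $u=u\circ\phi$. Concretely: critical points are finite in number (isolated in the interior by \cite[Lemma~2.4.1]{mcsa04}, and barred from accumulating at $\partial\D$ because $u|_{\partial\D}$ is an embedding); one chooses a segment $[z^*,z^{**})$ from a non-critical point $z^*\in U$ to a boundary point $z^{**}$, avoiding the critical set, and shows that the set of points of the segment having a neighbourhood of non-injective points is open, non-empty, and --- via the local lemma --- closed, hence the whole segment. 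Thus non-injective points accumulate at $z^{**}$; both lemma hypotheses then force the partner points to converge to $z^{**}$ as well, contradicting local injectivity of $u$ at $z^{**}$ (the boundary embedding makes $z^{**}$ non-critical). So the connectedness of a path to the boundary replaces the unavailable global factorisation; that substitution is the missing idea in your proposal.
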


\begin{rem}
In the case of our Bishop discs, $u|_{\partial\D}$
is an embedding into the strictly pseudoconvex boundary $S^3$ of~$D^4$;
the condition on the interior points of $\D$ is then guaranteed by
the maximum principle.
\end{rem}

\begin{proof}[Proof of Lemma~\ref{lem:simple}]
Arguing by contradiction, we assume that there are two
disjoint non-empty open subsets $U,V\subset\D$ with $u(U)=u(V)$.
By the conditions of the lemma, $U$ and $V$ are disjoint from~$\partial\D$.
In the proof of \cite[Lemma~2.4.1]{mcsa04} it is shown that 
critical points of $u$ in the interior of $\D$ are isolated.
Since $u|_{\partial\D}$ is an embedding, critical points
cannot accumulate near $\partial\D$, so there are
only finitely many of them.
Choose a non-critical point $z^*\in U$ and a half-open
line segment $[z^*,z^{**})$
from $z^*$ to a point $z^{**}\in\partial\D$, disjoint
from the critical points of~$u$. Let $P\subset [z^*,z^{**})$
be the set of points that have a neighbourhood in $[z^*,z^{**})$
consisting of non-injective points for~$u$, that is, points $z$
for which there is a different point $z'\in\D$ with $u(z)=u(z')$.
By definition $P$ is open in $[z^*,z^{**})$, and $z^*\in P$,
so $P$ is non-empty.

We claim that $P$ is also closed in $[z^*,z^{**})$. Indeed,
suppose that $(z_{\nu})$ is a sequence of points in $P$
converging to some point~$z_0\in [z^*,z^{**})$. Then there are points
$w_{\nu}\neq z_{\nu}$ with $u(w_{\nu})=u(z_{\nu})$. By passing to a
subsequence we may assume that the sequence $(w_{\nu})$ converges to a
point $w_0\in\D$.

Since the point $z_0$ is not
critical for~$u$, the map $u$ is locally injective near~$z_0$.
This implies that $w_0\neq z_0$, and
without loss of generality we may assume that $w_{\nu}\neq z_0$
for all~$\nu$. Moreover, the assumptions of the lemma imply that
$w_0$ is an interior point of~$\D$.

With this information we are precisely in the situation of
\cite[Lemma~2.4.3]{mcsa04}, which tells us that there exists
a holomorphic map $\phi$, defined in a neighbourhood of~$z_0$,
such that $\phi (z_0)=w_0$ and $u=u\circ\phi$. So $z_0$
has a neighbourhood of non-injective points, which means that
$z_0\in P$, i.e.\ $P$ is closed in $[z^*,z^{**})$.

We conclude that $P=[z^*,z^{**})$. In particular, we find a sequence of
non-injective points, which we write again as $(z_{\nu})$,
accumulating at~$z^{**}$. So as before we have points
$w_{\nu}\neq z_{\nu}$ with $u(w_{\nu})=u(z_{\nu})$, with
$(w_{\nu})$ converging to some point~$w_0$. Then $u(w_0)=u(z^{**})$,
which implies $w_0=z^{**}$. But $u$ is locally injective
near~$z^{**}$, so it is impossible for the two sequences
$(z_{\nu})$ and $(w_{\nu})$ of corresponding non-injective points
to accumulate at~$z^{**}$. This contradiction proves the lemma.
\end{proof}

\begin{rem}
Instead of asking for $u|_{\partial\D}$ to be an embedding, it
suffices to require that $u|_{\partial\D}$ be injective. Then the
set of critical points is still finite, see~\cite[Theorem~3.5]{lazz00},
and the proof above goes through {\em verbatim}.
\end{rem}

\begin{prop}
\label{prop:embedded}
All Bishop discs are embedded and mutually disjoint.
\end{prop}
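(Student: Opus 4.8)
The plan is to establish that each Bishop disc is embedded, and then that distinct Bishop discs have disjoint images. Both conclusions should follow from a relative adjunction inequality combined with the positivity of intersection for $J$-holomorphic curves, which is the machinery the paper announces for Section~\ref{section:positivity}. Since Proposition~\ref{prop:simple} already tells us that every Bishop disc $u^t$ is simple, we have access to the self-intersection count: for a simple holomorphic disc with totally real boundary condition, the number of interior double points is controlled by a formula relating the Maslov index $\mu(A^t)$, the self-intersection number, and the Euler-characteristic-type boundary contribution. The key input from Proposition~\ref{prop:mu2} is that $\mu(A^t)=2$, which is exactly the borderline value forcing the double-point count to vanish.

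First I would set up the relative adjunction inequality for a simple disc $u^t$ with boundary on the totally real surface $\tilde{S}^t\setminus\{q^t_\pm\}$. The boundary $u^t|_{\partial\D}$ is an embedded circle (established in the proof of Proposition~\ref{prop:simple} via the maximum principle and the homotopical condition), and by Proposition~\ref{prop:maximum} the interior of the disc lies in $\Int D^4$, disjoint from the boundary sphere. The adjunction formula then expresses the algebraic count of interior self-intersections, plus a nonnegative contribution from any interior critical points, in terms of $\mu(A^t)$ and a normal-bundle/boundary-framing term. Plugging in $\mu(A^t)=2$ and the fact that the boundary is embedded, I expect the formula to force the self-intersection number to be zero and the disc to be immersed, hence (being simple with no double points) embedded. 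This is the step that must be argued carefully, since it depends on the precise normalisation of the adjunction inequality for discs and on positivity of intersection to guarantee that each interior double point contributes positively.

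For the disjointness statement, I would take two distinct Bishop discs $u^{t}$ and $u^{t'}$ and apply positivity of intersection to their images. If $t\neq t'$, the boundaries lie on the distinct spheres $\tilde{S}^{t}$ and $\tilde{S}^{t'}$, which meet $S^3$ in disjoint level sets, so the boundary circles are disjoint and any intersection must be interior. For two distinct simple $J$-holomorphic discs, positivity of intersection says the algebraic intersection number is nonnegative and vanishes only if the images are disjoint; one then computes the homological intersection number of the two classes and shows it is zero. Here I would use that the relevant homology classes $A^t$ and $A^{t'}$ have intersection product zero in $D^4$ (the relevant relative intersection pairing vanishes, reflecting that the standard discs $D^t_s$ and $D^{t'}_{s'}$ are literally disjoint slices of $D^4$), which transfers to the $\varphi$-adapted discs because $\varphi$ is a diffeomorphism preserving the topological set-up. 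For discs at the same level, $t=t'$ but differing in their marked-point or parametrisation data, the same positivity argument applies once one checks the boundaries remain disjoint, which follows from the embeddedness of each boundary and the foliation property of the characteristic foliation.

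The main obstacle will be the adjunction inequality itself: unlike the closed case, the relative adjunction formula for holomorphic discs with totally real boundary requires a careful choice of framing along $\partial\D$ and a correct bookkeeping of the boundary Maslov contribution, and getting the exact constant right is what makes $\mu(A^t)=2$ yield embeddedness rather than merely bounding the number of double points. Since the paper explicitly defers the positivity-of-intersection and topological-intersection-theory results to Sections~\ref{section:positivity} and~\ref{section:intersection}, I expect the honest proof of Proposition~\ref{prop:embedded} to be postponed and to cite those sections; my plan is to reduce everything to (i) the relative adjunction inequality and (ii) positivity of intersection, both applied to the \emph{simple} discs furnished by Proposition~\ref{prop:simple}, with the Maslov-index normalisation from Proposition~\ref{prop:mu2} doing the decisive work.
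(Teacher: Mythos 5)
Your overall strategy (relative adjunction inequality plus positivity of intersections, applied to the simple discs furnished by Proposition~\ref{prop:simple}) is indeed the paper's strategy, but two of your steps have genuine gaps. First, the Maslov index does not do ``the decisive work'' for embeddedness. The embedding defect is $D(A^t)=A^t\bullet A^t-\mu(A^t)+2$, so $\mu(A^t)=2$ only reduces the problem to showing $A^t\bullet A^t=0$; it is not a borderline value that forces the double-point count to vanish on its own. The input that actually carries the argument --- and which your proposal never supplies for the embeddedness step --- is the paper's observation that, because $\varphi=\id$ on $\UU^{\delta}$, the standard discs $u^t_s$ with $1-\delta-t^2<s^2<1-t^2$ are themselves $t$-level Bishop discs for $\varphi$: they are embedded, mutually disjoint representatives of the very class $A^t$. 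Since the embedding defect depends only on the relative homotopy class, this gives $D(A^t)=0$ (equivalently $A^t\bullet A^t=0$) immediately, and Theorem~\ref{thm:adjunction} then makes every Bishop disc in the class embedded. You do gesture at the disjoint standard slices when computing the intersection pairing for distinct levels, but only there; made explicit, that observation is what settles embeddedness.

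Second, your treatment of two distinct discs at the same level fails at the asserted boundary disjointness. Each boundary is an embedded circle transverse to the characteristic foliation and winding once around the annulus $\tilde{S}^t\setminus\{q^t_{\pm}\}$, hence a graph over the leaf space $S^1$; but two such graphs can certainly cross, so embeddedness of each boundary plus the foliation property does not make the boundaries disjoint, and you cannot reduce to interior positivity of intersections. This is precisely why Section~\ref{section:positivity} develops intersection multiplicities at \emph{boundary} points (via Schwarz reflection): Theorem~\ref{thm:positivity} applies to boundary intersections as well, so from $A^t\bullet A^t=0$ one concludes that two distinct $t$-level Bishop discs are either disjoint or have coinciding images. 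Your proposal also leaves the second alternative unaddressed: it is the prescribed marked points $u(i^k)\in\tilde{\ell}^t_k$ that force two Bishop discs with the same image to be identical, completing the paper's proof.
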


\begin{proof}
Since all Bishop discs are simple, the results from
Section~\ref{section:positivity} below on the positivity of intersections
apply. In that section we define a so-called
embedding defect $D$ for holomorphic discs.
This embedding defect depends only
on the relative homotopy class $A^t=[u^t]\in\pi_2(D^4,\tilde{S}^t
\setminus\{ q^t_{\pm}\} )$, and it equals zero if and only
if $u^t$ is embedded (Theorem~\ref{thm:adjunction}).

For each $t\in (-1,1)$, the discs $u^t_s$ with 
$s$ in the range given by $1-\delta -t^2<s^2<1-t^2$
belong to our moduli space $\MM_{\varphi}(t)$ of $t$-level Bishop
discs. In particular, $\MM_{\varphi}(t)$ contains at
least two disjoint embedded holomorphic discs. It follows that
$D(A^t)=0$ and that all Bishop discs are embedded.

Given two Bishop discs of distinct levels, their boundary curves
are disjoint.  Then the intersection number of the two discs
equals the linking number of their boundary curves
in~$S^3$, which is zero. By positivity of intersections
(at interior points), see~\cite[Theorem~7.1]{miwh95},
it follows that the two discs must be disjoint.

It remains to show that the same holds true for Bishop discs
at one and the same level~$t$. From the defining equation
of the embedding defect in Section~\ref{section:positivity},
and with $\mu (A^t)=2$,
it follows that the self-intersection number $A^t\bullet A^t$,
as defined in Section~\ref{section:intersection}, equals zero for every~$t$.
(Alternatively, this is a consequence of the existence of two disjoint
$t$-level Bishop discs.) With positivity of intersections
(Theorem~\ref{thm:positivity})
we infer that any two $t$-level Bishop discs are either disjoint,
or their images coincide. Since we prescribed the
images of three marked points on each Bishop disc,
two Bishop discs with the same image are actually identical.
\end{proof}

\begin{cor}
\label{cor:standard}
If the boundary of a $t$-level Bishop disc $u$ hits the set
$\overline{\UU}^{\delta}$, then $u=u_s^t$ for some $s$
with $1-\delta-t^2\leq s^2<1-t^2$.
\end{cor}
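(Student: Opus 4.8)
The plan is to use the level-wise rigidity of Bishop discs from Proposition~\ref{prop:embedded} together with the fact that $\varphi=\id$ on $\overline{\UU}^{\delta}\subset\UU$. The point is that a boundary point of $u$ lying in $\overline{\UU}^{\delta}$ is forced to sit on the boundary of one of the standard discs $u^t_{s}$, after which the uniqueness of $t$-level Bishop discs identifies $u$ with that standard disc.

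First I would choose $z_0\in\partial\D$ with $p:=u(z_0)\in\overline{\UU}^{\delta}$ and check that $p$ actually lies on the standard sphere $S^t$, not just on $\tilde{S}^t=\varphi(S^t)$. Writing $p=\varphi(q)$ with $q\in S^t$ and using that $\varphi$ fixes $\UU\supset\overline{\UU}^{\delta}$ pointwise --- so that $\varphi$ carries $\UU$ to itself --- one gets $q=\varphi^{-1}(p)\in\UU$ and hence $p=\varphi(q)=q\in S^t$. Thus $p\in S^t\cap\overline{\UU}^{\delta}$. Since the circles $\partial D^t_{s}$ foliate $S^t\setminus\{q^t_{\pm}\}$ and $p$ is not a pole (boundaries of Bishop discs land in $\tilde{S}^t\setminus\{q^t_{\pm}\}$ by definition), $p$ lies on exactly one such circle $\partial D^t_{s_0}$, and reading off the constraints $p\in\overline{\UU}^{\delta}$ and $p\neq q^t_{\pm}$ gives $1-\delta-t^2\leq s_0^2<1-t^2$.

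Next I would observe that the standard disc $u^t_{s_0}$ is itself a $t$-level Bishop disc for~$\varphi$: its boundary lies in $\overline{\UU}^{\delta}$, where $\varphi=\id$, so the boundary condition and the marked-point conditions $u^t_{s_0}(i^k)\in\tilde{\ell}^t_k$ hold automatically, and the relative homotopy class equals $A^t$ by the computation in the proof of Proposition~\ref{prop:mu2}. (This is exactly the family of standard discs already used in the proof of Proposition~\ref{prop:embedded}.) Now $u$ and $u^t_{s_0}$ are two $t$-level Bishop discs whose images contain the common point~$p$, so by Proposition~\ref{prop:embedded} their images cannot be disjoint; hence the images coincide, and since both discs carry the prescribed marked points they are in fact equal, $u=u^t_{s_0}$.

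The one step that requires genuine care is the passage from $\tilde{S}^t$ to $S^t$: it is essential here that $\varphi$ not only fixes $\overline{\UU}^{\delta}$ pointwise but also preserves the surrounding neighbourhood $\UU$ setwise, for this is what makes $\tilde{S}^t$ and $S^t$ agree throughout $\overline{\UU}^{\delta}$ and lets us locate $p$ on a standard circle. Everything after that is a formal appeal to the rigidity statement of Proposition~\ref{prop:embedded}, and the verification that $u^t_{s_0}$ is a Bishop disc is routine, being inherited from the standing assumption that $\varphi$ is the identity near~$K$.
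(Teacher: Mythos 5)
Your proposal is correct and follows essentially the same route as the paper: the paper's (much terser) proof likewise notes that $\varphi=\id$ on $\UU\supset\overline{\UU}^{\delta}$, that $\overline{\UU}^{\delta}$ is foliated by the boundary circles of the standard discs $u^t_s$, and then invokes Proposition~\ref{prop:embedded}. Your write-up merely makes explicit the details the paper leaves implicit (locating $p$ on $S^t$ rather than just $\tilde{S}^t$, checking that $u^t_{s_0}$ is itself a Bishop disc, and using the marked points to upgrade coincidence of images to equality of maps).
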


\begin{proof}
The contactomorphism $\varphi$ is the identity on $\UU$,
which contains $\overline{\UU}^{\delta}$. That latter set
is foliated by the boundary circles of standard discs $u^t_s$.
Now invoke the preceding proposition.
\end{proof}

It is therefore opportune to restrict attention to the
{\bf truncated moduli space}
\begin{eqnarray*}
\lefteqn{\WW_{\varphi}^{\delta}:= \bigl\{ u^t \co
t\in [-\sqrt{1-\delta},\sqrt{1-\delta}],\bigr. }\\
& &  \bigl. u^t\;
\text{is a}\; t\text{-level Bishop disc for} \; \varphi \;
\text{such that} \; u^t(\partial\D)\subset S^3\setminus
\UU^{\delta} \bigr\} .
\end{eqnarray*}
In Sections \ref{section:compactness} and \ref{section:transversality}
we shall prove that this moduli space is a compact manifold with boundary.
Working with this truncated moduli space allows us to
circumvent all subtleties along the singular set~$K$.

The following uniform energy estimate will be one ingredient in
that compactness argument.

\begin{prop}
\label{prop:energy}
The symplectic energy of all Bishop discs for $\varphi$ is uniformly bounded
by a constant depending only on~$\varphi$.
\end{prop}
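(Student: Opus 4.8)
The plan is to reduce the energy to a boundary integral and then control it through the geometry of the fixed $2$-spheres $\tilde{S}^t$. Since $\omega=d\lambda$ and the boundary of a $t$-level Bishop disc $u^t$ lies on $\tilde{S}^t\subset S^3$, where $\lambda|_{TS^3}=\alpha$, Stokes's theorem gives
\[ E(u^t)=\int_{\D}(u^t)^*\omega=\int_{\partial\D}(u^t|_{\partial\D})^*\alpha. \]
Thus it suffices to bound the right-hand side uniformly in $t$ and over all discs at a given level, by a constant that depends only on~$\varphi$.

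The next step is to exploit that the boundary curve lives on a compact surface. By the maximum principle (Proposition~\ref{prop:maximum}), and as observed in the proof of Proposition~\ref{prop:simple}, the restriction $u^t|_{\partial\D}$ is an embedded circle $\gamma$ in $\tilde{S}^t\setminus\{q^t_{\pm}\}$. Being an embedded circle on the $2$-sphere $\tilde{S}^t$, it separates $\tilde{S}^t$ into two closed disc regions $R_+$ and $R_-$, with $\gamma$ oriented as the boundary of, say, $R_+$. Since $\alpha$ is defined on all of $S^3$ and $d\alpha=\omega|_{TS^3}$, a second application of Stokes's theorem on $R_+\subset\tilde{S}^t$ yields $E(u^t)=\int_{\gamma}\alpha=\int_{R_+}\omega$. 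Because $\tilde{S}^t=\varphi(S^t)$ is a closed surface and $\omega=d\lambda$ is exact, $\int_{\tilde{S}^t}\omega=0$, so $\int_{R_-}\omega=-E(u^t)$, and the triangle inequality for the two regions gives
\[ 0\leq E(u^t)=\int_{R_+}\omega\leq\frac{1}{2}\int_{\tilde{S}^t}|\omega|, \]
where $|\omega|$ denotes the total variation of the $2$-form $\omega|_{\tilde{S}^t}$. The decisive gain is that the right-hand side no longer refers to the individual disc, only to the level~$t$.

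It then remains to bound $c(t):=\int_{\tilde{S}^t}|\omega|$ uniformly in $t\in(-1,1)$. As $\varphi$ is a fixed diffeomorphism, the spheres $\tilde{S}^t$ form a smooth family, so $t\mapsto c(t)$ is continuous on $(-1,1)$; and since $\varphi$ is the identity near $K$, the spheres $\tilde{S}^t$ collapse to the poles $(0,0,0,\pm1)$ as $t\to\pm1$, forcing $c(t)\to 0$. Hence $c$ extends continuously to the compact interval $[-1,1]$ and is therefore bounded, by a constant depending only on~$\varphi$, which serves as the asserted energy bound. The only point requiring care is this endpoint analysis, i.e.\ confirming that the surface integral $c(t)$ stays bounded as the spheres degenerate near the poles; but no genuine estimate is needed, since everything is smooth and the degeneration there is exactly the standard one. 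As a sanity check, for $\varphi=\id$ this reproduces the bound $E(u^t_s)\leq\pi(1-t^2)$ computed in Section~\ref{section:idea}.
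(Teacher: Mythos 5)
Your argument is correct, but it follows a genuinely different route from the paper's. The paper transports the boundary integral to the \emph{standard} sphere: after citing Proposition~\ref{prop:embedded} to treat $u(\partial\D)$ as an embedded curve, it writes $E(u)=\int_{u(\partial\D)}\alpha=\int_{\varphi^{-1}\circ u(\partial\D)}\varphi^*\alpha$, uses the contactomorphism property $\varphi^*\alpha=f\alpha$ with $f\leq C(\varphi)$ together with positive transversality of the boundary curve to $\xi$ (so the pointwise inequality integrates in the right direction), caps off $\varphi^{-1}\circ u(\partial\D)$ by a disc $D^t\subset S^t$, and bounds $\int_{D^t}\omega$ via the explicit identity $x_2\sigma^t=\sqrt{1-t^2}\,\omega$ on $TS^t$, arriving at the explicit constant $4\pi C$. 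You instead cap off the boundary curve inside the \emph{image} sphere $\tilde{S}^t$, use exactness of $\omega$ and a total-variation bound to get $2E(u^t)\leq\int_{\tilde{S}^t}|\omega|=:c(t)$, and conclude by continuity of $c$ on $(-1,1)$ plus the endpoint analysis, where the key point (which you state a bit breezily but which is sound) is that $S^t\subset\UU^{\delta}\subset\UU$ for $t^2>1-\delta$, hence $\tilde{S}^t=S^t$ there and $c(t)\to0$ as $t\to\pm1$. Each approach buys something. Yours never uses the contact condition $\varphi^*\alpha=f\alpha$ inside this proof --- only that $\varphi$ is a diffeomorphism fixing a neighbourhood of $K$ and that the boundary curve is an embedded circle, which is available from the proof of Proposition~\ref{prop:simple}; in particular you avoid the appeal to Proposition~\ref{prop:embedded}, and thereby the logical dependence of the energy estimate on the intersection-theoretic machinery of Sections~\ref{section:intersection} and~\ref{section:positivity}. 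The paper's computation, on the other hand, yields an explicit uniform bound $4\pi C(\varphi)$ with no compactness-in-$t$ argument, since the comparison of $\omega$ with the round area form is done concretely once and for all.
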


\begin{proof}
By Proposition~\ref{prop:embedded} any Bishop disc $u$ is an
embedding. So with Stokes's theorem and the transformation formula,
the energy $E(u)$ can be computed as
\[ E(u)= \int_{\D}u^*\omega= \int_{u(\partial\D)}\alpha
=\int_{\varphi^{-1}\circ u(\partial\D)}\varphi^*\alpha .\]
The contactomorphism $\varphi$ pulls back the contact form $\alpha$
to $f\alpha$, where $f\co S^3\ra\R^+$ is a smooth function
bounded above by some constant~$C=C(\varphi )$.
By the maximum principle,
$u|_{\partial\D}$ is positively transverse to~$\xi =\ker\alpha$,
and the same holds for $\varphi^{-1}\circ u|_{\partial\D}$.
It follows that
\[ E(u)\leq C\int_{\varphi^{-1}\circ u(\partial\D)}\alpha .\]

The $2$-sphere $S^t$, which contains the embedded circle
$\varphi^{-1}\circ u(\partial\D)$, is naturally oriented by the
area form
\[ \sigma^t:= \frac{1}{\sqrt{1-t^2}} \bigl( x_1\, dy_1\wedge dx_2+
 y_1\, dx_2\wedge dx_1+ x_2\, dx_1\wedge dy_1 \bigr) ;\]
the total area of $S^t$ equals $4\pi (1-t^2)$.
Let $D^t$ be the $2$-disc in $S^t$ (with the induced orientation)
whose oriented boundary equals $\varphi^{-1}\circ u(\partial\D)$. Then
\[ E(u)\leq C\int_{\partial D^t}\alpha= C\int_{D^t}\omega .\]

On $TS^t$ we have $x_1\, dx_1+y_1\, dy_1+x_2\, dx_2=0$. Using this,
one finds that
\[ x_2\sigma^t=\sqrt{1-t^2}\, dx_1\wedge dy_1=\sqrt{1-t^2}\, \omega\;\;
\mbox{\rm on}\;\; TS^t.\]
Hence, with $x_2\leq\sqrt{1-t^2}$ we conclude
\[ E(u)\leq \frac{C}{\sqrt{1-t^2}}\int_{D^t}x_2\sigma^t\leq
C\int_{S^t}\sigma^t= 4\pi C(1-t^2)\leq 4\pi C.\]
This is the desired uniform estimate.
\end{proof}

\begin{rem}
The symplectic energy $E(u)$ of a holomorphic disc
equals its {\bf Dirichlet energy}
$(1/2)\int_{\D}|\nabla u|^2$
see~\cite[Section~2.2]{mcsa04}. Since the symplectic energy is,
by its definition, invariant under reparametrisations,
it follows that the Dirichlet energy is invariant under
{\em conformal} reparametrisations.
\end{rem}
\section{From the filling to the extension}
\label{section:extension}
By taking the results on compactness and transversality from Sections
\ref{section:compactness} and \ref{section:transversality} for granted, we
can now determine the truncated moduli space.

Recall from Section~\ref{subsection:Q}
that $Q^{\delta}$ is a closed $2$-disc in $S^3$ defined
by $Q^{\delta}=\{ u_s^t(1)\co (s,t)\in\D_{1-\delta}\}$, where
$\D_{1-\delta}$ denotes the closed disc
$\{ s^2+t^2\leq 1-\delta\}$. Hence,
if $u$ is a Bishop disc for~$\varphi$, our condition on
the marked points implies $u(1)\in
\varphi (Q^{\delta})$.

\begin{prop}
\label{prop:ev}
The evaluation map
\[ \begin{array}{rccc}
\ev_1\co & \WW_{\varphi}^{\delta} & \lra        & 
                               \varphi ({Q}^{\delta})\\
         & u                      & \longmapsto &
                               u(1)
\end{array} \]
is a diffeomorphism.
\end{prop}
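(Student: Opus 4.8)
The plan is to show that $\ev_1$ is a bijective local diffeomorphism between compact $2$-manifolds with boundary. That the domain $\WW_\varphi^\delta$ is such a manifold (of dimension~$2$) is the content of the compactness and transversality results of Sections~\ref{section:compactness} and~\ref{section:transversality}, which I take for granted here; the target $\varphi(Q^\delta)$ is the diffeomorphic image of the closed $2$-disc $Q^\delta$. The map $\ev_1$ is smooth because evaluation at the fixed marked point $1\in\partial\D$ is smooth on a moduli space of holomorphic discs, and it takes values in $\varphi(Q^\delta)$ by the condition $u(1)\in\tilde\ell^t_0$ on the marked points together with the truncation condition. By Corollary~\ref{cor:standard}, the boundary $\partial\WW_\varphi^\delta$ consists precisely of the standard discs $u^t_s$ with $s^2+t^2=1-\delta$, on which $\varphi=\id$; hence $\ev_1$ restricts on $\partial\WW_\varphi^\delta$ to the explicit standard parametrisation $(s,t)\mapsto u^t_s(1)$ of $\partial Q^\delta=\partial\varphi(Q^\delta)$, which is a diffeomorphism of circles.

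First I would establish injectivity. The level of a Bishop disc is recovered from its value at $1$ by the smooth function $u\mapsto H(\varphi^{-1}(u(1)))$, since $u(1)\in\tilde S^t=\varphi(S^t)$ and $S^t=H^{-1}(t)$; as the spheres $\tilde S^t$ are pairwise disjoint, $u(1)=u'(1)$ forces $u$ and $u'$ to have the same level~$t$. At a common level, Proposition~\ref{prop:embedded} tells us that two $t$-level Bishop discs are either disjoint or identical, and two discs sharing the boundary point $u(1)$ cannot be disjoint; hence $u=u'$.

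The crux of the proof is to show that $\ev_1$ is a local diffeomorphism, i.e.\ that $d\ev_1$ is injective at every $u$ (the two manifolds both being two-dimensional). Geometrically, the Bishop discs are pairwise disjoint, and their boundary circles, each positively transverse to the relevant characteristic foliation by the maximum principle (Proposition~\ref{prop:maximum}), sweep out a region of $S^3$ to which $\varphi(Q^\delta)$ is a transversal meeting every boundary circle exactly once; $\ev_1$ is then the associated intersection map. I would verify the infinitesimal version as follows. Given $\zeta\in T_u\WW_\varphi^\delta$ with $\zeta(1)=0$, differentiating $u\mapsto H(\varphi^{-1}(u(1)))$ shows that the infinitesimal change of level vanishes, so $\zeta$ is tangent to the fibre $\MM_\varphi(t)$; within that fibre the boundary circles foliate an annular region of $\tilde S^t$ transverse to the leaf $\tilde\ell^t_0$, and $\zeta(1)=0$ then forces $\zeta=0$. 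The point where this step genuinely depends on the analysis of the later sections is the regularity of the moduli space: that $\WW_\varphi^\delta$ is a smooth manifold cut out transversally, so that a tangent vector $\zeta$ is a solution of the linearised Cauchy--Riemann equation with the prescribed totally real boundary and marked-point conditions, and that the evaluation is nondegenerate along the level fibres. This is the main obstacle, and I expect it to rest on an automatic-transversality argument available for these embedded Maslov-index-$2$ discs in the $4$-dimensional setting.

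Finally I would deduce surjectivity, and hence that $\ev_1$ is a diffeomorphism, by an open--closed argument. Since $\ev_1$ is a local diffeomorphism sending $\partial\WW_\varphi^\delta$ onto $\partial\varphi(Q^\delta)$, it sends the interior into the interior, where it is an injective local diffeomorphism and hence an open map; thus $\ev_1(\Int\WW_\varphi^\delta)$ is open in $\Int\varphi(Q^\delta)$. It is also closed there: if $\ev_1(u_\nu)\to p\in\Int\varphi(Q^\delta)$, then by compactness of $\WW_\varphi^\delta$ a subsequence of $(u_\nu)$ converges to some $u$ with $\ev_1(u)=p$, and $u$ lies in the interior because boundary discs map to $\partial\varphi(Q^\delta)$. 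As $\Int\varphi(Q^\delta)$ is connected, $\ev_1$ maps the interior onto the interior; together with the diffeomorphism on the boundary this shows that $\ev_1$ is a bijective local diffeomorphism, hence a diffeomorphism.
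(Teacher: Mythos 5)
Your proposal is correct and is essentially the paper's own argument: injectivity via Proposition~\ref{prop:embedded}, full rank of $T_u\ev_1$ from the transversality analysis, and surjectivity by combining compactness of $\WW_{\varphi}^{\delta}$ with openness of the image in the connected target $\varphi(Q^{\delta})$. The nondegeneracy you flag as the main obstacle is exactly what Proposition~\ref{prop:sigma-tau} supplies: $T_u\WW_{\varphi}$ is spanned by the explicit kernel elements $\sigma,\tau$, whose values at every $z\in\D$ --- in particular at $z=1$ --- span a complement of $du(T_z\D)$, so $T_u\ev_1(\eta)=\eta(1)$ is injective on the $2$-dimensional tangent space; note that this same proposition is also what makes the level function a submersion (via $\tau$), a fact your fibrewise reduction tacitly uses when identifying vectors with vanishing infinitesimal level change as tangent vectors to $\MM_{\varphi}(t)$, and what rules out the circularity lurking in the assertion that the boundary circles of a level fibre ``foliate'' an annulus.
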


This proposition will be proved at the end of
Section~\ref{section:transversality},
once we have established the relevant compactness and transversality results.

We now want to define a notion of holomorphic
filling of $S^3$ (with singular set~$K$)
with respect to a contactomorphism
$\varphi$ of $(S^3,\xi )$ that subsumes, for the identity map,
the standard filling $F_{\st}$.
In the sequel, the contactomorphism $\varphi$ is always taken as a
given and will be omitted from the notation.

\begin{defn}
A {\bf holomorphic filling} of $S^3$ is a diffeomorphism
\[ F\co \bigl(\D\times \Int\D,\partial\D\times \Int\D\bigr) \lra
(D^4\setminus K,S^3\setminus K)\]
with the following properties:
\begin{itemize}
\item[(1)] $\DB F(\, .\, ,s,t)=0$ for all $(s,t)\in \Int\D$;
\item[(2)] $F$ extends continuously to a map (still denoted~$F$)
defined on $\D\times\D$ such that $F(\D\times\partial\D)=K$;
\item[(3)] for all $(z,s,t)\in\partial\D\times \Int\D$ we have
$F(z,s,t)\in\tilde{S}^t\setminus\{q^t_{\pm}\}$,
and as $s$ tends to $\pm\sqrt{1-t^2}$, the limit of $F(z,s,t)$
equals $q^t_{\pm}$ for each $z\in\partial\D$.
\end{itemize}
\end{defn}

Observe that for $t\in (-1,1)$, the
punctured $2$-sphere $\tilde{S}^t\setminus\{q^t_{\pm}\}$
will be foliated by the boundary circles $F(\partial\D,s,t)$
of the holomorphic discs $F(\D,s,t)$, where $|s|<\sqrt{1-t^2}$.

We now want to construct such a holomorphic filling with
the help of the moduli space $\WW_{\varphi}^{\delta}$.
The map described in the next proposition
is defined thanks to Proposition~\ref{prop:ev}.

\begin{prop}
\label{prop:F-delta}
The map
\[ 
F^{\delta}\co (\D\times\D_{1-\delta},\partial\D\times\D_{1-\delta})
\lra (D^4,S^3) \]
defined by
\[ (z,s,t) \longmapsto
\bigl(\ev_1^{-1}\circ\;\varphi\circ u^t_s(1)\bigr)(z)\]
is an embedding.
\end{prop}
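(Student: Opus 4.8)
\noindent\emph{Proof proposal.} The plan is to show that $F^{\delta}$ is a smooth injective immersion and then to invoke compactness of the domain to upgrade this to an embedding. It is convenient to factor the map as $F^{\delta}=\widehat{F}\circ(\id_{\D}\times\Phi)$, where $\Phi\co\D_{1-\delta}\to\WW_{\varphi}^{\delta}$ sends $(s,t)$ to $\ev_1^{-1}(\varphi\circ u^t_s(1))$, and where $\widehat{F}\co\D\times\WW_{\varphi}^{\delta}\to D^4$ is the universal evaluation $(z,u)\mapsto u(z)$. The map $\Phi$ is a diffeomorphism, being the composite of the parametrisation $(s,t)\mapsto u^t_s(1)$ of $Q^{\delta}$ (a restriction of the diffeomorphism $F_{\st}$), the diffeomorphism $\varphi$, and the inverse of the diffeomorphism $\ev_1$ from Proposition~\ref{prop:ev}. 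Hence it suffices to prove that $\widehat{F}$ is an embedding. Smoothness of $\widehat{F}$ is part of the smooth structure on the moduli space and its evaluation, which I would take from the transversality results of Section~\ref{section:transversality}, cf.~\cite{mcsa04}.

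First I would prove injectivity. Suppose $\widehat{F}(z,u)=\widehat{F}(z',u')$, so that $u(z)=u'(z')$ is a common point of the two Bishop discs $u,u'$. If $u\neq u'$, then by Proposition~\ref{prop:embedded} the discs $u$ and $u'$ have disjoint images, which is absurd; hence $u=u'$. Since each Bishop disc is embedded, $u$ is injective, so $z=z'$, and $\widehat{F}$ is injective.

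Next comes the immersion property, which I expect to be the main obstacle. As the domain $\D\times\WW_{\varphi}^{\delta}$ and the target $D^4$ are both $4$-dimensional, it suffices to show that $d\widehat{F}$ is injective at every point $(z,u)$, and this reduces to two observations. The $\D$-derivative is the differential of the embedded disc $u$ at $z$, hence injective with image the tangent plane $T_p(u(\D))$ to the disc at $p=u(z)$, a complex line in $T_pD^4$. It therefore remains to show that the linearised evaluation $T_u\WW_{\varphi}^{\delta}\to N_p:=T_pD^4/T_p(u(\D))$ into the normal space is an isomorphism. A nonzero element of its kernel would be a nonzero infinitesimal deformation $\xi\in T_u\WW_{\varphi}^{\delta}$ with $\xi(z)$ tangent to the disc, i.e.\ with vanishing normal projection $\xi^N(z)=0$. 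Now $\xi^N$ is a nontrivial solution of the linearised normal Cauchy--Riemann equation, so by the similarity principle its zeros are isolated and each counts positively; their total number is the normal Maslov index $\mu^N=\mu(A^t)-2=0$, using $\mu(A^t)=2$ from Proposition~\ref{prop:mu2} and the relative adjunction relation underlying $A^t\bullet A^t=0$ from Proposition~\ref{prop:embedded} and Section~\ref{section:positivity}. Equivalently, a zero of $\xi^N$ would force a nearby deformed Bishop disc to meet $u$, contradicting the disjointness of distinct Bishop discs. Hence $\xi^N$ is nowhere zero, the linearised evaluation into $N_p$ is injective and thus an isomorphism, and $d\widehat{F}$ is nonsingular.

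Finally I would assemble the pieces. A smooth injective immersion from the compact manifold-with-corners $\D\times\WW_{\varphi}^{\delta}$ into the Hausdorff manifold $D^4$ is a homeomorphism onto its image, hence an embedding; composing with the diffeomorphism $\id_{\D}\times\Phi$ gives the same for $F^{\delta}$. That $F^{\delta}$ respects the pair structure $(\D\times\D_{1-\delta},\partial\D\times\D_{1-\delta})\to(D^4,S^3)$ follows from the maximum principle (Proposition~\ref{prop:maximum}): boundary points $z\in\partial\D$ are sent into $\tilde{S}^t\subset S^3$ by the boundary condition on Bishop discs, while interior points are sent into $\Int D^4$. The crux of the whole argument is the nondegeneracy of the transverse variation in the immersion step, which rests on positivity of intersection and the vanishing self-intersection number.
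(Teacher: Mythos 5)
Your proof has the same skeleton as the paper's: since the bidisc is compact, it suffices to show that $F^{\delta}$ is an injective immersion; injectivity follows from Proposition~\ref{prop:embedded} (Bishop discs are embedded and mutually disjoint), exactly as in the paper. The paper then settles the immersion step in one sentence by quoting Proposition~\ref{prop:sigma-tau}~(3): the generators $\sigma,\tau$ of $T_u\WW_{\varphi}$ span the complement of $du(T_z\D)$ in $T_{u(z)}\R^4$ for \emph{every} $z\in\D$. Writing blind, you instead try to re-prove this nondegeneracy on the spot, and it is there that your argument has a genuine gap.

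The gap: you count the zeros of the normal part $\xi^N$ of a putative kernel element by the normal Maslov index $\mu^N=\mu(A^t)-2=0$. That count applies only to sections of the normal bundle \emph{pair}, i.e.\ to deformations whose boundary values lie in $T\tilde{S}^t$ --- deformations that keep the level $t$ fixed. But $T_u\WW_{\varphi}^{\delta}$ is two-dimensional precisely because it contains the level-shifting direction $\tau$, whose normal part along $\partial\D$ has a constant \emph{nonzero} component in the direction of $\nabla\widetilde{H}/\|\nabla\widetilde{H}\|^2$: in the paper's notation its boundary values satisfy $\im\eta^{\perp}=\mathrm{const}\neq 0$, so they lie in an affine translate of the totally real line, not in the line itself. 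For such $\xi$ --- and for every combination $\lambda_0\sigma+\lambda_1\tau$ with $\lambda_1\neq 0$ --- the Maslov-index zero count simply does not apply. This is exactly the case that forces the separate argument in step~(v) of the proof of Proposition~\ref{prop:sigma-tau}: after writing the normal part as $e^gf$ via the Carleman similarity principle, one uses the constant nonzero imaginary part of the boundary values together with the mean value property (equivalently, the argument principle: $f(\partial\D)$ lies in an open half-plane, so $f$ has winding number zero about the origin and hence no zeros) to exclude zeros. Your alternative one-line justification --- that a zero of $\xi^N$ would force a nearby Bishop disc to meet $u$ --- is a heuristic, not a proof; linearising positivity of intersections in this way is itself a nontrivial step. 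A second, smaller omission: you assert that $\xi^N$ is a \emph{nontrivial} solution, but a nonzero $\xi$ could a priori have $\xi^N\equiv 0$, i.e.\ be everywhere tangent to the disc; such $\xi$ lie in $du(\aut(\D))$, and one needs the three marked-point conditions to kill them (an infinitesimal M\"obius transformation vanishing at $1,i,-1$ is zero). Both points are precisely what Proposition~\ref{prop:sigma-tau} packages, which is why the paper can afford its three-line proof. (Incidentally, your invocation of $A^t\bullet A^t=0$ in computing $\mu^N$ is superfluous: $\mu^N=\mu(A^t)-2$ follows from the splitting of the bundle pair alone.)
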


Again, the proof of this proposition relies on compactness
and transversality statements and will be given at the end of
Section~\ref{section:transversality}.

For $(s,t)$ in a neighbourhood of $\partial\D_{1-\delta}
\subset\D_{1-\delta}$
we have $F^{\delta}(z,s,t)=u^t_s(z)=F_{\st}(z,s,t)$.
Therefore we obtain a holomorphic filling by setting
\[ F=
\begin{cases}
F^{\delta} & \quad\mbox{\rm on}\quad \D\times\D_{1-\delta},\\
F_{\st}    & \quad\mbox{\rm on}\quad \D\times\{1-\delta\leq s^2+t^2\leq1\}.
\end{cases}
\]
By construction, the maps $F_{\st}$ and $F$ restrict to
diffeomorphisms
\[ f,f_{\st}\co (S^1\times\D_{1-\delta},
S^1\times\partial\D_{1-\delta}) \lra (S^3\setminus\UU^{\delta},
\partial\UU^{\delta}). \]
Observe that
\[ f\circ f_{\st}^{-1}\co S^3\setminus\UU^{\delta}
\lra S^3\setminus\UU^{\delta}\]
is a diffeomorphism equal to the identity near the boundary
of $S^3\setminus\UU^{\delta}$. By slight abuse of notation, we
regard $f\circ f_{\st}^{-1}$ as a diffeomorphism of~$S^3$,
equal to the identity in a neighbourhood of $\overline{\UU}^{\delta}$.

\begin{lem}
\label{lem:isotopy}
The two diffeomorphisms $f\circ f_{\st}^{-1}$ and $\varphi$
of $S^3$ are isotopic relative to a neighbourhood
of~$\overline{\UU}^{\delta}$.
\end{lem}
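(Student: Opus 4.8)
The plan is to set $h:=\varphi^{-1}\circ(f\circ f_{\st}^{-1})$ and to prove that $h$ is isotopic to the identity relative to a neighbourhood of $\overline{\UU}^{\delta}$; composing such an isotopy with $\varphi$ on the left then produces the required isotopy between $f\circ f_{\st}^{-1}$ and $\varphi$. The point of passing to $h$ is that, while $f\circ f_{\st}^{-1}$ sends $S^t$ to $\tilde{S}^t$, the map $h$ is \emph{level-preserving}: for a point $u^t_s(z)\in S^t$ lying outside $\UU^{\delta}$ one computes $h\bigl(u^t_s(z)\bigr)=\varphi^{-1}\bigl(F^{\delta}(z,s,t)\bigr)$, and since $F^{\delta}(z,s,t)\in\tilde{S}^t$ while $\varphi(S^t)=\tilde{S}^t$, the value lies in $S^t$; on $\overline{\UU}^{\delta}$ the map $h$ is the identity. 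Thus $h(S^t)=S^t$ for every $t$, and I may seek a level-preserving isotopy, working on each sphere $S^t$ and depending smoothly on~$t$.

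I would next record the structure of $h$ on a fixed sphere. By the maximum principle (Proposition~\ref{prop:maximum}) the boundary $F^{\delta}(\partial\D,s,t)$ of each Bishop disc is positively transverse to $\tilde{S}^t_{\xi}=\varphi(S^t_{\xi})$, so $h$ carries the standard circle foliation $\mathfrak{C}_t=\{\partial D^t_s\}_s$ of $S^t\setminus\{q^t_{\pm}\}$ to a foliation $h(\mathfrak{C}_t)$ by circles positively transverse to the characteristic foliation $S^t_{\xi}$; near $\UU^{\delta}$, where $h=\id$, the two foliations coincide. Moreover the marked-point condition $F^{\delta}(i^k,s,t)\in\tilde{\ell}^t_k=\varphi(\ell^t_k)$ gives $h(\ell^t_k)\subset\ell^t_k$, so $h$ preserves each of the three marked leaves of $S^t_{\xi}$.

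The first isotopy straightens the circle foliation along the characteristic foliation, exactly as envisaged in Section~\ref{section:idea}. In the coordinates $(z,s)\in\partial\D\times(-\sqrt{1-t^2},\sqrt{1-t^2})$ on $S^t\setminus\{q^t_{\pm}\}$ supplied by $u^t_s(z)$, in which the leaves of $S^t_{\xi}$ are the arcs $\{z=\text{const}\}$ running from $q^t_-$ to $q^t_+$, the transversality statement says that each leaf of $h(\mathfrak{C}_t)$ is a graph $s=\beta(z)$ over these arcs. Sliding each such graph along the arcs to the corresponding horizontal circle $\{s=\text{const}\}$, and keeping everything fixed near $\UU^{\delta}$ where the foliations already agree, defines a level-preserving isotopy $\Theta_{\tau}$ of $S^3$, supported away from $\overline{\UU}^{\delta}$, with $\Theta_0=\id$ and such that $h':=\Theta_1\circ h$ maps every circle $\partial D^t_s$ to itself. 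Since $\Theta_{\tau}$ moves points only along the arcs of $S^t_{\xi}$, it preserves each marked leaf $\ell^t_k$; hence $h'(\ell^t_k)\subset\ell^t_k$, and as $h'$ also fixes $\partial D^t_s$ setwise it fixes the three marked points $u^t_s(i^k)=\partial D^t_s\cap\ell^t_k$ on each boundary circle.

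Finally I would straighten $h'$ along the circles. On each circle $\partial D^t_s$ the restriction of $h'$ is, read off in the parameter $z\in\partial\D$, an orientation-preserving diffeomorphism $\gamma_{s,t}$ of $S^1$ fixing the three points $i^0,i^1,i^2$; it equals the identity near $\UU^{\delta}$ and depends smoothly on $(s,t)$. The space of such diffeomorphisms is contractible: lifting to the universal cover, the affine homotopy $(1-\tau)\tilde{\gamma}_{s,t}+\tau\,\id$ keeps a positive derivative and fixes the marked points, so these contractions assemble into a level-preserving isotopy $\Xi_{\tau}$, again trivial near $\overline{\UU}^{\delta}$, from $h'$ to a diffeomorphism $h''$ that restricts to the identity on every circle $\partial D^t_s$. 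Since these circles together with $\overline{\UU}^{\delta}$ cover $S^3$, this forces $h''=\id$. Concatenating the two isotopies gives $h\simeq\id$ rel a neighbourhood of $\overline{\UU}^{\delta}$, and hence $f\circ f_{\st}^{-1}\simeq\varphi$. This last step is where the three marked points do their work in place of the parametric statement $\Gamma_2=0$: because they pin down the boundary parametrisation, the only residual freedom on each circle is a diffeomorphism of $S^1$ fixing three points, which is canonically contractible. I expect the main difficulty to lie in the first isotopy, namely in carrying out the straightening along the characteristic foliation smoothly and simultaneously in all parameters $(s,t)$ while keeping it fixed near $\overline{\UU}^{\delta}$ and controlled towards the poles $q^t_{\pm}$; the positive transversality to $S^t_{\xi}$ furnished by the maximum principle is exactly what guarantees that the relevant leaves are graphs, so that this straightening is unobstructed.
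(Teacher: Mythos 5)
Your proposal is correct and takes essentially the same route as the paper: the paper's proof works with $\chi=f_{\st}^{-1}\circ\varphi^{-1}\circ f$ on $S^1\times\D_{1-\delta}$ (your $h$ read in the coordinates given by $F_{\st}$), first straightens the $s$-component via the interpolation $(\chi^1,(1-\sigma)\chi^2+\sigma s,t)$ using exactly your graph property coming from $\partial_\theta\chi^1>0$, and then straightens the angular component by the affine homotopy $(1-\tau)\chi^1+\tau\theta$ pinned down by the marked point at $\theta=0$. The only cosmetic differences are that you phrase the first step as an ambient isotopy of $S^3$ rather than an interpolation of maps in the $(\theta,s,t)$-chart, and that you invoke all three marked points in the second step where the paper uses just one.
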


Before proving this lemma, we show that Cerf's theorem is
now an immediate consequence.

\begin{proof}[Proof of Cerf's theorem $\Gamma_4=0$]
Let $\VV^{\delta}$ be the open subset of $D^4$ defined by
\[ \VV^{\delta}:= F_{\st}(\D\times\{1-\delta<s^2+t^2\leq1\}) .\]
Notice that $\VV^{\delta}\cap S^3=\UU^{\delta}$.
Define $G\co D^4\ra D^4$ by
\[ G:=
\begin{cases}
F\circ F_{\st}^{-1} & \quad\mbox{\rm on}\quad D^4\setminus\VV^{\delta},\\
\id                 & \quad\mbox{\rm on}\quad \VV^{\delta}.
\end{cases}
\]
This is a diffeomorphism of $D^4$ that restricts to
$f\circ f_{\st}^{-1}$ on~$S^3$.

Now an extension of the diffeomorphism $\varphi\co S^3\ra S^3$ to
$D^4$ is defined by sweeping out the isotopy to
$f\circ f_{\st}^{-1}$ over a collar neighbourhood
of $S^3$ in~$D^4$, and then extending over the remaining $4$-ball
as the diffeomorphism~$G$.
\end{proof}

\begin{proof}[Proof of Lemma~\ref{lem:isotopy}]
Consider the diffeomorphism $\chi$ of $S^1\times\D_{1-\delta}$
defined as the composition $\chi = f_{\st}^{-1}\circ\varphi^{-1}\circ f$.
For ease of notation we identify $S^1$ with $\R/2\pi\Z$ and write
$\chi =\chi (\theta ,s,t)$.
This diffeomorphism $\chi$ equals the identity near the boundary.
Our condition on the marked point $1\in S^1=\partial\D$ (corresponding
to $\theta =0$) translates
into saying that $\chi (0,s,t)=(0,s,t)$ for all $(s,t)\in
\D_{1-\delta}$.

Write the components of $\chi$ as $\chi =(\chi^1,\chi^2,\chi^3)$.
The $t$-level Bishop discs for $\varphi$ have boundary in
$\tilde{S}^t=\varphi (S^t)$. This implies $\chi^3(\theta ,s,t)=t$.

The map $f_{\st}$ sends the $s$-curves to the leaves of the
characteristic foliation $S^t_{\xi}$, hence $\varphi\circ f_{\st}$
sends those curves to the leaves of~$\tilde{S}^t_{\xi}$.
On the other hand, the $\theta$-curves are mapped by $f$ to curves
positively transverse to the leaves of~$\tilde{S}^t_{\xi}$, thanks to the
maximum principle. We conclude $\partial_{\theta}\chi^1>0$.
This implies that, for each fixed~$t$, the images of the $\theta$-curves
under $\chi$ are graphs of functions $s=s(\theta )$.

The idea mentioned in Section~\ref{section:idea}
of isotoping the boundaries of
the Bishop discs for $\varphi$ to the images under $\varphi$ of the
boundaries of the standard Bishop discs translates into the isotopy
\[ (\theta ,s,t)\longmapsto
(\chi^1,(1-\sigma )\chi^2+\sigma s,t),\;\; \sigma\in [0,1].\]
The condition $\partial_{\theta}\chi^1>0$ and the resulting
fact that $\chi$ maps each $\theta$-curve to a graph guarantee that this
is indeed an isotopy, stationary near the boundary; thanks to this
boundary behaviour
it is enough to observe that for each $\sigma\in [0,1]$ the given map
constitutes an injective immersion.

For each fixed $s,t$ we may regard the function
$\theta\mapsto\chi^1(\theta ,s,t)$ as a strictly increasing function
$[0,2\pi]\ra [0,2\pi]$ sending both $0$ and $2\pi$ to itself.
With this interpretation, an isotopy from the map above for $\sigma =1$
to the identity is given by
\[ (\theta ,s,t)\longmapsto
((1-\tau )\chi^1+\tau\theta ,s,t),\;\; \tau\in [0,1].\] 
Again, this isotopy is stationary near the boundary.

In conclusion, we have found an isotopy from $\chi$ to
the identity, stationary near the boundary, which is equivalent
to having the isotopy claimed in the lemma.
\end{proof}
\section{Compactness}
\label{section:compactness}
In the present section we wish to show that the truncated moduli space
$\WW_{\varphi}^{\delta}$ is compact; in the next section we find
that it is a manifold with boundary by proving 
surjectivity of the relevant linearised Cauchy--Riemann operator.
For both these analytical questions it is convenient and customary
to work not with smooth maps, but
with the space $W^{1,p}$, for some $p>2$, of (equivalence classes of)
$L^p$-functions whose first partial derivatives in the weak sense
exist and are $p$-integrable.

This approach is justified
by elliptic regularity, see~\cite[Appendix~B]{mcsa04}:
(i) a holomorphic curve of class $W^{1,p}$ will
actually be smooth up to the boundary; (ii) a sequence
of holomorphic curves that converges in the $W^{1,p}$-norm
also converges locally uniformly with all its derivatives.

Thus, we now equip the moduli space $\WW_{\varphi}$ with the topology
induced by the $W^{1,p}$-norm, $p>2$, on maps $\D\ra D^4$.
In the Banach space $W^{1,p}(\D ,\R^4)$ we then have tools such as
the implicit function theorem at our disposal.

Let $(u_{\nu})$, with $u_{\nu}$ of level~$t_{\nu}$, be a sequence
in the truncated moduli space $\WW_{\varphi}^{\delta}$ converging to
$u_0\in W^{1,p}(\D ,D^4)$. By passing to a subsequence we may
assume that the sequence of levels $(t_{\nu})$ converges to some
$t_0\in [-\sqrt{1-\delta},\sqrt{1-\delta}]$. By what we just said,
$(u_{\nu})$ converges with all derivatives to~$u_0$. Thus, $u_0$
is again a holomorphic disc. Moreover, the boundary circles
$u_{\nu}(\partial\D )\subset \tilde{S}^{t_{\nu}}\setminus
\{ q^{t_{\nu}}_{\pm}\}$ are homotopically non-trivial and stay outside the
neighbourhood $\UU^{\delta}$ of the poles $q^t_{\pm}$. So the same will be
true for~$u_0$. In other words, $\WW_{\varphi}^{\delta}$
is a closed subset of $W^{1,p}(\D ,D^4)$.

Our proof of the following proposition uses
methods from~\cite{hofe99}, cf.\ in particular
the proof of Proposition~3.15 in that paper.

\begin{prop}
\label{prop:compact}
The truncated moduli space $\WW_{\varphi}^{\delta}$ is compact.
\end{prop}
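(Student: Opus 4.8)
The plan is to prove compactness by establishing sequential compactness: starting from an arbitrary sequence $(u_\nu)$ in $\WW_\varphi^\delta$, I want to extract a subsequence converging in $W^{1,p}$ to a limit that again lies in $\WW_\varphi^\delta$. Since the preceding discussion already shows that $\WW_\varphi^\delta$ is a \emph{closed} subset of $W^{1,p}(\D,D^4)$, the only thing missing is the extraction of a convergent subsequence; that is, I need an a priori $W^{1,p}$-bound together with control that prevents the usual failure modes of Gromov compactness, namely bubbling and boundary degeneration.

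First I would record the two inputs that make this possible: the uniform energy bound from Proposition~\ref{prop:energy}, $E(u_\nu)\le 4\pi C$, and the uniform Maslov/area control coming from the fact that every disc has the \emph{same} relative homotopy class $A^{t_\nu}$ with $\mu(A^{t_\nu})=2$ (Proposition~\ref{prop:mu2}). Passing to a subsequence, I assume the levels $t_\nu\to t_0\in[-\sqrt{1-\delta},\sqrt{1-\delta}]$. The boundary conditions live on the \emph{moving} totally real family $\tilde S^{t_\nu}\setminus\{q^{t_\nu}_\pm\}$, but because $\varphi$ is a fixed contactomorphism and the truncation forces the boundary circles into the compact set $S^3\setminus\UU^\delta$, these submanifolds vary in a compact family of totally real surfaces, uniformly bounded away from the singular poles. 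This uniform total reality away from $K$ is exactly what lets me apply the standard elliptic estimates of \cite{mcsa04} uniformly in $\nu$.

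The heart of the argument, following the cited Proposition~3.15 of \cite{hofe99}, is to rule out bubbling. The strategy is the usual rescaling (bubbling-off) analysis: if the gradients $|\nabla u_\nu|$ were unbounded, I would rescale near a point of gradient concentration and extract, in the limit, either a nonconstant holomorphic sphere in $D^4$ or a nonconstant holomorphic disc with boundary on the limiting totally real surface $\tilde S^{t_0}\setminus\{q^{t_0}_\pm\}$. An interior sphere is excluded because $\omega=d\lambda$ is exact and $D^4$ carries no nonconstant holomorphic spheres, so any such bubble would have zero energy and be constant. A boundary disc bubble is excluded by the maximum principle (Proposition~\ref{prop:maximum}): the nonconstant limit disc would have to map $\Int\D$ into $\Int D^4$ with boundary positively transverse to the characteristic foliation, forcing it to carry a definite quantum of energy; combined with the Maslov-index-$2$ constraint and the additivity of energy and Maslov index under bubbling, this would leave the principal component with index less than $2$, which is impossible for a holomorphic disc with the given boundary condition. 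Hence no energy can escape, the gradients stay uniformly bounded, and elliptic bootstrapping upgrades $C^0$-convergence to $W^{1,p}$-convergence.

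I expect the bubbling-off analysis at the boundary to be the main obstacle, precisely because the totally real boundary condition is not fixed but varies with the level parameter $t$, and the family degenerates at the poles $q^t_\pm$ where total reality fails. The truncation to $\WW_\varphi^\delta$ is the device that neutralises this: it keeps every boundary circle inside $S^3\setminus\UU^\delta$, hence at uniformly positive distance from $K$, so the limit surface $\tilde S^{t_0}\setminus\{q^{t_0}_\pm\}$ is genuinely totally real along the relevant region and no degeneration of the boundary condition can occur. Once bubbling is excluded and the limit $u_0$ is seen to be a holomorphic disc of level $t_0$ with boundary in the correct class and outside $\UU^\delta$, closedness of $\WW_\varphi^\delta$ finishes the proof. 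The marked-point conditions $u(i^k)\in\tilde\ell^{t_0}_k$ pass to the limit by continuity of evaluation, so $u_0$ is a genuine $t_0$-level Bishop disc and lies in $\WW_\varphi^\delta$.
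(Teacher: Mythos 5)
Your overall strategy -- rescaling/bubbling analysis, with sphere bubbles excluded by exactness of $\omega$ and disc bubbles excluded by an index count -- matches the paper's framework up to the decisive step, but that decisive step has a genuine gap. You claim that a boundary disc bubble ``would leave the principal component with index less than $2$, which is impossible for a holomorphic disc with the given boundary condition.'' This is not true as stated: a \emph{constant} disc is a perfectly admissible component of a bubbling limit, has Maslov index $0$, and index additivity is entirely consistent with the configuration consisting of a constant principal component together with a single nonconstant disc bubble of Maslov index $2$. Indeed, this is exactly what threatens to happen when the gradients blow up at a boundary point: all the energy concentrates near one point $z_0\in\partial\D$, and the rescaled limit is a genuine Maslov-index-$2$ disc while the original parametrisations degenerate. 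Neither energy quantization nor index additivity excludes this; some additional geometric input is required, and this is precisely where the paper's proof does its real work. The paper rules out this scenario by a measure argument on the leaf space of the characteristic foliation $\tilde{S}^{t_0}_{\xi}$: the bubble's boundary, being positively transverse to the foliation, sweeps out nearly all of the leaf space, so for large $\nu$ the curve $u_{\nu}|_{\partial\D}$ covers measure at least $7/8$ of the leaf space within an arc $I_{\nu}$ of shrinking length; but the marked-point conditions force one of the three arcs between $1,i,-1$, disjoint from $I_{\nu}$, to cover measure exactly $1/4$ or $1/2$, and since $u_{\nu}|_{\partial\D}$ is injective as a map to the leaf space, the total exceeds $1$ -- a contradiction.

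Your proposal could be repaired along similar lines, because the marked points are indeed the missing ingredient: if the principal component were constant, its image would be a single point required to lie on at least two of the three mutually disjoint leaves $\tilde{\ell}^{t_0}_k$ (at most one marked point can be absorbed into the bubble at $z_0$), which is absurd. But in your write-up the marked points appear only at the very end, to verify that the limit satisfies the Bishop-disc conditions ``by continuity''; they play no role in excluding degeneration, which is where they are actually indispensable. A second, lesser issue: your argument presupposes a stable-map compactness theorem (additivity of energy and Maslov index over the components of a bubble tree) for discs with \emph{moving} totally real boundary conditions, which is considerably more machinery than the paper invokes; the paper instead performs the rescaling by hand via Hofer's lemma and straightens the varying boundary condition with an explicit sequence of diffeomorphisms $\Psi_{\nu}$ before applying the local convergence theorem of McDuff--Salamon. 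Your exclusion of sphere bubbles by exactness of $\omega=d\lambda$ is correct and is, if anything, cleaner than the paper's appeal to the maximum principle.
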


\begin{proof}
Let $(u_{\nu})$ be a sequence in $\WW_{\varphi}^{\delta}$, where
$u_{\nu}$ is of level~$t_{\nu}$.
After passing to a subsequence we may assume that $t_{\nu}\ra t_0\in
[-\sqrt{1-\delta},\sqrt{1-\delta}]$.
By \cite[Theorem~B.4.2]{mcsa04}, in order to prove compactness
(i.e.\ to find a converging subsequence with respect to the
$W^{1,p}$-norm), we need to establish a uniform $L^p$-bound for
the sequence $(|\nabla u_{\nu}|)$. We claim that the sequence
$(|\nabla u_{\nu}|)$ is uniformly bounded even in the supremum norm
on the closed disc~$\D$.
  
Arguing by contradiction, assume that such a uniform bound does
not exist. We can then find a sequence of points
$z_{\nu}\ra z_0$ in $\D$ such that $|\nabla u_{\nu}(z_{\nu})| \ra\infty$.
The classical convergence theorems of Montel and Weierstra{\ss}
preclude this at interior points: the maximum principle
provides us with a $C^0$-bound on $(u_{\nu})$ needed for Montel's theorem,
which then guarantees the existence of a locally uniformly convergent
subsequence on $\Int\D$; the theorem of Weierstra{\ss}
tells us that the limit function $u$ is holomorphic and
$\nabla u_{\nu}(z_{\nu})\ra\nabla u(z_0)$, which means that
this sequence of gradients remains bounded.

So necessarily $z_0\in\partial\D$.
Choose a conformal map from the closed upper half-plane
$\Hp\subset\C$ to $\D\setminus\{ -z_0\}$ that sends $0$ to~$z_0$.
(This extends to a conformal map from $\Hp\cup\{\infty\}
\subset\hat{\C}:=\C\cup\{\infty\}$ to~$\D$.)
The differential of this map is bounded from above and below near~$0\in\Hp$,
so by precomposing with this conformal map we may regard the~$u_{\nu}$,
by slight abuse of notation, as maps
\[ u_{\nu}\co (\Hp ,\R )\lra (D^4,\tilde{S}^{t_{\nu}}\setminus
\{ q^{t_{\nu}}_{\pm}\} ),\]
and the sequence $(z_{\nu})$ as a sequence in $\Hp$ converging to~$0$,
still satisfying
\[ R_{\nu}:=|\nabla u_{\nu} (z_{\nu})| \ra\infty.\]
Notice that by Proposition~\ref{prop:energy}
and the remark following it we have a uniform bound
on the Dirichlet energy, i.e.\ there is a constant $C$ such that
\[ \frac{1}{2}\int_{\Hp}|\nabla u_{\nu}|^2\leq C
\;\;\mbox{\rm for all}\;\;\nu\in\N.\]

Choose a sequence $\varepsilon_{\nu}\searrow 0$ such that $\varepsilon_{\nu}
R_{\nu}\ra\infty$, e.g.\ $\varepsilon_{\nu}=1/\ln R_{\nu}$.
Hofer's Lemma~\cite[Lemma~6.4.5]{hoze94},
applied to the continuous function $z\mapsto|\nabla u_{\nu}(z)|$,
allows us to modify the sequences $(z_{\nu})$ and $(\varepsilon_{\nu})$
in such a way that, in addition to the previous conditions, we have
the uniform estimates
\[ |\nabla u_{\nu}(z)| \leq 2R_{\nu}\;\;\mbox{\rm for all}\;\;
z\in\Hp \;\;\mbox{\rm with}\;\; |z-z_{\nu}|\leq\varepsilon_{\nu}.\]
After passing to a further subsequence, we may assume that
\[ R_{\nu}\dist(z_{\nu},\partial\Hp) \ra r\;\;
\mbox{\rm for some}\;\; r\in [0,\infty].\]
When we write $z_{\nu}=x_{\nu}+iy_{\nu}$, this reads as
$R_{\nu}y_{\nu}\ra r$.
We shall deal separately with two cases: either $r<\infty$ or $r=\infty$.

\vspace{1mm}

{\em First case:\/} $r<\infty$.
(i) In a first step we are going to show
that a rescaled subsequence of $(u_{\nu})$ converges
to a holomorphic disc. In a second step we then prove that this
limit disc has contradictory properties.
Replace the sequence $(u_{\nu})$ by the
rescaled sequence $(w_{\nu})$, defined by
\[ w_{\nu}(z):= u_{\nu}(x_{\nu}+z/R_{\nu}).\]
Since this amounts to a conformal change of parameters, the Dirichlet energy
of the $w_{\nu}$ is still bounded by~$C$. The uniform gradient estimate
now becomes, with $\zeta_{\nu}:= iR_{\nu}y_{\nu}$,
\[ |\nabla w_{\nu}(z)| \leq 2 \;\; \mbox{\rm for all}\;\;
z\in\Hp\;\;\mbox{\rm with}\;\;
|z-\zeta_{\nu}|\leq\varepsilon_{\nu}R_{\nu}.\]
Notice that $\zeta_{\nu}\ra ir$ and
$|\nabla w_{\nu}(\zeta_{\nu})|=1$.

We wish to apply \cite[Theorem~B.4.2]{mcsa04} in order
to extract a $C^{\infty}_{\loc}$-converging subsequence of $(w_{\nu})$.
That theorem allows us to have varying almost complex structures,
but requires a fixed boundary condition. Therefore we modify our set-up
as follows. Choose a sequence $(\Psi_{\nu})$ of diffeomorphisms
of $\C^2$ with the following properties:
\begin{itemize}
\item[(1)] $\Psi_{\nu}$ equals the identity outside a ball of radius~$2$,
\item[(2)] $\Psi_{\nu}\ra\id$ in the $C^{\infty}$-topology,
\item[(3)] $\Psi_{\nu}(\tilde{S}^{t_{\nu}})=\tilde{S}^{t_0}$ and
$\Psi_{\nu}(q^{t_{\nu}}_{\pm})=q^{t_0}_{\pm}$.
\end{itemize}
Such $\Psi_{\nu}$ can be constructed by suitably cutting off
the gradient flow of the function $H\circ\varphi^{-1}$ on~$S^3$,
where $H$ is the height function defined in Section~\ref{section:idea}.

Set $\hat{w}_{\nu}:=\Psi_{\nu}\circ w_{\nu}$ and
$J_{\nu}:= T\Psi_{\nu}\circ J_0\circ T\Psi_{\nu}^{-1}$,
so that $J_{\nu}\ra J_0$.
Thus, we now have a sequence $(\hat{w}_{\nu})$
of $J_{\nu}$-holomorphic maps
\[ \hat{w}_{\nu}\co (\Hp,\R)\lra (D^4,
\tilde{S}^{t_0}\setminus\{q^{t_0}_{\pm}\}).\]
The $\hat{w}_{\nu}$ are still subject to the same estimates
as the $w_{\nu}$, possibly after replacing the relevant constants
by larger ones depending only on the sequence~$(\Psi_{\nu})$.

Now \cite[Theorem~B.4.2]{mcsa04} does indeed permit us to
select a subsequence of $(\hat{w}_{\nu})$
converging in $C^{\infty}_{\loc}$ to a non-constant $J_0$-holomorphic map
\[ w\co (\Hp,\R)\lra (D^4,\tilde{S}^{t_0}\setminus\{q^{t_0}_{\pm}\}).\]
The cited theorem requires the boundary condition to be given
by a closed totally real submanifold. No problems arise from our working
with the punctured $2$-sphere $\tilde{S}^{t_0}\setminus\{q^{t_0}_{\pm}\}$,
since we know a priori that
$w_{\nu}$ sends the boundary circle $\partial\Hp\cup\{\infty\}$
to $\tilde{S}^{t_{\nu}}\setminus\UU^{\delta}$.

The $C^{\infty}_{\loc}$-convergence ensures that the energy of
the limit $w$ is estimated from above by~$C$. By removal of singularities,
see \cite[Appendix~B]{lazz00} and~\cite[Theorem~4.7.3]{siko94},
$w$ extends to a holomorphic
map defined on $\Hp\cup\{\infty\}$. By reversing our change of conformal
parameters, we regard this again as a map
\[ w\co (\D ,\partial\D)\lra (D^4,
\tilde{S}^{t_0}\setminus\{q^{t_0}_{\pm}\}),\]
i.e.\ we have an honest holomorphic disc.

\vspace{1mm}

(ii) We now want to show that such a holomorphic disc $w$ cannot possibly
exist. As we saw in the proof of Proposition~\ref{prop:simple},
the map $u|_{\partial\D}$ is an embedding transverse to
the characteristic foliation $\tilde{S}^{t_0}_{\xi}$.

The leaf space of the characteristic foliation $\tilde{S}^t_{\xi}$,
for each $t\in (-1,1)$ and with the singular points $q^t_{\pm}$
removed, can be identified with $S^1$, where $\theta\in S^1$
corresponds to the leaf $s\mapsto \varphi\circ F_{\st}(e^{i\theta},s,t)$.
Let $\sigma$ be the homogeneous measure on $S^1$ of total
measure~$1$, say.

We now regard $u|_{\partial\D}$
as a map from $\partial\D$ to the leaf space $S^1$
of $\tilde{S}^{t_0}_{\xi}$; the maps $u_{\nu}|_{\partial\Hp}$
and $w_{\nu}|_{\partial\Hp}$ will be viewed similarly.
Choose a segment $I\subset S^1\setminus\{-z_0\}$ containing~$z_0$
such that $\sigma (u(I))\geq 15/16$. Under the identification
of $\partial\D\setminus\{ -z_0\}$ with $\partial\Hp$ we take
$I$ of the form $I=[-R,R]$.

Because of the $C^{\infty}_{\loc}$-convergence we have
$\sigma (w_{\nu}(I))\geq 7/8$ for $\nu$ sufficiently large.
With $I_{\nu}:= [x_{\nu}-R/R_{\nu},x_{\nu}+R/R_{\nu}]$ this means that
$\sigma (u_{\nu}(I_{\nu}))\geq 7/8$. The length of the interval
$I_{\nu}$ tends to zero, so at least one of the three intervals in
$\partial\Hp\cup\{\infty\}=\partial\D$ between the three
marked points $1, i, -1$ will be disjoint from~$I_{\nu}$ for
$\nu$ sufficiently large; call this interval~$I'$. The condition
on marked points of our holomorphic discs implies that $\sigma (u_{\nu}(I'))
=\sigma (I')$, which equals $1/4$ or $1/2$. Hence $\sigma (u_{\nu}(I))+
\sigma (u_{\nu}(I'))>1$, contradicting the fact that
$u_{\nu}(I)\cap u_{\nu}(I')=\emptyset$.
 
This contradiction completes the proof in the first case.

\vspace{1mm}

{\em Second case:\/} $r=\infty$.
In this case we define the rescaled sequence $(w_{\nu})$ by
\[ w_{\nu}(z):= u_{\nu}(z_{\nu}+z/R_{\nu})
\;\;\mbox{\rm for}\;\; z=x+iy\;\; \mbox{\rm with}\;\;
y\geq -y_{\nu}R_{\nu}.\]
Then $|\nabla w_{\nu}(0)|=1$, the Dirichlet energy of the $w_{\nu}$
is bounded by~$C$, and we have
the uniform estimate
\[ |\nabla w_{\nu}(z)| \leq 2 \;\; \mbox{\rm for all}\;\;
z\in\Hp\;\;\mbox{\rm with}\;\;
|z-z_{\nu}|\leq\varepsilon_{\nu}R_{\nu}.\]

Again we may appeal to~\cite[Theorem~B.4.2]{mcsa04};
this now gives us a subsequence of $(w_{\nu})$
that converges in $C^{\infty}_{\loc}$ to
a holomorphic map $w\co\C\ra\C^2$, which is bounded by~$1$
and non-constant because of $|\nabla w(0)|=1$.
By the classical {\it Hebbarkeitssatz\/} of Riemann, now applied to a
conformal chart of $\hat{\C}=\C\cup\{\infty\}$ near $\infty$,
we obtain a non-constant holomorphic sphere $w:S^2\ra\C^2$,
in contradiction to the maximum principle.
\end{proof}

\begin{rem}
With a view towards potential generalisations of this
result it is opportune to remark that the references to
classical theorems of complex analysis can be substituted
by results that hold for almost complex structures
tamed by a symplectic form.

For proving that $(|\nabla u_{\nu}|)$ is uniformly bounded one
may replace the reference to the convergence theorems of
Montel and Weierstra{\ss} by arguments from~\cite[Section~4.2]{mcsa04}.
For $z_{\nu}\ra z_0\in\Int\D$ and $|\nabla u_{\nu}(z_{\nu})|\ra
\infty$ these arguments would allow one to infer the
existence of a bubble, i.e.\ a non-constant holomorphic sphere
in~$\C^2$, in violation of the maximum principle.

In place of Riemann's {\em Hebbarkeitssatz\/} one may
cite~\cite[Theorem~4.1.2]{mcsa04}.
\end{rem}
\section{Transversality}
\label{section:transversality}
Here is the main result of the present section:

\begin{prop}
\label{prop:transverse}
The truncated moduli space of Bishop discs $\WW_{\varphi}^{\delta}$
is a $2$-dimensio\-nal manifold with boundary
$\partial\WW_{\varphi}^{\delta}=\{u_s^t\co s^2+t^2=1-\delta\}$.
\end{prop}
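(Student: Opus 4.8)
The plan is to realise $\WW_{\varphi}^{\delta}$ as the zero set of a Cauchy--Riemann section of a Banach bundle over a Banach manifold in which the level $t$ appears as a parameter, exactly the kind of varying-boundary-condition problem studied by Wendl~\cite{wend05}, and then to invoke the implicit function theorem. Concretely, I would let $\BB$ be the Banach manifold of pairs $(t,u)$ with $t\in[-\sqrt{1-\delta},\sqrt{1-\delta}]$ and $u\in W^{1,p}(\D,D^4)$, $p>2$, whose boundary lies on $\tilde S^t\setminus\{q^t_{\pm}\}$, which represent the class $A^t$, which satisfy the marked-point conditions $u(i^k)\in\tilde\ell^t_k$, and which keep $u(\partial\D)$ outside $\UU^{\delta}$; over $\BB$ I put the bundle with fibre $L^p(\D,\Omega^{0,1}\otimes_{J_0}u^*T\C^2)$ and the section $(t,u)\mapsto\DB u$. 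Elliptic regularity, as recalled at the start of Section~\ref{section:compactness}, guarantees that zeros of this section are smooth up to the boundary, so $\WW_{\varphi}^{\delta}$ is precisely its zero set. The crucial point, and the one I expect to be the main obstacle, is that $J_0$ is fixed --- we cannot perturb it without destroying strict pseudoconvexity and the maximum principle --- so ordinary genericity is unavailable and surjectivity of the linearisation must be established by hand.

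For the dimension count I would first work at a fixed level $t$. By the Riemann--Roch theorem for bundle pairs~\cite[Appendix~C]{mcsa04} the linearised operator $D_u$, acting on $W^{1,p}$-sections of $u^*T\C^2$ with boundary values in $(u|_{\partial\D})^*T\tilde S^t$, is Fredholm of index $n\,\chi(\D)+\mu(A^t)=2+2=4$, using $n=2$ and Proposition~\ref{prop:mu2}. Allowing $t$ to vary contributes one further parameter, while the three marked-point conditions cut the dimension by~$3$, transversally, by the argument promised in Lemma~\ref{lem:ev}. The expected dimension is therefore $4+1-3=2$, as claimed.

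The heart of the matter is surjectivity of $D_u$. Here I would exploit that by Proposition~\ref{prop:embedded} every Bishop disc is embedded, hence immersed, so that $u^*T\C^2$ splits as $T_u\oplus N_u$ into tangent and normal complex line bundles, with a corresponding splitting of the totally real boundary condition into a tangential line and a normal line $\ell_N$. The tangential part of $D_u$ is surjective --- it accounts for the reparametrisations by $\mathrm{Aut}(\D)$ together with the standard family --- so surjectivity of $D_u$ reduces to surjectivity of the induced normal operator $D_u^N$ on the line-bundle pair $(N_u,\ell_N)$. Its Maslov index is $\mu(A^t)-2=0$, since the tangential bundle pair of a disc has Maslov index~$2$. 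This is exactly the regime of automatic transversality in real dimension four: an element of the cokernel of $D_u^N$ solves the formally adjoint Cauchy--Riemann equation, so by the Carleman similarity principle~\cite[Appendix~C]{mcsa04} it has only isolated zeros, each counting positively, whose total number is governed by the now negative index of the adjoint bundle pair. A nonzero such section is therefore impossible, the cokernel vanishes, and the implicit function theorem presents a neighbourhood of each interior disc as a smooth $2$-manifold.

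It remains to account for the boundary, and here Corollary~\ref{cor:standard} does the work: a $t$-level Bishop disc whose boundary meets $\overline{\UU}^{\delta}$ must be one of the explicit standard discs $u^t_s$ with $1-\delta-t^2\le s^2<1-t^2$. Consequently, near the boundary of the truncation the moduli space coincides with the standard family $(s,t)\mapsto u^t_s$, parametrised by the closed parameter disc $\D_{1-\delta}=\{s^2+t^2\le1-\delta\}$, and the constraint $u(\partial\D)\subset S^3\setminus\UU^{\delta}$ reads $s^2+t^2\le1-\delta$. A neighbourhood of a disc with $s^2+t^2=1-\delta$ is thus a half-disc, furnishing a manifold-with-boundary chart and identifying $\partial\WW_{\varphi}^{\delta}$ with $\{u^t_s\co s^2+t^2=1-\delta\}$. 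Together with the interior charts from the implicit function theorem, this proves the proposition.
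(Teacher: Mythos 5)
Your proof is correct in outline and shares the paper's strategy --- a nonlinear Fredholm set-up in which transversality must be ``automatic'' because $J_0$ cannot be perturbed, the index count $4+1-3=2$, and the boundary of the truncated space identified via Corollary~\ref{cor:standard} --- but it executes the two key technical steps differently. First, you encode the varying level as an explicit parameter in pairs $(t,u)$, citing Wendl~\cite{wend05} for the Banach-manifold structure; the paper instead realises the same space intrinsically as the preimage of the constant functions under $u\mapsto\widetilde{H}\circ u|_{\partial\D}$ and verifies the submanifold property by hand, solving Poisson boundary value problems to produce a closed complement of the kernel --- more self-contained, same effect. Second, for surjectivity of $D_u$ you invoke the Hofer--Lizan--Sikorav splitting~\cite{hls97} of $u^*T\C^2$ into tangent and normal line bundles of the embedded disc (Proposition~\ref{prop:embedded}), reduce to the normal operator of Maslov index $0$, and kill its cokernel by duality and the Carleman similarity principle; the paper instead constructs an explicit chart adapted to the characteristic foliation and to $\nabla\widetilde{H}$ (Lemma~\ref{lem:chart}), in which $D_u$ becomes upper triangular with line-bundle boundary problems of Maslov indices $2$ and $0$, and applies the Riemann--Roch theorem~\cite[Theorem~C.1.10]{mcsa04} componentwise. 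What the paper's heavier chart construction buys is not surjectivity --- your route gets that just as well --- but the explicit kernel generators $\sigma,\tau$ of Proposition~\ref{prop:sigma-tau}, whose pointwise properties drive the later proofs of Propositions~\ref{prop:ev} and~\ref{prop:F-delta}; your softer cokernel-vanishing argument yields the dimension but not these generators, which is fine for Proposition~\ref{prop:transverse} itself. Two points you should tighten: the marked-point reduction you defer to Lemma~\ref{lem:ev} is a genuine (if short) argument, producing kernel elements by M\"obius flows of the marked points that hit the $\tilde{Q}_k$ transversely; and at the truncation boundary you need the observation that the standard discs $u^t_s$ with $s^2+t^2$ slightly \emph{below} $1-\delta$ also belong to $\WW_{\varphi}$ (because $\varphi=\id$ on the open set $\UU\supset\overline{\UU}^{\delta}$), so that the two-parameter standard family fills out a full neighbourhood of each boundary point inside the $2$-manifold $\WW_{\varphi}$ --- Corollary~\ref{cor:standard} by itself only accounts for the discs whose boundary actually meets $\overline{\UU}^{\delta}$.
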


For the proof of this proposition we shall establish automatic transversality
results as in \cite{grom85} and~\cite{hls97}; the attribute `automatic' refers
to the fact that no perturbation of the (almost) complex structure
is required to guarantee surjectivity of the Fredholm operator
in question.

We initially drop the condition on the three marked points.
Thus, write $\widetilde{\WW}_{\varphi}$ for the {\bf free moduli space}
of all $t$-level Bishop discs for~$\varphi$, where $t$
varies in the interval $(-1,1)$,
but now without any restriction on the image of the points $1,i,-1$.
Our aim will be to show that $\widetilde{\WW}_{\varphi}$
is a $5$-dimensional manifold. Once this has been achieved,
the following simple lemma allows us to
deduce that our original moduli space $\WW_{\varphi}$ is a
$2$-dimensional manifold.

The tangent space $T_uW^{1,p}(\D,\R^4)$ to the Banach space
$W^{1,p}(\D,\R^4)$ at some point
$u$ of that Banach space can of course be naturally identified with
the Banach space itself. When the target space is not a linear space,
but some manifold $W$, the tangent space $T_uW^{1,p}(\D,W)$
to the Banach manifold $W^{1,p}(\D ,W)$
is the space of $W^{1,p}$-sections of the pullback bundle
$u^*TW$, cf.~\cite[Chapter~3]{mcsa04} or
\cite[Section~6.2]{abklr94}; in other words,
a tangent vector $\eta$ at $u$ is a map $\eta\co\D\ra T\R^4$ with
$\eta (z)\in T_{u(z)}\R^4$. We also want to consider Banach manifolds
of relative maps $(\D ,\partial\D)\ra (\R^4,\Sigma )$, where
$\Sigma\subset S^3$ is a totally real submanifold. Then tangent vectors
are sections of the bundle pair (cf.\ Section~\ref{section:idea})
$u^*(T\R^4,T\Sigma )$. The details of this relative case are worked out
in~\cite[Section~3.1]{zehm03}. The key point here is to
choose an auxiliary metric for which $\Sigma$ is totally
geodesic in~$\R^4$. Then the composition of a section of
$u^*(T\R^4,T\Sigma )$ with the exponential map for this
metric will be a map $(\D ,\partial\D )\ra (\R^4,\Sigma )$.

In particular, if $m_{\tau}$, $\tau\in(-\varepsilon ,
\varepsilon )$, is a holomorphic reparametrisation of~$\D$, with
$m_0=\id_{\D}$, we may regard $\eta :=
(d/d\tau)|_{\tau =0}(u\circ m_{\tau})$
as a tangent vector in this sense; here $\eta (z)$ is actually
tangent to $u(\D )$ at $u(z)$; and for $z\in\partial\D$, tangent
to~$u(\partial\D )$.

See Section~\ref{subsection:Q} for the definition of $Q_k$
used in the following lemma.

\begin{lem}
\label{lem:ev}
The evaluation map
\[
\begin{array}{rccc}
\ev_{1,i,-1}\co & \widetilde{\WW}_{\varphi} & \lra       &
                       S^3\times S^3\times S^3\\
                & u                         & \longmapsto &
                       (u(1),u(i),u(-1))
\end{array}
\]
is transverse to
$\tilde{Q}_0\times\tilde{Q}_1\times\tilde{Q}_2$, where
$\tilde{Q}_k:=\varphi (Q_k)$.
\end{lem}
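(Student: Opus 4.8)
The plan is to verify transversality pointwise and fibrewise: at every disc $u\in\widetilde{\WW}_\varphi$ with $\ev_{1,i,-1}(u)\in\tilde{Q}_0\times\tilde{Q}_1\times\tilde{Q}_2$, I would exhibit a $3$-dimensional subspace of $T_u\widetilde{\WW}_\varphi$ whose image under $d\ev_{1,i,-1}$ already surjects onto the normal bundle of $\tilde{Q}_0\times\tilde{Q}_1\times\tilde{Q}_2$ in $S^3\times S^3\times S^3$. Since $\widetilde{\WW}_\varphi$ is a $5$-manifold and each $\tilde{Q}_k$ has codimension~$1$ in $S^3$, this normal bundle is the direct sum of three $1$-dimensional lines $\nu_{u(i^k)}(\tilde{Q}_k)$, so it suffices to hit these three lines independently. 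The deformations I would use are the reparametrizations: for $X\in\aut(\D)$ the section $\eta_X:=du\cdot X$ is a genuine tangent vector to $\widetilde{\WW}_\varphi$ (as noted in the remark preceding the lemma, $u\circ m_\tau$ is again a $t$-level Bishop disc for every one-parameter subgroup $m_\tau$ of $\mathrm{Aut}(\D)$, no marked-point condition being imposed on $\widetilde{\WW}_\varphi$), and along $\partial\D$ the vector $\eta_X$ is tangent to the boundary curve $u(\partial\D)$.

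The crucial geometric point, which I would establish first, is that the normal direction to $\tilde{Q}_k$ in $S^3$ at $u(i^k)$ is exactly the boundary-tangent direction $\partial_\theta u(i^k)$. Because $\varphi$ is a contactomorphism, it carries $S^t_\xi$ to $\tilde{S}^t_\xi$ and the transverse intersection $Q_k\cap S^t=\ell^t_k$ to $\tilde{Q}_k\cap\tilde{S}^t=\tilde{\ell}^t_k$; hence $\tilde{Q}_k$ meets $\tilde{S}^t$ transversally in $S^3$ with $T_{u(i^k)}\tilde{Q}_k\cap T_{u(i^k)}\tilde{S}^t=T_{u(i^k)}\tilde{\ell}^t_k$, the leaf direction. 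Consequently the normal line $T_{u(i^k)}S^3/T_{u(i^k)}\tilde{Q}_k$ is canonically identified with $T_{u(i^k)}\tilde{S}^t/T_{u(i^k)}\tilde{\ell}^t_k$. By the maximum principle (Proposition~\ref{prop:maximum}), $u|_{\partial\D}$ is positively transverse to $\tilde{S}^t_\xi$, so $\partial_\theta u(i^k)\in T_{u(i^k)}\tilde{S}^t$ is not tangent to the leaf and therefore projects to a nonzero, hence spanning, element of this normal line.

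It then remains the elementary surjectivity check. Writing $X\in\aut(\D)$ on the boundary as $\xi_X(\theta)\,\partial_\theta$, the standard basis $\{iz,\,1-z^2,\,i(1+z^2)\}$ of $\aut(\D)$ restricts to the functions $\{1,\,-2\sin\theta,\,2\cos\theta\}$, so that $d\ev_{1,i,-1}(\eta_X)=\bigl(\xi_X(0)\,\partial_\theta u(1),\,\xi_X(\tfrac{\pi}{2})\,\partial_\theta u(i),\,\xi_X(\pi)\,\partial_\theta u(-1)\bigr)$. Evaluating the three functions at $\theta=0,\tfrac{\pi}{2},\pi$ produces the matrix with rows $(1,0,2)$, $(1,-2,0)$, $(1,0,-2)$, whose determinant is $8\neq0$. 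Thus $X\mapsto(\xi_X(0),\xi_X(\tfrac{\pi}{2}),\xi_X(\pi))$ is an isomorphism onto $\R^3$, and the reparametrization directions map isomorphically onto $\bigoplus_k\nu_{u(i^k)}(\tilde{Q}_k)$. This gives the claimed transversality, and with it the reduction of the dimension of the moduli space by~$3$ advertised in Section~\ref{subsection:Q}.

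The step I expect to be the main obstacle is not the determinant computation but the preliminary identification of the normal direction: one must combine both conclusions of the maximum principle with the fact that $\varphi$ is a contactomorphism to be certain that $T\tilde{Q}_k$ and $T\tilde{S}^t$ intersect in \emph{precisely} the leaf direction $T\tilde{\ell}^t_k$ and in nothing more. Everything else — that $\eta_X$ is genuinely tangent to $\widetilde{\WW}_\varphi$, and that it restricts to a multiple of $\partial_\theta u$ on the boundary — is immediate from the definitions once that transverse intersection of $\tilde{Q}_k$ with the image sphere is under control.
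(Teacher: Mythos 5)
Your proof is correct and takes essentially the same approach as the paper: tangent vectors coming from reparametrisation by $\aut(\D)$, combined with the observation -- obtained exactly as you obtain it, from the maximum principle together with the transverse intersection $\tilde{Q}_k\cap\tilde{S}^t=\tilde{\ell}^t_k$ -- that $\partial_\theta u(i^k)$ is transverse to $\tilde{Q}_k$ in $S^3$. The only difference is cosmetic: where you take the standard basis $\{iz,\,1-z^2,\,i(1+z^2)\}$ of $\aut(\D)$ and verify a $3\times 3$ determinant, the paper chooses the three M\"obius families $m^k_\tau$ that fix two marked points and rotate the third, so that the resulting tangent vectors $\eta^k$ satisfy $\eta^k(i^l)=0$ for $l\neq k$ and surjectivity onto the normal directions is immediate without any matrix computation.
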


\begin{proof}
Observe that
$\WW_{\varphi}=\ev_{1,i,-1}^{-1}(\tilde{Q}_0\times
\tilde{Q}_1\times\tilde{Q}_2)$,
so we need to investigate the
differential of the evaluation map at points $u\in\MM_{\varphi}(t)
\subset\WW_{\varphi}$.
This differential sends a tangent vector $\eta$ to the
tangent vector $(\eta (1),\eta (i),\eta (-1))$ at the point
$(u(1),u(i),u(-1))\in S^3\times S^3\times S^3$.

Let $m^k_{\tau}$, $k=0,1,2$, $\tau\in (-\varepsilon ,\varepsilon )$,
be the $1$-parameter family of M\"obius transformations of $\D$
uniquely determined by the condition that the
marked point $i^k$ be sent to $e^{i\tau}i^k$ and the other two
marked points be fixed.

Recall that, by the maximum principle, $u|_{\partial\D}$ 
is an embedding transverse to the characteristic foliation
$\tilde{S}^t_{\xi}$. The leaf $\tilde{\ell}^t_k$ of this characteristic
foliation is given by the transverse intersection of $\tilde{S}^t$
and the disc $\tilde{Q}_k$. This implies that $u|_{\partial\D}$
is transverse to~$\tilde{Q}_k$.

For each $k\in\{ 0,1,2\}$, the tangent vector
$\eta^k:= (d/d\tau)|_{\tau =0}(u\circ m^k_{\tau})$ maps,
under the differential of the evaluation map, to
$(\eta^k(1),\eta^k(i),\eta^k(-1))$. By what we just said, the tangent vector
$\eta^k(i^k)$ is transverse to
$\tilde{Q}_k\subset S^3$,  and $\eta^k(i^l)=0$ for $l\neq k$.
This proves the lemma.
\end{proof}

We now begin with a systematic description of the analytic setting in which
we want to formulate the transversality results that will imply
Proposition~\ref{prop:transverse}. We work with discs of all levels
in $(-1,1)$, although it would suffice to restrict to
a slightly smaller interval, since Bishop discs of level $t$ with
$|t|\geq\sqrt{1-\delta}$ are standard.

By the Sobolev embedding theorem, each equivalence class of maps
of class $W^{1,p}$ has a continuous representative
for $1\cdot p>\dim_{\R}\D =2$. We think of this representative when we
speak simply of a map of class $W^{1,p}$.
This allows us to introduce the following spaces.

\begin{nota}
Let $\CC\subset W^{1,p}(\D ,\R^4)$ be the subset of maps
$u\co (\D ,\partial\D)\ra (\R^4, S^3\setminus K)$ such
that $u|_{\partial\D}\co \partial\D\ra S^3\setminus K\simeq S^1$
lies in the homotopy class of $u^0_0|_{\partial\D}$, where
$u^0_0$ is one of the standard Bishop discs.

Let $\BB\subset\CC$ be the subset of maps $u$ such that
$u|_{\partial\D}$ maps $\partial\D$ into a punctured sphere
$\tilde{S}^t\setminus\{ q^t_{\pm}\}$ for some $t\in (-1,1)$.
\end{nota}

Observe that $\widetilde{\WW}_{\varphi}=
\{ u\in\BB\co\DB u=0\}$. Contrary to appearances,
the equation $\DB u=0$ on $\BB$ is not a linear one,
since the boundary conditions are not linear.

\begin{prop}
The spaces $\BB$ and $\CC$ are Banach manifolds.
\end{prop}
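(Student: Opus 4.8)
The plan is to handle $\CC$ first as an open subset of a standard Banach manifold of Sobolev maps with totally real boundary condition, and then to realise $\BB$ as the preimage of the constants under a submersion that records the ``level'' of the boundary circle.

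For $\CC$ I would invoke the chart construction described just before the Notation above. Fixing an auxiliary metric on $\R^4$ for which $S^3$ is totally geodesic, the exponential map turns small $W^{1,p}$-sections $\eta$ of the bundle pair $u_0^*(T\R^4,TS^3)$ into maps $u=\exp_{u_0}\eta\co(\D,\partial\D)\to(\R^4,S^3)$; these charts equip the space $\MM$ of all $W^{1,p}$-maps $(\D,\partial\D)\to(\R^4,S^3)$ with the structure of a Banach manifold whose tangent space $T_{u_0}\MM$ consists of the $W^{1,p}$-sections of $u_0^*(T\R^4,TS^3)$, cf.~\cite[Section~3.1]{zehm03} and \cite[Chapter~3]{mcsa04}. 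It then remains to observe that $\CC\subset\MM$ is \emph{open}: the requirement $u(\partial\D)\subset S^3\setminus K$ excludes the closed set $K$ and is therefore an open condition, while the prescribed homotopy class of $u|_{\partial\D}$ is locally constant under $C^0$-small perturbations, hence $W^{1,p}$-open by the Sobolev embedding $W^{1,p}\hookrightarrow C^0$ for $p>2$. Thus $\CC$ is a Banach manifold.

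For $\BB$, write $\tilde H\co S^3\to[-1,1]$ for the function $H\circ\varphi^{-1}$, whose level sets in $S^3$ are exactly the spheres $\tilde S^t$. Since $\varphi$ fixes $K$ pointwise, the two critical points of $\tilde H$ are the poles $(0,0,0,\pm1)\in K$, so $d\tilde H|_{TS^3}$ is nowhere zero on $S^3\setminus K$. I would then consider the map
\[ \Phi\co\CC\lra W^{1-1/p,p}(\partial\D,\R),\qquad \Phi(u)=\tilde H\circ (u|_{\partial\D}), \]
obtained by composing the bounded trace operator $u\mapsto u|_{\partial\D}$ with the Nemytskii operator induced by $\tilde H$; the latter is smooth because $W^{1-1/p,p}(\partial\D)\hookrightarrow C^0$ for $p>2$ (indeed $(1-1/p)p=p-1>1$). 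By the very definition of $\BB$ we have $\BB=\Phi^{-1}(\R)$, where $\R$ denotes the closed, $1$-dimensional, hence complemented, subspace of constant functions.

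To conclude I would verify that $\Phi$ is a submersion. Its differential at $u_0$ is $\eta\mapsto(d\tilde H\circ u_0)(\eta|_{\partial\D})$; since $d\tilde H|_{T_pS^3}$ is surjective for every $p\in u_0(\partial\D)$, any datum $g\in W^{1-1/p,p}(\partial\D,\R)$ is realised by a $TS^3$-valued trace $\zeta$ with $d\tilde H(\zeta)=g$, and $\zeta$ extends to a section $\eta\in T_{u_0}\CC$ by the bounded extension operator (the interior values of $\eta$ being unconstrained). Hence $d\Phi_{u_0}$ is onto, a submersion is transverse to any closed complemented subspace, and the preimage $\BB=\Phi^{-1}(\R)$ is a Banach submanifold of $\CC$, with $T_{u_0}\BB=(d\Phi_{u_0})^{-1}(\R)$. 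The main obstacle is precisely this analytic input: establishing that $\Phi$ is smooth (smoothness of the composition operator on the fractional trace space $W^{1-1/p,p}(\partial\D)$) and genuinely submersive (surjectivity of the trace–extension operator together with $d\tilde H|_{TS^3}\neq0$ on $S^3\setminus K$). Once these standard facts are in place, both manifold structures follow formally.
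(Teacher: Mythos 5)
Your proposal is correct in outline and shares the paper's skeleton: $\CC$ is handled exactly as in the paper's proof (exponential charts for a metric making $S^3$ totally geodesic, then openness of the two conditions via the embedding $W^{1,p}\hookrightarrow C^0$), and $\BB$ is realised as the preimage of the constant functions under the level map $u\mapsto\widetilde{H}\circ u|_{\partial\D}$. Where you genuinely diverge is in the functional-analytic implementation of this second step. The paper deliberately takes the target to be $C^0(\partial\D,(-1,1))$ --- it even warns that a $W^{1,p}$-function loses derivatives under restriction to the boundary --- so that smoothness of the map $\Psi$ is immediate; the price is that transversality must then be proved by hand: surjectivity of $T_u\Psi$ is obtained by solving a Dirichlet problem via Poisson's formula, and the splitting of the kernel by exhibiting an explicit complement $\HH$ (solutions of that boundary value problem with $f_0(1)=0$) and proving its closedness by elliptic regularity. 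You take the target to be the trace space $W^{1-1/p,p}(\partial\D,\R)$, where the trade-off is reversed: surjectivity and splitting come essentially for free, because your recipe $g\mapsto\eta$ (multiply $g$ by the fixed field $\bigl(\nabla\widetilde{H}/\|\nabla\widetilde{H}\|^2\bigr)\circ u_0|_{\partial\D}$, then apply a bounded extension operator) is a bounded \emph{linear} right inverse $R$ of $d\Phi_{u_0}$, so that $R\circ d\Phi_{u_0}$ is a bounded projection with kernel $\ker d\Phi_{u_0}$. Your route also has the merit that the extension step is exact --- the existence of a bounded right inverse of the trace operator is a theorem of trace theory --- whereas the paper's harmonic extension of merely continuous boundary data requires care to land in $W^{1,p}(\D)$ at all; in this respect your formulation is arguably the more robust one.

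Two points need tightening. First, and this is the substantive one, your justification for the smoothness of the composition operator $v\mapsto\widetilde{H}\circ v$ on $W^{1-1/p,p}(\partial\D)$ is too thin: the embedding into $C^0$ controls sup-norms but says nothing about the fractional Sobolev norm of $\widetilde{H}\circ v$, so it does not by itself give even boundedness of the Nemytskii operator, let alone smoothness. What makes the claim true is that $(1-1/p)\,p=p-1>1$ places you in the supercritical regime, so $W^{1-1/p,p}(\partial\D)$ is a Banach algebra and composition with smooth functions obeys Moser-type estimates; smoothness then follows from the algebra property by the usual Taylor-remainder argument, or from the literature on superposition operators in fractional Sobolev and Besov spaces (Runst--Sickel). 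You flag this as the main obstacle, which is the right instinct, but the reason you give for it would not survive scrutiny, and this estimate is precisely the work that the paper's choice of $C^0$-target is designed to avoid. Second, a smaller point: in the Banach category ``submersion'' means surjective differential \emph{with split kernel}; you never address the splitting, so you should say explicitly that it follows from the boundedness and linearity in $g$ of your right inverse, as sketched above --- this is the exact counterpart of the paper's space $\HH$ and its closedness argument.
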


\begin{proof}
We begin with $\CC$. Here we just
give the main points; for more details see~\cite{zehm03}.
First we consider $W^{1,p}$-maps $u\co
(\D ,\partial\D )\ra (\R^4,S^3)$. Choose a metric on $\R^4$
for which $S^3$ is totally geodesic. Then the composition of
a section of $u^*(T\R^4,TS^3)$ with the exponential map for
this metric will be a map $(\D ,\partial\D )\ra (\R^4, S^3)$.
This can be used to show that we obtain a Banach manifold
modelled on the tangent space $T_u\CC$ of $W^{1,p}$-sections
of $u^*(T\R^4,TS^3)$.

Given $u\in\CC$, the set $S^3\setminus K$
is an open neighbourhood of $u(\partial\D )$ in $S^3$. It then follows
from the Sobolev embedding theorem that the space of maps
$(\D ,\partial\D)\ra (\R^4, S^3\setminus K)$
is an open subset of the previous Banach manifold. The space $\CC$,
where we impose an additional homotopical condition, consists
of a connected component of this open subset.

We now turn to $\BB$.
Recall that the $2$-spheres $S^t\subset S^3$ are the level sets
of the function~$H$, given by projection onto the $y_2$-coordinate.
Write $\widetilde{H}:=H\circ\varphi^{-1}$; then the spheres
$\tilde{S}^t=\varphi (S^t)$ are the level sets
of~$\widetilde{H}$. This means that $\BB$ is the inverse
image of the set of constant functions under the smooth map
\[ \begin{array}{rccc}
\Psi\co & \CC & \lra        & C^0(\partial\D , (-1,1))\\
       & u   & \longmapsto & \widetilde{H}\circ u|_{\partial\D}.
\end{array} \]
(Beware that a $W^{k,p}$-function loses derivatives, in general,
when restricted to the boundary, see~\cite[p.~522]{mcsa04}.)

In order to prove that $\BB$ is a submanifold of~$\CC$, we need to
verify that $\Psi$ is transverse to the subspace of constant functions
in $C^0(\partial\D ,(-1,1))$. Write $\R$ for the tangent space to
this subspace. Then the condition to be verified is that
for $u\in\BB$ the composite map
\[ \Psi'\co T_u\CC\stackrel{T_u\Psi}{\lra}
T_{\Psi (u)}C^0(\partial\D ,(-1,1))
=C^0(\partial\D ,\R)\lra
C^0(\partial\D ,\R)/\R \]
is surjective and its kernel splits, cf.~\cite[Proposition~II.2.4]{lang99}.

\vspace{1mm}

(i) We show that $T_u\Psi$ is surjective (for all
$u\in\CC$); this obviously implies surjectivity
of~$\Psi'$. Choose an auxiliary Riemannian
metric $\langle\, .\, ,.\,\rangle$ on~$S^3$. Then the differential
$T_u\Psi$ sends a tangent vector $\eta\in T_u\CC$ to
\[ T_u\Psi (\eta )=
\langle\nabla\widetilde{H}\circ u|_{\partial\D},
\eta|_{\partial\D}\rangle ,\]
which is a real-valued function on~$\partial\D$.

Let $f_0\in C^0(\partial\D ,\R )$ be given.
Poisson's formula for the ball, cf.~\cite{evan98},
gives the unique solution $\eta_0$ of the boundary value problem
\[ \Delta \eta_0 =0,\;\;\; \eta_0 |_{\partial\D}=f_0
\cdot \frac{\nabla\widetilde{H}}{\|\nabla
\widetilde{H}\|^2}\circ u|_{\partial\D},\tag{P}\]
with $\eta_0\co\D\ra\R^4$ continuous on $\D$ and smooth in the
interior of~$\D$. Then $\eta_0\in T_u\CC$ and
$T_u\Psi (\eta_0 )=f_0$.

\vspace{1mm}

(ii) Define $\HH\subset T_u\CC$ as the space of solutions of the boundary
value problem (P) corresponding to functions
$f_0\in C^0(\partial\D,\R)$ with $f_0(1)=0$. We claim that $\HH$ is
complementary to $\ker\Psi'$ in the sense of Banach spaces, i.e.\
$\HH$ is closed in $T_u\CC$, and $\ker\Psi'\oplus\HH =T_u\CC$.

Given $\eta\in T_u\CC$, define $f=T_u\Psi(\eta )\in C^0(\partial\D,\R)$.
Let $\eta_0\in\HH$ be the solution of (P) for
$f_0=f-f(1)$. Then $\eta$ obviously decomposes as $\eta =
(\eta -\eta_0)+\eta_0$, and $\eta-\eta_0\in\ker\Psi'$, since
\[ T_u\Psi (\eta-\eta_0)=f-(f-f(1))=f(1)\]
is a constant function. This shows that $T_u\Psi =\ker\Psi'+\HH$.

If $\eta\in\ker\Psi'\cap\HH$, then $T_u\Psi (\eta )$ is
a constant function on $\partial\D$ which takes the value~$0$
at~$1$, so it is identically zero. By the uniqueness of the
solution of (P) this forces $\eta_0=0$. So we
have a direct sum decomposition of $T_u\Psi$.

It remains to show that $\HH$ is a closed subspace of $T_u\CC$.
Let $(\eta_{\nu})$ be a sequence in $\HH$ converging in
the $W^{1,p}$-norm to some $\eta_0\in T_u\CC$. By elliptic
regularity~\cite[Theorem~B.3.1]{mcsa04},
$(\eta_{\nu})$ is a $W^{k,p}_{\loc}$-Cauchy sequence on the interior
of~$\D$, and by the Sobolev embedding theorem
a $C^k_{\loc}$-Cauchy sequence for all $k\in\N$. It follows
that $\eta_0$ is continuous on $\D$, smooth on the
interior of~$\D$, and it solves the boundary value problem
(P) for a suitable
$f_0\in C^0(\partial\D,\R)$ with $f_0(1)=0$. This means that
$\eta_0\in\HH$, so $\HH$ is closed.
\end{proof}

In order to prove that $\widetilde{\WW}_{\varphi}\subset\BB$
is a $5$-dimensional submanifold, we need to show that
the linearisation $D_u$ of $\DB$ at $u\in\widetilde{\WW}_{\varphi}\subset\BB$
is a surjective Fredholm
operator with a $5$-dimensional kernel. (The splitting of $\ker D_u$
is then given by a closed subspace isomorphic to the
image of~$D_u$.) For the proof
of Proposition~\ref{prop:F-delta} we also have to exhibit
explicit generators of this kernel. Since these are local statements,
we first introduce a suitable local chart around a given
Bishop disc $u\in\widetilde{\WW}_{\varphi}\subset\BB$
of level~$t$. For an open neighbourhood
$U$ of $u(\D)$ in $\C^2$, the set
$\{w\in\BB\co w(\D)\subset U\}$ is, by
the Sobolev embedding theorem, an open neighbourhood of $u$
in~$\BB$. Our aim is to find a neighbourhood $U$
and a chart $\iota\co U\lra\C\times\C$ adapted to our Fredholm
problem.

To this end, choose a frame $e_1,e_2$ of $T\tilde{S}^t$ along
$u(\partial\D )$ with $e_1$ tangent to $u(\partial\D )$,
with $e_2$ tangent to the characteristic foliation
$\tilde{S}^t_{\xi}$, and with $J_0e_2$ --- which is tangent to
$S^3$ but transverse to $\tilde{S}^t$ --- pointing in the
direction of increasing~$\widetilde{H}$. Choose a Riemannian metric
on $\R^4$ which makes $e_1,J_0e_1,e_2,J_0e_2$ an orthonormal
frame along $u(\partial\D )$. Set $e_2'=e_2/\|\nabla\widetilde{H}\|$,
where the gradient and norm are taken relative to the
restriction of the chosen metric to~$S^3$.
Notice that $J_0e_2'$ equals
$\nabla\widetilde{H}/\|\nabla\widetilde{H}\|^2$,
since both vector fields are tangent to~$S^3$,
orthogonal to $\tilde{S}^t$ pointing in the same direction, and have
the same length.
We may extend $u$ to an embedding defined on
a small neighbourhood of $\D$ in~$\C$. Then we find an open neighbourhood
of $\D\times\{ 0\}$ in $\C\times\C$ on which the map
\[ \begin{array}{ccc}
\C\times\C     & \lra        & \C^2 \\
(z_1,x_2+iy_2) & \longmapsto & \exp_{u(z_1)}
                               \bigl( x_2e_2'+y_2J_0e_2'\bigl)
\end{array}\]
defines an embedding. Let $\iota$ be the inverse map,
defined on the image of that embedding. We summarise the properties
of $\iota$, which are obvious from the
construction, in the following lemma.
 
\begin{lem}
\label{lem:chart}
There is a neighbourhood $U$ of $u(\D)\subset\R^4$ and an
embedding $\iota\co U\ra\C\times\C$
with the following properties:
\begin{itemize}
\item[(1)] $\iota (u(z))=(z,0)$ for all $z\in\D$.
\item[(2)] $\iota_*J_0:= T\iota\circ J_0\circ T\iota^{-1}=J_0$
      on $T\R^4|_{\D\times\{0\}}$.
\item[(3)] For each $z\in\partial\D$, the differential $T_{u(z)}\iota$ sends
      the tangent direction along the leaf of the characteristic
      foliation $\tilde{S}^t_{\xi}$ through $u(z)$ to
      $\{ 0\}\times\R\subset T_{(z,0)}\C^2$.
\item[(4)] For each $z\in\partial\D$, the differential $T_{u(z)}\iota$ sends
      $\nabla\widetilde{H}/\|\nabla\widetilde{H}\|^2$ to
      $(0,i)\subset T_{(z,0)}\C^2$.\hfill\qed
\end{itemize}
\end{lem}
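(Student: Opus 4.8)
The plan is to read off the four properties directly from the defining formula for $\iota^{-1}$, which I denote $\Phi(z_1,x_2+iy_2)=\exp_{u(z_1)}(x_2e_2'+y_2J_0e_2')$, by differentiating $\Phi$ along the central disc $\D\times\{0\}$. Property~(1) is immediate, since $\Phi(z,0)=\exp_{u(z)}(0)=u(z)$, so $\iota(u(z))=(z,0)$. This simultaneously shows that $\Phi$ restricts to the embedding $u$ on $\D\times\{0\}$; to know that $\iota$ is a bona fide chart I would first confirm, via the inverse function theorem, that $\Phi$ is an embedding near $\D\times\{0\}$. Here $d\Phi$ sends the $z_1$-directions to $Tu(\D)$ and the $x_2,y_2$-directions to $e_2',J_0e_2'$, and along $\partial\D$ these four vectors are precisely the orthonormal frame $e_1,J_0e_1,e_2,J_0e_2$ (using that $Tu(\D)$ is the complex line spanned by $e_1,J_0e_1$, as $u$ is holomorphic), hence linearly independent; one extends $e_2'$ into the interior keeping $e_2',J_0e_2'$ transverse to $Tu(\D)$, an open condition satisfied near the boundary.

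For property~(2) I would compute $d\Phi$ along $\D\times\{0\}$ factorwise. On the first factor $d\Phi$ equals $du$, which is complex linear because $u$ is holomorphic up to the boundary; on the second factor $d\Phi$ sends $\partial_{x_2}\mapsto e_2'$ and $\partial_{y_2}\mapsto J_0e_2'$, a $J_0$-complex pair. Since the complex structure of the domain $\C\times\C$ preserves the product splitting and $d\Phi$ is linear, it intertwines the domain complex structure with $J_0$ along $\D\times\{0\}$, which is exactly $\iota_*J_0=J_0$ there.

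Properties~(3) and~(4) are statements along $\partial\D$. For~(3), the vector $e_2$, and hence its positive multiple $e_2'$, is tangent to the characteristic foliation $\tilde{S}^t_{\xi}$ by construction, and $d\Phi(\partial_{x_2})=e_2'$ at each $(z,0)$ with $z\in\partial\D$, so $T\iota$ carries the leaf direction to $\{0\}\times\R$. Property~(4) rests on the single identity $J_0e_2'=\nabla\widetilde{H}/\|\nabla\widetilde{H}\|^2$, after which it follows from $d\Phi(\partial_{y_2})=J_0e_2'$. I would establish the identity by comparing both sides as vectors tangent to $S^3$: each is orthogonal to $\tilde{S}^t$ inside $S^3$ and points toward increasing $\widetilde{H}$ --- for $J_0e_2$ because $e_2\in\xi$ and $\xi$ is a $J_0$-invariant line in $TS^3$, so $J_0e_2\in\xi\subset TS^3$ is orthogonal to $e_1,e_2$, while $\nabla\widetilde{H}$ is normal to the level set $\tilde{S}^t$ by definition. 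Their lengths agree as well: along $u(\partial\D)$ the chosen metric is $J_0$-Hermitian, because $J_0$ permutes the orthonormal frame $e_1,J_0e_1,e_2,J_0e_2$ up to sign, whence $\|J_0e_2'\|=\|e_2'\|=1/\|\nabla\widetilde{H}\|$, which is also the norm of $\nabla\widetilde{H}/\|\nabla\widetilde{H}\|^2$.

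The main obstacle is not any individual computation but bookkeeping: one must keep straight which assertions hold only along the boundary circle $u(\partial\D)$---where the metric is Hermitian and the frame is adapted to $\tilde{S}^t_{\xi}$---and which hold on all of $\D\times\{0\}$, and one must check that the arbitrary extension of $e_2'$ into the interior affects neither the embedding property nor~(2); it does not, since~(2) uses only the $\C$-linearity of $d\Phi$ on each factor.
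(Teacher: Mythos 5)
Your construction is the same as the paper's: the paper defines exactly the map $(z_1,x_2+iy_2)\mapsto\exp_{u(z_1)}\bigl(x_2e_2'+y_2J_0e_2'\bigr)$, declares it an embedding near $\D\times\{0\}$, and treats the four properties as obvious from the construction. Your verifications of (1), (2), (3), and of the key identity $J_0e_2'=\nabla\widetilde{H}/\|\nabla\widetilde{H}\|^2$ underlying (4), are correct and match the paper's intent.

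There is, however, one genuine gap, and it sits at the only point of the construction that needs a real argument: the extension of $e_2'$ from $u(\partial\D)$ to a vector field along all of $u(\D)$ that remains transverse to the disc. You justify this by saying transversality is ``an open condition satisfied near the boundary'', and later you even call the extension ``arbitrary''. Openness only gives transversality on a collar of $\partial\D$; whether a transverse extension exists over the \emph{whole} disc is a global topological question. Concretely, the normal bundle $N=u^*T\C^2/du(T\D)$ is a complex line bundle over $\D$, and $e_2'$ induces a nonvanishing section of $N$ over $\partial\D$; such a boundary section extends to a nonvanishing section over $\D$ if and only if its winding number relative to a trivialisation of $N$ vanishes. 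If that winding number were nonzero, no chart with properties (1)--(4) could exist at all, because $T\iota^{-1}(0,1)$ would be precisely such a transverse extension. The vanishing is where Proposition~\ref{prop:mu2} must enter: by the direct sum axiom for the Maslov index, $\mu(A^t)=2$ splits as the index of the tangential pair $\bigl(du(T\D),du(T\partial\D)\bigr)$, which is $2$ (the boundary condition $iz\R$), plus the index of the normal pair $(N,N_{\R})$; hence the normal pair has Maslov index $0$, the winding number is $0$, and the transverse extension of $e_2'$ exists (after correcting by a tangential part so that the boundary values are exactly $e_2'$). Your proof never invokes $\mu(A^t)=2$, so this step cannot be completed as written; and with a truly arbitrary extension, $\Phi$ need not be an immersion at interior points of $\D\times\{0\}$, so it would fail to be an embedding of a neighbourhood of the whole disc --- which is what the lemma, and its later use in transporting the Fredholm problem to $\D\times\{0\}$, requires.
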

  
Thanks to this lemma we may now assume that
$u$ is the inclusion $\D\ra\D\times\{ 0\}\subset\C\times\C$,
at the cost of replacing $J_0$ by a complex structure
$J$ that varies from point to point, but
coincides with $J_0$ along $\D\times\{ 0\}$.

So the linearisation $D_u$ of the Cauchy--Riemann operator
$\CR{J}=\partial_x+J\partial_y$ at the point $u\in
\widetilde{\WW}_{\varphi}\subset\BB$ is given by
\[\begin{array}{rccc}
D_u\co & T_u\BB & \lra        & L^p(\D ,\R^4)\\
       & \eta   & \longmapsto & \DB\eta
                                +(\partial_{\eta}J)\partial_yu.
\end{array}\]
This is a real linear Cauchy--Riemann operator in the sense of
\cite[Appendix~C]{mcsa04} and so is Fredholm.

\begin{prop}
\label{prop:sigma-tau}
The operator $D_u$ is onto and has a $5$-dimensional kernel. This kernel is
spanned by $du(\aut (\D ))$, where $\aut (\D )$ denotes the $3$-dimensional
Lie algebra of infinitesimal holomorphic automorphisms of~$\D$,
and two  smooth sections $\sigma ,\tau$ of $u^*(T\C^2,T\tilde{S}^t)$
and $u^*(T\C^2,TS^3)$, respectively, with
the following properties:
\begin{itemize}
\item[(1)] For $z\in\{ 1,i,-1\}$, the vector
      $\sigma (z)\in T_{u(z)}\tilde{S}^t$,
      is tangent to the characteristic foliation $\tilde{S}^t_{\xi}$.
\item[(2)] The component of the vector field $\tau|_{\partial\D}$ 
      orthogonal to $T\tilde{S}^t$ is a positive constant multiple
      of $\nabla\widetilde{H}/\|\nabla\widetilde{H}\|^2$.
      For $u\in\WW\subset\widetilde{\WW}$ we may assume in addition
      that for $k=0,1,2$ the vector $\tau (i^k)\in T_{u(i^k)}S^3$ is
      tangent to $\tilde{Q}_k$.
\item[(3)] For each $z\in\D$ the vectors $\sigma (z)$ and $\tau (z)$
      span the complement of $du(T_z\D)$ in $T_{u(z)}\R^4$. 
\end{itemize}
\end{prop}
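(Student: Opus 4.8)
The plan is to prove this by \emph{automatic transversality} in the style of Gromov~\cite{grom85} and Hofer--Lizan--Sikorav~\cite{hls97}: I would split the operator $D_u$ along the embedded disc $u$ into a tangential and a normal part, treat the fixed-level problem first, and then account for the single extra deformation direction coming from varying the level~$t$. In the chart of Lemma~\ref{lem:chart} the disc $u$ is the inclusion $\D\hookrightarrow\D\times\{0\}$ and $J$ agrees with $J_0$ along the image, so $du(T\D)$ is a complex line subbundle of $u^*T\C^2$ and, since $u$ is embedded (Proposition~\ref{prop:embedded}), it admits a complex normal complement $N_u:=u^*T\C^2/du(T\D)$. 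I would record the short exact sequence of bundle pairs
\[ 0\lra\bigl(du(T\D),F^T\bigr)\lra\bigl(u^*T\C^2,T\tilde{S}^t\bigr)
\lra\bigl(N_u,F^N\bigr)\lra 0, \]
where along $\partial\D$ the subbundle $F^T$ is tangent to $u(\partial\D)$ and the quotient condition $F^N$ is the image of the characteristic direction. In the chart one reads off $F^T_z=\R(iz,0)$, which winds with Maslov number~$2$, and $F^N_z=\R(0,1)$, which is constant; additivity of $\mu$ together with $\mu(A^t)=2$ (Proposition~\ref{prop:mu2}) then gives $\mu(F^T)=2$ and $\mu(F^N)=0$.

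Next I would set $V:=\{\eta\in T_u\BB\co\eta|_{\partial\D}\in T\tilde{S}^t\}$, the fixed-level subspace with honest totally real boundary condition. By the Riemann--Roch theorem for real linear Cauchy--Riemann operators with totally real boundary, see~\cite[Appendix~C]{mcsa04}, $D_u|_V$ is Fredholm of index $n+\mu(A^t)=2+2=4$. The short exact sequence above induces operators on the two line bundles, and I would invoke the standard surjectivity statement for line-bundle Cauchy--Riemann operators over the disc: such an operator is onto as soon as its Maslov index exceeds $-2$, with kernel of dimension $\mu+1$. The tangential operator has $\mu=2$, hence is onto with a $3$-dimensional kernel; since the holomorphic reparametrisations furnish $du(\aut(\D))\subset\ker D_u$ of exactly this dimension, these span the tangential kernel. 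The normal operator has $\mu=0$, hence is onto with a $1$-dimensional kernel generated by a \emph{nowhere vanishing} section; lifting this to a section $\sigma$ of $u^*(T\C^2,T\tilde{S}^t)$ in $\ker D_u|_V$ and adjusting its tangential part by an element of $du(\aut(\D))$ (using Lemma~\ref{lem:ev}) makes $\sigma(i^k)$ purely characteristic, giving property~(1). The long exact sequence then shows $D_u|_V$ is onto with $\ker D_u|_V=du(\aut(\D))\oplus\langle\sigma\rangle$ of dimension~$4$.

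Finally I would pass to the full space $T_u\BB$. Here $V$ is a closed subspace with $T_u\BB/V\cong\R$, the isomorphism being the constant value of $\langle\nabla\widetilde{H},\eta|_{\partial\D}\rangle$; thus $D_u$ on $T_u\BB$ is a one-dimensional enlargement of the domain of $D_u|_V$, hence Fredholm of index $4+1=5$, and it is onto because $D_u|_V$ already was. Its kernel is therefore $5$-dimensional and contains the $4$-dimensional fixed-level kernel, so I may pick $\tau\in\ker D_u\setminus V$; then $\langle\nabla\widetilde{H},\tau|_{\partial\D}\rangle$ is a nonzero constant, which after rescaling I take positive. As $\tau|_{\partial\D}\in TS^3$ (being in $T_u\CC$) and $\nabla\widetilde{H}/\|\nabla\widetilde{H}\|^2$ spans the direction in $TS^3$ normal to $T\tilde{S}^t$, this yields the first part of property~(2); the refinement $\tau(i^k)\in T\tilde{Q}_k$ is arranged by adding to $\tau$ a suitable element of $du(\aut(\D))$, whose boundary values realise arbitrary tangential displacements at the three marked points by Lemma~\ref{lem:ev}. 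Property~(3) is immediate in the chart, since $\sigma$ and $\tau$ span the second $\C$-factor---carrying the characteristic direction $(0,1)$ and its image $(0,i)$ under $J_0$---which is the complement of $du(T_z\D)$.

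The hard part is the automatic transversality, that is, the surjectivity of the normal operator. This rests on the Carleman similarity principle: a nonzero element of its kernel, or of the kernel of the formal adjoint, behaves like a holomorphic section, so its zeros are isolated with positive local winding, and the total winding is pinned down by the Maslov index. For $\mu(F^N)=0$ a nonzero kernel element must be nowhere zero, while the adjoint---whose effective Maslov index is negative---can have no nonzero sections, so the cokernel vanishes. I expect the most delicate book-keeping to be the verification that the tangent/normal splitting is genuinely respected by $D_u$ up to lower-order terms, so that the line-bundle surjectivity results of~\cite[Appendix~C]{mcsa04} transfer through the long exact sequence of the bundle pairs.
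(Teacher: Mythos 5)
Most of your proposal is the paper's own argument in different packaging: the paper also works in the chart of Lemma~\ref{lem:chart}, where $D_u$ takes the explicit upper-triangular form $D_u(\eta^{\|},\eta^{\perp})=(\DB\eta^{\|}+A\eta^{\perp},(\DB+B)\eta^{\perp})$ --- precisely your tangential/normal splitting --- and it likewise runs line-bundle Riemann--Roch on the Maslov-index-$2$ tangential problem and the Maslov-index-$0$ normal problem, treats the fixed-level space first, and then adds the one-dimensional level-shifting direction. Your steps for surjectivity, for $du(\aut(\D))$ exhausting the tangential kernel, for the existence of $\sigma$ and $\tau$, and for properties (1) and (2) all go through and correspond to the paper's steps (i)--(iv).

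But there is a genuine gap at property (3), which you dismiss as ``immediate in the chart''. Your reasoning --- that $\sigma$ and $\tau$ carry the directions $(0,1)$ and $(0,i)$ of the second $\C$-factor --- is valid only on $\partial\D$, where $\sigma^{\perp}$ is real and nowhere zero while $\tau^{\perp}$ has constant nonzero imaginary part. At an interior point $z$ nothing you have established prevents $\sigma^{\perp}(z)$ and $\tau^{\perp}(z)$ from being $\R$-proportional, or $\tau^{\perp}(z)$ from vanishing outright. Your automatic-transversality argument yields nowhere-vanishing only for elements of the \emph{fixed-level} normal kernel, i.e.\ for multiples of $\sigma^{\perp}$; a combination $\lambda_0\sigma^{\perp}+\lambda_1\tau^{\perp}$ with $\lambda_1\neq 0$ satisfies the affine boundary condition ``imaginary part equal to a nonzero constant'', which is \emph{not} a totally real boundary condition, so the $\mu=0$ winding argument does not apply to it. What is needed --- and what the paper's step (v) supplies --- is the following: write such a combination via the Carleman similarity principle as $e^{g}f$ with $f$ holomorphic and $g$ real on $\partial\D$; then $\im f=\lambda_1\,e^{-g}$ (up to a positive constant) has constant sign on $\partial\D$, so the mean value property (equivalently, the argument principle) shows $f$ has no zero unless $\lambda_1=0$; and if $\lambda_1=0$, the maximum principle for the harmonic function $\im f$ forces $f$ to be a real constant, whence a zero anywhere gives $f\equiv 0$ and $\lambda_0=0$. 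This is a short argument, but it is the crux of (3), and (3) is exactly what Proposition~\ref{prop:F-delta} consumes to conclude that $F^{\delta}$ is an immersion; it cannot be waved through. (A cosmetic point: your appeals to Lemma~\ref{lem:ev} for adjusting $\sigma$ and $\tau$ at the marked points really rest on the explicit description $\eta^{\|}(z)=a+ibz-\bar{a}z^2$ of $\aut(\D)$, which lets one prescribe values in $i z\R$ at the three points $1,i,-1$ uniquely; that is how the paper phrases it.)
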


\begin{rems}
(1) The notation $\sigma$ and $\tau$ is meant to be mnemonic.
The component of the vector field $\sigma|_{\partial\D}$
orthogonal to $u(\partial\D )$ points along a leaf of the characteristic
foliation $\tilde{S}^t_{\xi}$ in the direction of the positive $s$-parameter;
the vector field $\tau|_{\partial\D}$ corresponds to an infinitesimal
shift of $t$-level spheres.

(2) Conditions (1) and (2) in the proposition guarantee that
for $u\in\WW$ the tangent vectors $\sigma$ and
$\tau$ lie in the kernel of the differential of the evaluation map
$\ev_{1,i,-1}$, since
\[ T_u\ev_{1,i,-1}(\eta )=(\eta(1),\eta(i),\eta (-1)).\]
So $\sigma$ and $\tau$ may be regarded as
elements of the tangent space $T_u\WW$.
\end{rems}

\begin{proof}[Proof of Proposition~\ref{prop:sigma-tau}]
In our local model around~$u$, the elements $\eta$ of
the tangent space $T_u\BB$ can be written as sections of the form
$(\eta^{\|},\eta^{\perp})\in W^{1,p}(\D ,\C\oplus\C)$.
The condition that $\eta (z)$ be tangent to $S^3$ for
$z\in\partial\D$ translates into
\[ \eta^{\|}(z)\in iz\R \;\;\mbox{\rm for}\;\; z\in\partial\D. \]
The real part of $\eta^{\perp}(z)$ for $z\in\partial\D$ corresponds
to the tangential direction of a leaf of the
characteristic foliation $\tilde{S}^t_{\xi}$;
the imaginary part, to
$\nabla\widetilde{H}/\|\nabla\widetilde{H}\|^2$. The flow of the latter
vector field preserves the level spheres of~$\widetilde{H}$. 
Since the Banach manifold $\BB$ consists of discs whose boundary lies
in such a level sphere, we have the boundary condition
\[ \im\eta^{\perp}(z)=\mbox{\rm const.}\;\;\mbox{\rm for}\;\;
z\in\partial\D. \]

Our real linear Cauchy--Riemann operator $D_u$ now takes the form
\[ \begin{array}{rccc}
D_u\co & T_u\BB                   & \lra        & L^p(\D ,\C\oplus\C )\\
       & (\eta^{\|},\eta^{\perp}) & \longmapsto & (\DB\eta^{\|}+A\eta^{\perp},
                    (\DB+B)\eta^{\perp}),
\end{array} \]
where $A$ and $B$ are smooth maps from $\D$ into
the real endomorphisms of $\C =\R^2$. Elliptic regularity once again
gives us smoothness of the tangent vectors that lie
in $\ker D_u\subset T_u\BB$.

\vspace{1mm}

(i) When the boundary condition on $\eta^{\perp}$ is
strengthened to $\im\eta^{\perp}=0$ for $z\in\partial\D$
--- this corresponds to tangent vectors along the
submanifold of $\BB$ of maps of level~$t$ ---, the vector
$(\eta^{\|}(z),\eta^{\perp}(z))$ lies in the totally
real subspace $iz\R\oplus\R\subset\C\oplus\C$ for each $z\in\partial\D$.
This means we are dealing with a Riemann--Roch boundary value
problem in the sense of~\cite[p.~544]{mcsa04}. The Maslov index
of the bundle pair describing the boundary condition equals~$2$,
so by \cite[Theorem~C.1.10]{mcsa04} the operator $D_u$ is
a surjective Fredholm operator of index~$4$. {\em A fortiori},
$D_u$ is onto when the boundary condition is relaxed to
$\im\eta^{\perp}(z)=\mbox{\rm const.}$ for $z\in\partial\D$.

\vspace{1mm}

(ii) We first consider tangent vectors of the form $(\eta^{\|},0)$.
Such a tangent vector lies in $\ker D_u$ precisely
when $\eta^{\|}$ is holomorphic.
By expanding $\eta^{\|}$ into a power series
on~$\D$, it is easy to see that the condition $\eta^{\|}(z)\in iz\R$
for $z\in\partial\D$ is equivalent to $\eta^{\|}$ being of the
form
\[ \eta^{\|}(z)=a+ibz-\bar{a}z^2\]
for some $a\in\C$ and $b\in\R$, cf.~\cite[p.~75]{hofe99}.
It is a nice calculation to verify that these are indeed
the infinitesimal M\"obius transformations of~$\D$. In other words,
the subspace of $\ker D_u$ of elements of the form $(\eta^{\|},0)$
is the $3$-dimensional space
$\aut(\D)\times\{0\}$.

\vspace{1mm}

(iii) If $(\eta^{\|},\eta^{\perp})$ is an element of $\ker D_u$,
then in particular $(\DB+B)\eta^{\perp}=0$, and for all $z\in
\partial\D$ we have $\eta^{\perp}(z)=ci$ for some real constant~$c$;
we call such an $\eta^{\perp}$ {\bf admissible}.
Any admissible $\eta^{\perp}$ can be written as $\eta^{\perp}=w+ci$, where
$w$ is a smooth solution of the Riemann problem
\[ (\DB+B)w=-cBi,\;\;\; w(z)\in\R\;\;\mbox{\rm for}\;\; z\in\partial\D.
\tag{$\mathrm{R}_c$}\]
(Beware that on the right-hand side of this equation,
$B$ is a real endomorphism acting on the vector $i\in\C=\R^2$.)
Conversely, any solution $w$ of $(\mathrm{R}_c)$ gives rise to an admissible
$\eta^{\perp}=w+ci$.

The boundary condition in $(\mathrm{R}_c)$ defines a bundle pair of
Maslov index~$0$,
so by the Riemann--Roch theorem~\cite[Theorem~C.1.10]{mcsa04}
the operator $\DB +B$ is a surjective Fredholm operator of index~$1$.
Choose a non-trivial solution $w_0$ of $(\mathrm{R}_0)$ and any solution
$w_1$ of~$(\mathrm{R}_1)$. If $w_c$ is a solution of $(\mathrm{R}_c)$, then
$w_c-cw_1$ is a solution of~$(\mathrm{R}_0)$. It follows that the space of
pairs $(w_c,c)$ with $w_c$ a solution of $(\mathrm{R}_c)$ is $2$-dimensional,
spanned by $(w_0,0)$ and $(w_1,1)$.
We conclude that the space of admissible $\eta^{\perp}$ is spanned
by $\eta_0^{\perp}:=w_0$ and $\eta_1^{\perp}:=w_1+i$.

\vspace{1mm}

(iv) Next we want to show that any admissible $\eta^{\perp}$
is in fact the component of a tangent vector $(\eta^{\|},\eta^{\perp})$
that lies in $\ker D_u$. Thus, given an admissible~$\eta^{\perp}$,
we are now looking for a solution $\eta^{\|}\co\D\ra\C$
of the equation $\DB\eta^{\|}=-A\eta^{\perp}$, with $\eta^{\|}(z)\in iz\R$
for all $z\in\partial\D$. This boundary condition defines
a bundle pair of Maslov index~$2$, so again by the Riemann--Roch
theorem~\cite[Theorem~C.1.10]{mcsa04} the operator $\DB$
is a surjective Fredholm operator of index~$3$. By (ii) we find
a unique solution $\eta^{\|}$ with prescribed values in
$iz\R$ at the points $z=1,i,-1$.

Starting with $\eta_0^{\perp}$ or $\eta_1^{\perp}$ from (iii)
we obtain the tangent vectors $\sigma$ and $\tau$ that satisfy
conditions (1) and (2), respectively, of the proposition.
The $\eta^{\|}$-component of $\sigma$ has to vanish at
the points $1,i,-1$, since the real part of $\eta^{\perp}$
corresponds to the direction of the characteristic foliation
$\tilde{S}^t_{\xi}$ along $\partial\D$; for $u\in\WW$ the
$\eta^{\|}$-component $\eta_1^{\|}$ of $\tau$ is determined by the condition
\[ \eta_1^{\|}(i^k)+\eta_1^{\perp}(i^k)\in T_{u(i^k)}\tilde{Q}_k,\;\;
k=0,1,2.\]

\vspace{1mm}

(v) It remains to verify condition~(3). For this we need to show that
an $\R$-linear combination $\lambda_0w_0+\lambda_1(w_1+i)$ that vanishes
at some point in $\D$ has to be trivial.
The $1$-dimensional Carleman similarity
principle~\cite[Corollary~13]{hofe93} guarantees that any solution
$\eta^{\perp}$ of the equation $(\DB+B)\eta^{\perp}=0$
can be written in the form $\eta^{\perp}=e^gf$,
where $f,g\co\D\ra\C$
are continuous functions with $f$ holomorphic in the interior of~$\D$
and $g(\partial\D )\subset\R$. Now write $\lambda_0w_0+\lambda_1(w_1+i)$
in this form. We need to show that $\lambda_0=\lambda_1=0$
if $f$ has any zeros.

Suppose there is a point $z_0\in\D$
with $f(z_0)=0$. If $z_0\in\partial\D$, then $\lambda_1=0$.
If $z_0$ lies in the interior of~$\D$, we may assume after a
M\"obius transformation of $\D$ that $z_0=0$. The function~$f$,
being continuous on $\D$ and holomorphic on $\Int\D$,
satisfies the mean value property
\[ 0=f(0)=\frac{1}{2\pi}\int_0^{2\pi}f(e^{i\theta})\, d\theta.\]
By considering the imaginary part of this equation we see
once again that $\lambda_1=0$.

This means that the function $\im f$, which is harmonic on the
interior of $\D$ and continuous on~$\D$, is zero on $\partial\D$,
and hence zero on $\D$ by the maximum principle. So the holomorphic
function $f$ is constant, and since it vanishes in~$z_0$
it is the zero function, which implies $\lambda_0=0$.
\end{proof}

\begin{proof}[Proof of Proposition~\ref{prop:ev}]
The moduli space $\WW_{\varphi}^{\delta}$ has been
established as a smooth compact surface.
So the image of the evaluation map $\ev_1$ will likewise be compact.
The differential $T_u\ev_1$ has full rank by the preceding
proposition, since $T_u\ev_1(\eta )=\eta(1)$.
So the image of $\ev_1$ will also be open in $\tilde{Q}^{\delta}$,
which means that $\ev_1$ is in fact surjective. Injectivity of $\ev_1$
follows from Proposition~\ref{prop:embedded}.
\end{proof}

\begin{proof}[Proof of Proposition~\ref{prop:F-delta}]
Since the bidisc $\D\times\D_{1-\delta}$ is a compact manifold, we need
only show that $F^{\delta}$ is an injective immersion. Injectivity follows
from all Bishop discs being embedded and mutually disjoint. That
the differential of $F^{\delta}$ has full rank is an immediate
consequence of Proposition~\ref{prop:sigma-tau}~(3).
\end{proof}
\section{Topological intersection of discs}
\label{section:intersection}
In Section~\ref{section:positivity} below we want to establish
results about the positivity of intersections of holomorphic
discs that are instrumental for proving Proposition~\ref{prop:embedded}.
The present section deals with the topological preliminaries.
The main results in this and the following section can be
found in~\cite{ye98}, but we feel that a more self-contained
presentation is appropriate in our context. Some subtle
issues concerning intersection points accumulating at the boundary
(see Proposition~\ref{prop:distinct-nicely} below)
have not been addressed in the cited paper.
Intersection properties of holomorphic discs are also
discussed in~\cite[Section~4.4]{wend05}.

Let $W$ be an oriented $4$-dimensional manifold with
boundary $M=\partial W$, and $\Sigma\subset M$ an embedded 
oriented surface.
Our aim in this section is to define an intersection pairing
on the relative homotopy group $\pi_2(W,\Sigma )$. Elements
in this group are based homotopy classes of $C^0$-maps
$(\D ,\partial\D )\ra (W,\Sigma )$.

\begin{defn}
Let $\DD$ be the space of $C^0$-maps $(\D ,\partial\D )\ra
(W,\Sigma )$. A disc $u\in\DD$ is called {\bf admissible} if
\begin{itemize}
\item[(1)] $u$ is smooth,
\item[(2)] $u$ maps the interior of $\D$ into the interior of~$W$,
\item[(3)] $u|_{\partial\D}$ is an immersion, and
\item[(4)] $u$ is transverse to~$M$.
\end{itemize}
Write $\AAA\subset\DD$ for the subset of admissible discs.
\end{defn}

Observe that in the case of $W=D^4$, $M=S^3$, and $\Sigma\subset S^3$
a totally real surface, any {\em holomorphic} disc $u\in\DD$ is admissible
thanks to regularity and the maximum principle.

The following is a consequence of standard approximation and general
position results in differential topology,
see~\cite[Chapter~2]{hirs76}.

\begin{prop}
\label{prop:admissible}
The subset of admissible discs $\AAA$ is dense in~$\DD$.\hfill\qed
\end{prop}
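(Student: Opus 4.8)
The plan is to show that every $C^0$-map $u\co(\D,\partial\D)\to(W,\Sigma)$ can be $C^0$-approximated by an admissible one, i.e.\ by a disc satisfying conditions (1)--(4) of the definition. Since admissibility is cut out by finitely many open and generic conditions, the natural strategy is to enforce them one at a time, each time by an arbitrarily small perturbation, and to observe that once a condition is achieved it persists under all sufficiently small further perturbations.

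First I would arrange smoothness (condition (1)). Since $(W,\Sigma)$ is a smooth pair, the Weierstrass approximation theorem (or a partition-of-unity smoothing argument, cf.\ \cite[Chapter~2]{hirs76}) lets me replace $u$ by a smooth map $C^0$-close to it; by keeping the approximation close to $\Sigma$ along $\partial\D$ and then composing with a retraction of a collar of $\Sigma$, I can ensure the perturbed map still sends $\partial\D$ into $\Sigma$. Next, for condition (2), I note that $u(\Int\D)$ may a priori touch $M=\partial W$. Using a boundary collar $M\times[0,\varepsilon)$ of $W$, I would push the interior slightly inward by subtracting a small bump in the inward normal direction, supported away from $\partial\D$, so that $u(\Int\D)\subset\Int W$; this is again a $C^0$-small smooth perturbation and, being an open condition, is stable.

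For conditions (3) and (4) I would invoke genericity. Condition (4), transversality of $u$ to the hypersurface $M$, is achieved for a generic smooth map by the Thom transversality theorem (parametric transversality), so an arbitrarily small smooth perturbation makes $u\pitchfork M$; since transversality to $M$ is an open condition on the space of smooth maps, it is preserved under the remaining perturbations. Condition (3), that $u|_{\partial\D}$ be an immersion, is likewise generic: a map from the $1$-manifold $\partial\D$ into $M$ is generically an immersion (the non-immersive jets form a positive-codimension subset of the $1$-jet space), so a small perturbation of $u$ supported near $\partial\D$ removes the critical points of the restriction. Here I must take care that the perturbation used for (3) is tangent to $\Sigma$ along $\partial\D$, so as not to disturb the boundary condition $u(\partial\D)\subset\Sigma$; this is arranged by perturbing within $\Sigma$ near the boundary.

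The main obstacle is bookkeeping rather than any deep idea: I must order the perturbations so that achieving a later condition does not destroy an earlier one, which is guaranteed precisely because (2) and (4) are \emph{open} conditions (stable under small perturbations) while (3) is generic and can be realized by a perturbation as small as I like. Concretely, I would fix smoothness first, then transversality and the interior condition (both open), and finally the immersion condition along the boundary by a perturbation supported in an arbitrarily thin collar of $\partial\D$ and tangent to $\Sigma$ there, small enough not to violate the already-open conditions. All of this is standard and is exactly the content cited from \cite[Chapter~2]{hirs76}, so the statement follows.
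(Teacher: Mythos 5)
Your strategy is the one the paper intends --- the paper itself offers no argument beyond the citation of \cite[Chapter~2]{hirs76} --- but one of your steps, as stated, genuinely fails. To achieve condition (2) you push the interior of the disc inward by a bump \emph{supported away from} $\partial\D$. Such a perturbation leaves a neighbourhood of $\partial\D$ untouched, so it does nothing for maps that send interior points arbitrarily close to $\partial\D$ into $M$; for example, a map in $\DD$ may collapse a whole annular neighbourhood of $\partial\D$ onto a curve in $\Sigma$ (elements of $\DD$ are merely continuous, so this certainly occurs), and then no perturbation with support disjoint from $\partial\D$ can arrange $u(\Int\D)\subset\Int W$. The related bookkeeping claim is also wrong: condition (2) is \emph{not} an open condition, precisely because points of $\Int\D$ near $\partial\D$ are forced by the boundary condition to map near $M$, so an arbitrarily $C^\infty$-small perturbation can push such points back onto $M$. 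What is open (in the $C^1$-topology on maps of pairs) is the \emph{conjunction} of (2) with the transversality condition (4); your induction should be run with that pair of conditions, not with (2) alone.

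The repair is standard. Either push inward along a collar vector field of $M$ for a time $t(z)\geq 0$ that is positive on $\Int\D$ and vanishes exactly on $\partial\D$ (this is $C^0$-small, preserves the boundary condition, and gives (2) in one stroke); or first achieve (4): in a collar $M\times[0,\varepsilon)$, transversality of $u$ to $M$ at points of $\partial\D$ just means that the $[0,\varepsilon)$-component of $u$ has positive inward normal derivative along $\partial\D$, which you obtain by adding $\varepsilon\, t(z)$ to that component; once $u$ is transverse to $M$ along $\partial\D$, the set $u^{-1}(M)\setminus\partial\D$ is compact in $\Int\D$, and \emph{now} a perturbation supported away from $\partial\D$ pushes it off $M$. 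Note in passing that this collar argument is also what justifies (4) itself: the unconstrained Thom transversality theorem does not apply verbatim, since the boundary condition $u(\partial\D)\subset\Sigma\subset M$ forces $u$ to meet $M$, so transversality must be produced by perturbations respecting the pair $(W,\Sigma)$, exactly as above. With these corrections your argument becomes the intended one.
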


The set of all {\bf intersection points} of two discs $u_1,u_2\in\DD$ is
\[ S(u_1,u_2):=\{ (z_1,z_2)\in\D\times\D\co u_1(z_1)=u_2(z_2)\}.\]
For the purpose of defining an intersection number of $u_1$ and $u_2$
we distinguish the following subsets of $S(u_1,u_2)$.

\begin{defn}
The set of {\bf interior intersection points} of $u_1,u_2\in\DD$ is
\[ S_{\Int}(u_1,u_2):=S(u_1,u_2)\cap \bigl( \Int\D\times\Int\D\bigr) .\]
The set of {\bf boundary intersection points} is
\[ S_{\partial}(u_1,u_2):=S(u_1,u_2)\cap (\partial\D\times\partial\D ).\]
\end{defn}

By the requirement that an admissible disc map $\Int\D$ to the
interior of~$W$, there can be no mixed intersection points, hence
\[ S(u_1,u_2)=S_{\Int}(u_1,u_2)\sqcup S_{\partial}(u_1,u_2).\]

\begin{defn}
We say that two admissible discs $u_1,u_2\in\AAA$ {\bf intersect nicely}
if $S(u_1,u_2)$ is a finite set of isolated intersection points, and
if for any $(z_1,z_2)\in S_{\partial}(u_1,u_2)$ the two $2$-planes
\[ du_1(T_{z_1}\D),\; du_2(T_{z_2}\D)\subset T_{u_1(z_1)}W
=T_{u_2(z_2)}W\]
either are transverse or coincide. Write $\PP\subset\AAA\times\AAA$
for the set of pairs of nicely intersecting discs.
\end{defn}

Notice that the second condition of this definition is automatically
satisfied for any two holomorphic discs.

\begin{prop}
\label{prop:nice}
The set $\PP$ of pairs of nicely intersecting discs
is dense in $\DD\times\DD$.
\end{prop}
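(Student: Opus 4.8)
The plan is to combine Proposition~\ref{prop:admissible} with a general position argument. Since $\AAA$ is dense in $\DD$, the product $\AAA\times\AAA$ is dense in $\DD\times\DD$, so it suffices to approximate an arbitrary admissible pair $(u_1,u_2)\in\AAA\times\AAA$ in the $C^\infty$-topology (hence \emph{a fortiori} in $C^0$) by a pair lying in~$\PP$. I would produce this approximation by perturbing $u_1$ and $u_2$ through admissible discs, exploiting that admissibility is an open condition: for $(u_1,u_2)\in\AAA\times\AAA$ the transversality of each $u_i$ to~$M$ and the immersion property of $u_i|_{\partial\D}$ persist under small perturbations, and the intersection conditions we arrange will be open as well.

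I would use perturbations $u_i^{(s)}=\exp_{u_i}(s\eta_i)$ for vector fields $\eta_i$ along $u_i$ with $\eta_i|_{\partial\D}$ tangent to~$\Sigma$; this keeps the boundary on~$\Sigma$. Two features of this class of perturbations are decisive: prescribing $\eta_i|_{\partial\D}$ moves the boundary curve $\gamma_i:=u_i|_{\partial\D}$ freely inside the surface~$\Sigma$, whereas prescribing the inward normal derivative of $\eta_i$ along $\partial\D$, with $\eta_i|_{\partial\D}=0$, tilts the tangent plane $du_i(T_z\D)$ without moving~$\gamma_i$. These two types of freedom will be needed for the boundary and the interior intersections, respectively.

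Following the decomposition $S(u_1,u_2)=S_{\Int}(u_1,u_2)\sqcup S_{\partial}(u_1,u_2)$ recorded above, the interior points are handled by Thom's transversality theorem applied in the open manifold $\Int W$: one makes $u_1\times u_2$ transverse to the diagonal $\Delta\subset\Int W\times\Int W$ on $\Int\D\times\Int\D$. As $\Delta$ has codimension~$4$ and $\Int\D\times\Int\D$ has dimension~$4$, the preimage is $0$-dimensional, and transversality to~$\Delta$ says precisely that the planes $du_1(T_{z_1}\D)$ and $du_2(T_{z_2}\D)$ meet only in~$0$, i.e.\ transversely. For the boundary points one first puts $\gamma_1,\gamma_2$ into general position as immersed curves in~$\Sigma$, so that $\gamma_1\times\gamma_2$ is transverse to $\Delta_\Sigma\subset\Sigma\times\Sigma$; since $\partial\D\times\partial\D$ has dimension~$2$ and $\Delta_\Sigma$ codimension~$2$, this makes $S_\partial(u_1,u_2)$ a finite set of isolated points at each of which the boundary tangent lines $L_i:=du_i(T_{z_i}\partial\D)$ are distinct, and hence span $T_p\Sigma$.

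The hardest step, which I would treat last, is to upgrade this to transversality of the full $2$-planes $P_i:=du_i(T_{z_i}\D)$ at every boundary intersection point~$p$. Each $P_i$ is spanned by $L_i$ together with an inward normal vector $n_i$ whose projection to $T_pW/T_pM$ is nonzero, by transversality to~$M$. Once the lines $L_1,L_2$ span $T_p\Sigma$, the planes $P_1,P_2$ fail to be transverse exactly on the codimension one locus where the projections of $n_1,n_2$ to $T_pW/T_p\Sigma$ are linearly dependent; the normal derivative perturbations of the second paragraph change the $n_i$ while fixing the $\gamma_i$, so this is a generic condition that can be removed. The main obstacle is to organise all of this into a single coherent argument without leaving the class of admissible discs: I expect the cleanest route is one application of parametric transversality over a finite dimensional family of admissible perturbations that is submersive relative to each stratum of the manifold with corners $\D\times\D$, equivalently the version of Thom's theorem asking that $u_1\times u_2$ be transverse to the diagonal relative to every stratum. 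Finiteness of $S(u_1,u_2)$ then follows from compactness of $\D\times\D$, since all intersection points have been made isolated, and each condition, once achieved, is open and hence survives the subsequent perturbations.
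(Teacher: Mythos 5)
Your reduction to admissible pairs and your inventory of the needed perturbations (moving the boundary curves inside $\Sigma$, tilting the tangent planes by normal-derivative perturbations relative to $\partial\D$, interior general position) match the paper's toolkit, but the order in which you deploy them creates a genuine gap. You make the interior intersections transverse first, via Thom transversality on the \emph{noncompact} manifold $\Int\D\times\Int\D$, and then claim that ``each condition, once achieved, is open and hence survives the subsequent perturbations.'' That openness claim is false for exactly this condition: transversality of $u_1\times u_2$ to the diagonal on a noncompact set is not open in the $C^\infty$-topology of maps of the closed disc, because interior intersection points may still form an infinite set accumulating at the boundary of $\D\times\D$ (their accumulation points are necessarily boundary intersection points, since admissible discs send $\Int\D$ into the interior of $W$). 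Your subsequent boundary perturbations are supported precisely in the region where this trouble lives, so they can destroy interior transversality near $\partial\D$ and create new, possibly non-transverse, interior intersections there. For the same reason your final finiteness argument (``all intersection points are isolated, $\D\times\D$ is compact, hence finitely many'') only works if all the transversality conditions hold \emph{simultaneously}; the sequential scheme in your order does not deliver this.

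The repair is to reverse the order, which is what the paper does: first make the boundary curves transverse immersions in $\Sigma$; then, at the finitely many boundary intersection points, perturb $u_1$ relative to $\partial\D$, in a collar of $M$, in a direction transverse to $\Sigma\subset M$ (your normal-derivative perturbation), so that the two discs become transverse near $\partial\D$. Once this holds, every boundary intersection point is isolated in all of $S(u_1,u_2)$, so $S_{\Int}(u_1,u_2)$ can no longer accumulate at the boundary and is confined to a compact subset of $\Int\D\times\Int\D$. A final perturbation supported away from $\partial\D$ then makes the remaining interior intersections transverse without disturbing what was achieved at the boundary; in this order each achieved condition is indeed stable under the next, compactly supported, perturbation. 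Your alternative suggestion --- a single parametric transversality argument over a family of admissible perturbations, relative to all strata of $\D\times\D$ --- could also be made to work and would sidestep the ordering issue, but you present it as a matter of tidiness rather than as the necessary fix, and you do not carry it out. Note that this accumulation-at-the-boundary subtlety is precisely the issue the paper singles out as having been overlooked in Ye's treatment, so it deserves explicit attention rather than an appeal to openness.
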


\begin{proof}
By Proposition~\ref{prop:admissible} it suffices to show that
$\PP$ is dense in~$\AAA\times\AAA$. Let $u_1,u_2\in\AAA$ be given.
We are going to find an arbitrarily $C^{\infty}$-small
isotopic perturbation of $u_1$ to a new disc intersecting
$u_2$ nicely. The necessary differential topological
methods are supplied by~\cite{hirs76}.

In a first step, we find an
arbitrarily $C^{\infty}$-small homotopy of $u_1|_{\partial\D}$
to an immersion transverse to $u_2|_{\partial\D}$. By sweeping
out this homotopy over a collar neighbourhood of $M$ in~$W$,
this extends to a homotopy of~$u_1$. We continue to write
$u_1$ for the new map after this and each of the following
perturbations.

So after this first step the maps $u_1|_{\partial\D}$ and
$u_2|_{\partial\D}$ are transverse immersions of the circle
into~$\Sigma$, but $u_1$ and $u_2$ may fail to be transverse
along~$\partial\D$. This, however, can only happen at
one of the finitely many intersection points of $u_1(\partial\D)$
and $u_2(\partial\D)$. In a collar neighbourhood $(-\varepsilon ,0]
\times M$ of $M$ in $W$ one can define a 
$C^{\infty}$-small perturbation of~$u_1$,
supported in this collar neighbourhood and relative to~$\partial\D$,
in a direction transverse to $\Sigma\subset M$.

After this second perturbation we may assume that $u_1$ and $u_2$
are transverse near~$\partial\D$. Then $S_{\Int}(u_1,u_2)$
does not have any accumulation points at the boundary, and a further
perturbation of $u_1$ outside a neighbourhood of $\partial\D$ in $\D$
gives us a nicely intersecting pair $(u_1,u_2)$.
\end{proof}

\begin{rem}
The proof shows that even the subset of pairs $(u_1,u_2)$ with
transverse intersections only is dense in $\AAA\times\AAA$.
\end{rem}

As we shall see in the next section, two distinct
holomorphic discs will intersect nicely, but not --- a priori ---
transversely. For this reason, the above proposition has been phrased
in terms of nicely intersecting discs. We now wish to define
an intersection number for such discs by assigning an intersection
multiplicity to each isolated intersection point, cf.\
\cite{mcdu94}, \cite[Definition~E.2.1]{mcsa04}.
We distinguish three types of intersections, where in each case we write
$p:=u_1(z_1)=u_2(z_2)$ for $(z_1,z_2)\in S(u_1,u_2)$.

\vspace{1mm}

(i) Interior intersections. Given $(z_1,z_2)\in S_{\Int}(u_1,u_2)$,
we define the intersection multiplicity $(u_1\cdot u_2)_{(z_1,z_2)}$
as in the cited references. Choose neighbourhoods of $p$
in $u_1(\D)$ and $u_2(\D)$ that intersect in $p$ only. Then perturb the
discs inside these neighbourhoods, keeping either neighbourhood
disjoint from the boundary of the other, so as to achieve
transversality of intersection. Then count the intersection points with sign
according to the orientation. Notice that for the definition
of this intersection multiplicity the maps $u_1$ and $u_2$ need not be
smooth; it suffices that the intersection points be isolated.

\vspace{1mm}

(ii) Transverse boundary intersections. Here the intersection
multiplicity can be
defined as for an interior transverse intersection point.
If $(z_1,z_2)\in S_{\partial}(u_1,u_2)$ is a transverse
intersection point, the Whitney sum
\[ du_1(T_{z_1}\D)\oplus du_2(T_{z_2}\D) \]
is isomorphic to $T_pW$.
The intersection multiplicity $(u_1\cdot u_2)_{(z_1,z_2)}$
equals $\pm 1$, depending on whether this isomorphism
is orientation preserving or reversing.

\vspace{1mm}

(iii) Tangential boundary intersections (cf.~\cite{ye98}, \cite{wend05}).
Let $(z_1,z_2)\in S_{\partial}(u_1,u_2)$ be a tangential intersection
point, i.e.\ $du_1(T_{z_1}\D)=du_2(T_{z_2}\D)$.
Since $u_1$ and $u_2$ are transverse to $M$ from the same side,
we can write $u_2 (\D)$ locally as a graph over $u_1(\D)$.
It is not possible, in general, to give a homotopy-invariant
definition of an intersection multiplicity when this `same side' condition
is dropped, see~\cite[Example~4.4.7]{wend05}.
We write the local model for
\[ p\in\Sigma\subset M=\partial W\subset W\]
as
\[ 0\in\R\times\R\subset\R\times\C\subset\Hp\times\C.\]
(This complex model is chosen for convenience of notation only;
we do not consider holomorphic discs yet.) Moreover,
we may assume that the local model has been selected in such a
way that the local description of $u_1$ and $u_2$ is of the form
\[ (\Hp,\R)\lra (\Hp\times\C,\R\times\R)\]
with
\[ u_1(z)=(z,0)\;\;\;\mbox{\rm and}\;\;\;
u_2(z)=(z,h(z)).\]
Now extend $u_1$ and $u_2$ by Schwarz reflection to continuous
maps
\[ (\C,\R)\lra (\C\times\C,\R\times\R).\]
For $u_1$ this simply means
extending the definition $u_1(z)=(z,0)$ to $z\in\C$; for $u_2$ we set
\[ u_2(z)=(z,\overline{h(\oz)})\;\;\mbox{\rm for}\;\;
\im z<0.\]
Up to homotopy, the definition of this extension is independent
of the choice of local coordinates, and so the
intersection multiplicity $(u_1\cdot u_2)_{(z_1,z_2)}$
is well defined as the intersection multiplicity in the
sense of (i) of these extended maps.

\vspace{1mm}

The definition of the intersection multiplicity as in (iii)
can also be used for transverse boundary intersections, in which case
it coincides with that in~(ii). Just as in~(i), this allows us
to assign a multiplicity to any isolated intersection point
of discs that are merely continuous.

\vspace{1mm}

The following example will motivate the definition of the
overall intersection number of a nicely intersecting pair $(u_1,u_2)$.

\begin{ex}
Consider the maps $u_1(z)=(z,0)$ and $u_2(z)=(z,z^3)$, $z\in\Hp$,
in the local model just described. The intersection point $(0,0)$ is
a tangential boundary intersection of multiplicity~$3$. We
can perturb the extended $u_2$ within the class of maps
symmetric with respect to Schwarz reflection to $u_2^{\varepsilon}(z)=
(z,z^3-\varepsilon^3)$, $z\in\C$, with $\varepsilon>0$, say. Then
$u_1$ and $u_2^{\varepsilon}$ intersect in the three points
$\varepsilon \exp (2\pi ik/3)$,
$k=0,1,2$, all of multiplicity~$1$. One of these
intersection points still sits on the original boundary, now as a transverse
intersection point. The two other intersection points correspond
to each other under Schwarz reflection; only one of them lies
inside the actual discs $u_1,u_2^{\varepsilon}$, i.e.\
in the non-extended domain of definition $\Hp$ (in the
local model) of these discs.
\end{ex}

Notice that we have used the condition that $u_1,u_2$ map
$\partial\D$ to $\Sigma\subset M$ in an essential way for the
extension of $u_1,u_2$ via Schwarz reflection. The more obvious
way of extending $u_1,u_2$ by doubling along~$M$, which would
require only that $\partial\D$ map to~$M$,
would lead to intersection points on
the boundary of multiplicity~$0$, and pairs of intersection
points (related by the symmetry) of opposite multiplicity.

\begin{defn}
The {\bf intersection number} of a pair $(u_1,u_2)\in\PP$
of nicely intersecting admissible discs is
\[ u_1\bullet u_2:=2\sum_{S_{\Int}(u_1,u_2)}
(u_1\cdot u_2)_{(z_1,z_2)}+
\sum_{S_{\partial}(u_1,u_2)}
(u_1\cdot u_2)_{(z_1,z_2)},\]
where the sums are meant to be taken over the pairs $(z_1,z_2)$
of points in the sets $S_{\Int}(u_1,u_2)$ and $S_{\partial}(u_1,u_2)$,
respectively.
\end{defn}

As observed by Ye~\cite[Lemma~7.2]{ye98}, this intersection
number is a homotopy invariant.

\begin{prop}
\label{prop:intersection}
The intersection number $u_1\bullet u_2$ depends only on the
homotopy classes $[u_1],[u_2]\in\pi_2(W,\Sigma )$.
\end{prop}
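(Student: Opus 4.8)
The plan is to prove homotopy invariance by an interpolation argument: given two admissible discs representing fixed classes $[u_1],[u_2]\in\pi_2(W,\Sigma)$, I would connect them to any other admissible representatives by a homotopy and verify that $u_1\bullet u_2$ is unchanged. By Proposition~\ref{prop:nice}, it suffices to work with nicely intersecting pairs, and by symmetry of the pairing (evident from the definition) I may fix $u_2$ and vary $u_1$ through a homotopy $u_1^\tau$, $\tau\in[0,1]$, within its class. First I would arrange, using the density and general-position results from~\cite{hirs76} as in the proof of Proposition~\ref{prop:nice}, that the homotopy $u_1^\tau$ is itself generic: the total space $\bigcup_\tau S(u_1^\tau,u_2)$ should be a $1$-manifold, so that as $\tau$ varies the isolated intersection points trace out arcs, and the intersection number can only change at finitely many parameter values where a \emph{birth/death} of intersection points occurs or where an intersection point crosses the boundary $\partial\D\times\partial\D$.

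The key step is to check that each such event preserves the weighted count. There are two mechanisms to control. Interior birth/death events create or destroy interior intersection points in oppositely-signed pairs, each counted with weight $2$, so the contribution $2\sum_{S_{\Int}}$ is unaffected; this is the classical cobordism argument for the usual interior intersection number. The subtler mechanism is an interior intersection point migrating to the boundary (or conversely) as $\tau$ varies. Here I would exploit the Schwarz-reflection model underlying the definition in case~(iii): under reflection, an interior intersection point near $\partial\D$ has a mirror partner, and the two together carry weight $2$, while a boundary intersection point is its own mirror image and carries weight $1$. Thus when a symmetric pair of interior points collides on the boundary and merges into a single boundary point, the total weight $2+2\cdot(\text{interior})$ reorganises correctly into $1\cdot(\text{boundary})$ counted twice $=2$, matching the factor-$2$ weighting in the definition of $u_1\bullet u_2$. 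The choice of this particular weighting is precisely what makes the count invariant under such boundary crossings, as the Example preceding the statement already illustrates.

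The main obstacle will be the analysis of intersection points accumulating at or crossing the boundary, exactly the subtle issue the authors flag as unaddressed in~\cite{ye98}. To handle it rigorously I would again invoke the collar-neighbourhood perturbation from the proof of Proposition~\ref{prop:nice}, arranging that throughout the generic homotopy $u_1^\tau$ remains transverse to $M=\partial W$ and that boundary crossings occur one at a time, transversally in the parameter $\tau$. The Schwarz-reflected extensions $\widehat{u_1^\tau},\widehat{u_2}$ to maps $(\C,\R)\to(\C\times\C,\R\times\R)$ are then defined for all $\tau$ and depend continuously on $\tau$; applying ordinary interior intersection-theory invariance to these \emph{doubled} discs on the reflected domain gives a $\tau$-independent count, and one reads off $u_1\bullet u_2$ from it by the bookkeeping described above (each genuine boundary point once, each reflection-paired interior point once, with the factor $2$ absorbing the doubling). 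I expect the genericity of the homotopy together with the reflection trick to reduce the whole statement to the standard fact that interior intersection numbers of maps into an oriented $4$-manifold are homotopy invariants, so the essential work is in setting up the reflected picture carefully rather than in any new intersection-theoretic input.
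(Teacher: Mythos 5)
Your guiding intuition is sound: the weighting (interior points counted twice, boundary points once) is exactly what makes the count survive collisions of intersection points with $\partial\D$, and your local bookkeeping for such an event is correct. But the step your whole reduction rests on does not exist. The Schwarz-reflection extension in case~(iii) of Section~\ref{section:intersection} is a purely \emph{local} construction: it requires coordinates near one boundary intersection point in which $u_2$ is a graph $z\mapsto(z,h(z))$ over $u_1$ and in which $\Sigma$ becomes $\R\times\R$; only then can one set $u_2(z)=(z,\overline{h(\oz)})$ for $\im z<0$. Writing $\widehat{u_1^\tau},\widehat{u_2}\co(\C,\R)\to(\C\times\C,\R\times\R)$ tacitly pretends that $W$ itself is $\Hp\times\C$. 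For a general pair $(W,\Sigma)$ the only globally available doubling is $W\cup_M\overline{W}$ along $M=\partial W$, and, as is observed in Section~\ref{section:intersection} immediately after the first Example, that doubling reflects through $M$ rather than through $\Sigma$: it assigns multiplicity~$0$ to boundary intersection points and produces mirror pairs of interior points with \emph{opposite} signs, so the doubled count is identically zero and carries no information about $u_1\bullet u_2$. The reflection that does give the correct multiplicities (fixing $\Sigma$ and conjugating its normal direction inside $M$) is the identity on $\Sigma$ but not on $M$, hence is incompatible with the gluing of any global double of $W$. Consequently ``ordinary interior intersection-theory invariance for the doubled discs'' is not available, and your argument has no mechanism left to control the boundary events, which is precisely the delicate point.

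What survives of your plan is the generic-homotopy strategy with a case-by-case \emph{local} analysis of events (interior birth/death, an interior point and its local mirror colliding on $\partial\D$, boundary points cancelling in pairs), each checked in the local model to preserve $2\sum_{S_{\Int}}+\sum_{S_{\partial}}$. That can be made to work, but it is the hard technical content --- relative transversality for homotopies constrained by $u_1^\tau(\partial\D)\subset\Sigma$, plus the classification of generic boundary events --- and cannot simply be quoted. The paper's proof avoids this dynamics entirely by a static decomposition: after normalising both discs in a collar (the two Examples) so that the multiplicity at each transverse boundary intersection point equals the intersection multiplicity of the boundary \emph{curves} inside $\Sigma$, the boundary contribution becomes $u_1|_{\partial\D}\bullet u_2|_{\partial\D}$, an invariant of the free homotopy classes of curves in the surface $\Sigma$; for the interior contribution, the two representatives $u_1^0$, $u_1^1$ and a homotopy of their boundaries are glued into a sphere $f\co S^2\ra W$, pushed off $\partial W$, and the difference of interior sums is identified with the homological pairing of $[f]\in H_2(W)$ with $[u_2]\in H_2(W,\partial W)$, which vanishes because $f$ is null-homotopic. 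You would strengthen your write-up considerably either by adopting that two-step reduction or by honestly carrying out the local event analysis you sketch.
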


Our proof follows a route slightly different from Ye's. Before we turn
to the formal argument, we illustrate by an example the kind of
deformations we shall employ.

\begin{ex}
We can picture transverse boundary intersections in the same
local model used above to describe tangential boundary intersections.
We always take $u_1(z)=(z,0)$. The two maps
$u_2(z)=(z,z)$ and $u_2^*(z)=(z,-z)$ give
local models for a transverse boundary intersection, both of the
same sign, i.e.\ $(u_1\cdot u_2)_{(0,0)}=(u_1\cdot u_2^*)_{(0,0)}$.
However, the signs of the intersection of the boundary curves
in $\R\times\R$ are different: $(u_1|_{\R}\cdot u_2|_{\R})_{(0,0)}=
-(u_1|_{\R}\cdot u_2^*|_{\R})_{(0,0)}$.

Choose some small $\varepsilon >0$ and let
$h_2\co\R_0^+\ra\R$ be a smooth function with
$h_2(y)=y$ for $y\in [0,\varepsilon )$ and $h_2(y)=-y$ for
$y\geq 2\varepsilon$, with a single further zero at $3\varepsilon/2$,
where $h_2'(3\varepsilon/2)<0$. Then the map $z=x+iy\mapsto
(z,-x+ih_2(y))$ coincides with $u_2^*$ outside a small
neighbourhood of the boundary, but near the
boundary it looks like $z\mapsto (z,-\oz)$.

Suppose for argument's sake that
the orientations in the model had been chosen in such a way
that $(u_1\cdot u_2^*)_{(0,0)}=1$, but $(u_1|_{\R}
\cdot u_2^*|_{\R})_{(0,0)}=-1$.
After the described perturbation near the boundary, both
intersections are negative. The new transverse intersection point in the
interior at $z=3\varepsilon i/2$ is positive, so the intersection
number $u_1\bullet u_2^*$ is not affected by this perturbation.
\end{ex}

This example shows the following. Let $u_1,u_2$ be a
pair of {\em transversely\/} intersecting admissible discs.
Then we may perturb $u_2$ near the boundary
in such a way that the intersection number $u_1\bullet u_2$
remains unchanged, but the intersection multiplicity at each boundary
intersection point equals the intersection multiplicity of
the boundary curves in $\Sigma$ at that point.

Observe that the two models where the intersection multiplicities
coincide in the way described are $z\mapsto (z,z)$ and $z\mapsto
(z,-\oz)$. In either of these models the imaginary
part of the second coordinate equals~$y$, which means that
both models depict discs on the same side of $\Sigma\subset M$
in a collar neighbourhood of $M=\partial W$ in~$W$.

\begin{proof}[Proof of Proposition~\ref{prop:intersection}]
Let $u_1^0$ and $u_1^1$ be admissible discs with $[u_1^0]=[u_1^1]
\in\pi_2(W,\Sigma )$, either of which nicely intersects a
further admissible disc~$u_2$. We need to show that
$u_1^0\bullet u_2=u_1^1\bullet u_2$.

In case there are tangential boundary intersections, we
make a small homotopic perturbation of $u_1^0,u_1^1$
as in the first of
our examples so as to get transverse intersections only.
This does not affect the intersection number. Next, we perform
a homotopy as in the second example to adjust the signs of
the boundary intersections. We continue to write $u_1^0,u_1^1$
for these discs.

Since $u_1^0$ and $u_1^1$ are homotopic, so {\em a fortiori\/}
are the boundary curves $u_1^0|_{\partial\D}$ and
$u_1^1|_{\partial\D}$. Hence
\[ \sum_{S_{\partial}(u_1^0,u_2)} (u_1^0\cdot u_2)_{(z_1,z_2)}=
u_1^0|_{\partial\D}\bullet u_2|_{\partial\D}=
u_1^1|_{\partial\D}\bullet u_2|_{\partial\D}=
\sum_{S_{\partial}(u_1^1,u_2)} (u_1^1\cdot u_2)_{(z_1,z_2)},\]
where $\bullet$ is the usual intersection product of curves
on~$\Sigma$.

A collar neighbourhood of $\Sigma$ in $W$ can be written
globally as $(-\varepsilon ,0]\times\R\times\Sigma$.
Given the observation before
this proof, we may assume that the disc $u_2(\D)$ looks
like $(-\varepsilon ,0]\times\{ 0\}\times u_2(\partial\D)$
in this collar, while the discs $u_1^j$, $j=0,1$,
look like $\{ (-t,t,q)\co t\in [0,\varepsilon ),\, q\in u_1^j(\partial\D)\}$.

Now the disc $u_1^0$, a homotopy from $u_1^0|_{\partial\D}$ to
$u_1^1|_{\partial\D}$, and the disc $u_1^1$ with reversed orientation
define a map $f\co S^2\ra W$. When we push this $2$-sphere
away from the boundary $\partial W$ in the direction of the
tangent vector $(-1,1,0,0)$ to $(-\varepsilon ,0]\times\R\times\Sigma$,
we remove all boundary intersection points without
creating any new intersections. So the intersection number
of this $2$-sphere with $u_2$ equals
\[ \sum_{S_{\Int}(u_1^0,u_2)} (u_1^0\cdot u_2)_{(z_1,z_2)}-
\sum_{S_{\Int}(u_1^1,u_2)} (u_1^1\cdot u_2)_{(z_1,z_2)}.\]
This intersection number is well defined homotopically
(even homologically), i.e.\ there is an intersection product
\[ \bullet\co H_2(W)\otimes H_2(W,\partial W)\lra\Z,\]
and the intersection product $[f]\bullet [u_2]$ is given by the
intersection number above, cf.~\cite[Section~VI.11]{bred93}.
However, from $[u_1^0]=[u_1^1]$ in $\pi_2(W,\Sigma )$ we
deduce that $f$ is homotopically and hence homologically trivial,
so $[f]\bullet [u_2]=0$. This concludes the proof.
\end{proof}

\begin{rem}
Two maps $(\D ,\partial\D )\ra (W,\Sigma )$ that are sufficiently
$C^0$-close are homotopic. Thus, with Proposition~\ref{prop:nice}
we can define an intersection product on $\pi_2(W,\Sigma )$.
\end{rem}
\section{Positivity of intersections of holomorphic discs}
\label{section:positivity}
We now want to apply the topological results of the preceding section to
holomorphic discs $(\D,\partial\D)\ra (D^4,\tilde{S}^t)$. For future
reference we formulate the results in slightly greater generality.
Thus, let $(W,J)$ be a smooth almost complex $4$-manifold with
$J$-convex boundary $\partial W$, and $\Sigma\subset\partial W$
an embedded oriented surface totally real with respect to~$J$.

In the present section, by {\bf holomorphic disc} we shall always mean
a smooth {\em non-constant\/} $J$-holomorphic disc
$(\D,\partial\D)\ra (W,\Sigma )$.
As we saw in the previous section, any
holomorphic disc is admissible.
Two holomorphic discs $u_1,u_2$ are called {\bf distinct}
if $u_1(\D)\neq u_2(\D)$.

\begin{prop}
\label{prop:distinct-nicely}
Any two distinct holomorphic discs intersect nicely.
\end{prop}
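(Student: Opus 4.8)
The plan is to verify the two requirements in the definition of \emph{intersecting nicely}: first, that $S(u_1,u_2)$ is a finite set of isolated points, and second, the transversality-or-coincidence condition on the boundary $2$-planes. The second point is already handled by the remark immediately after the definition of nice intersection (``the second condition \dots\ is automatically satisfied for any two holomorphic discs''), since two complex lines in a complex $2$-plane either agree or meet transversely. Hence the real content is showing that the intersection set is discrete and, crucially, that intersection points cannot accumulate, neither in the interior nor at the boundary.

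For the \textbf{interior}, I would invoke the standard local structure theory of $J$-holomorphic curves from \cite{mcsa04}: if two distinct holomorphic discs had interior intersection points accumulating at some $(z_1,z_2)\in\Int\D\times\Int\D$, then by the similarity principle and the local representation of intersections (as in \cite[Lemma~2.4.3]{mcsa04}, already used in the proof of Lemma~\ref{lem:simple}) the two discs would have locally identical images near $p=u_1(z_1)=u_2(z_2)$; propagating this coincidence by a connectedness argument would force $u_1(\D)=u_2(\D)$, contradicting distinctness. This is the same mechanism underlying positivity of intersections, so isolation of interior intersection points is essentially inherited from the cited theory.

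The \textbf{main obstacle} is ruling out accumulation of intersection points at the \emph{boundary} $\partial\D\times\partial\D$, precisely the subtle issue the authors flag in the introduction to Section~\ref{section:intersection} as not treated in~\cite{ye98}. Here I would lean on the maximum principle (Proposition~\ref{prop:maximum}): each $u_j|_{\partial\D}$ is an immersion into $\Sigma$ positively transverse to the characteristic foliation $\Sigma_\xi$, and $u_j$ maps $\Int\D$ to $\Int W$ while being transverse to $\partial W$ along the boundary. Transversality to $\partial W$ means that near a boundary point each disc is a genuine half-dimensional submanifold meeting $\partial W$ cleanly, so a Schwarz-type reflection (doubling across $\Sigma$, as set up later for the tangential intersection multiplicity) extends both discs to honest holomorphic maps on a full neighbourhood in $\C$. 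Accumulation of boundary intersection points would then produce accumulation of intersection points of the \emph{reflected} holomorphic curves at an interior point of the doubled domain, which the interior argument above already excludes.

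To make the reflection rigorous I would work in the local model $0\in\R\times\R\subset\R\times\C\subset\Hp\times\C$ introduced in Section~\ref{section:intersection}, writing $u_1(z)=(z,0)$ and $u_2(z)=(z,h(z))$ with $h$ holomorphic and real on $\partial\D$; reflecting $u_2$ via $u_2(z)=(z,\overline{h(\oz)})$ for $\im z<0$ yields a holomorphic extension whose intersections with the (trivially extended) $u_1$ are isolated by the interior case, unless $h\equiv 0$ locally, which again forces coincidence of images. Combining interior isolation, boundary isolation via reflection, and compactness of $\D\times\D$ gives finiteness of $S(u_1,u_2)$; together with the automatic transversality-or-coincidence of the boundary tangent planes, this establishes that $u_1$ and $u_2$ intersect nicely.
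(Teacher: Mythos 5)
Your architecture is the same as the paper's: reduce to showing that an infinite intersection set forces $u_1(\D)=u_2(\D)$, handle interior accumulation by Micallef--White, and handle boundary accumulation in the local model $(\Hp,\R,0)\to(\Hp\times\C,\R\times\R,0)$ by reflection across~$\R$. The genuine gap is your assertion that in this local model the second component $h$ of $u_2$ is \emph{holomorphic}, so that Schwarz reflection $z\mapsto\overline{h(\oz)}$ yields a holomorphic extension to which the identity theorem applies. This is unjustified, and it is precisely the difficulty the proposition has to overcome. The discs are $J$-holomorphic, and the coordinates flattening $\Sigma$ to $\R\times\R$ are only smooth: in the paper's application $\Sigma=\tilde{S}^t\setminus\{q^t_{\pm}\}=\varphi(S^t)\setminus\{q^t_{\pm}\}$ is a smooth, not real-analytic, totally real surface, since $\varphi$ is merely a contactomorphism. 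After flattening, $J$ can be normalised to agree with $J_0$ \emph{along the image} $\Hp\times\{0\}$ of $u_1$, but not on a neighbourhood. Hence the second component $b$ of $u_2=(a,b)$ satisfies only a perturbed equation $(\DB+B_2)b=0$ with a real-linear zeroth-order term, not $\DB b=0$. Reflecting a solution of such an equation across $\R$ produces neither a holomorphic map nor a solution of any reflected equation unless $B_2$ has a compatible symmetry, and the classical reflection principle is unavailable for merely smooth totally real boundary conditions. This objection applies even in the integrable case $W=D^4$, $J=J_0$.

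The paper closes exactly this gap with a \emph{relative Carleman similarity principle}. One first decouples the Cauchy--Riemann equation for $u_2$ via the interpolation identity $J(a,b)-J(a,0)=\int_0^1D_2J_{(a,tb)}(b)\,dt$, obtaining the linear equation $(\DB+B_2)b=0$ with boundary condition $b(\R)\subset\R$; one then uses the observation (Step~2 of the proof of \cite[Theorem~2.3.5]{mcsa04}) that $B_2$ may be assumed \emph{complex} linear. It is this complex linearity that legitimises Schwarz reflection at the level of the linear equation: both $b$ and $B_2$ extend across $\R$, and the Carleman similarity principle then exhibits the extended $b$ as a pointwise complex-linear transformation of a genuinely holomorphic function. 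Only at that stage does the identity theorem apply, giving $b\equiv 0$ and hence local coincidence of the images. Your proof becomes correct once the phrase ``with $h$ holomorphic'' is replaced by this argument; as written, the boundary case --- the heart of the proposition, and the issue the authors point out was left open in \cite{ye98} --- is assumed rather than proved. A smaller point: writing $u_2$ as a graph $(z,h(z))$ presupposes a reparametrisation that need not be available; the paper's model $u_2(z)=(a(z),b(z))$ avoids this, with intersection points corresponding simply to zeros of~$b$.
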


\begin{proof}
By the comment after the definition of `nicely intersecting'
in the previous section, it suffices to show that
any two distinct holomorphic discs intersect in finitely many points only.
We are going to prove the contrapositive. That is, let $u_1,u_2$
be two holomorphic
discs for which $S(u_1,u_2)$ is infinite. We have to
show that $u_1(\D)=u_2(\D)$.

In the infinite set $S(u_1,u_2)$ we can choose a non-constant
sequence $(z_1^{\nu},z_2^{\nu})$, $\nu\in\N$, converging to some point
$(z_1^0,z_2^0)\in S(u_1,u_2)$. If $(z_1^0,z_2^0)\in S_{\Int}(u_1,u_2)$,
then by the work of Micallef--White~\cite[Theorem~7.1]{miwh95}
one can find neighbourhoods
$U_i\in\D$ of $z_i^0$, $i=1,2$, with $u_1(U_1)=u_2(U_2)$.
We claim that the same conclusion holds for
$(z_1^0,z_2^0)\in S_{\partial}(u_1,u_2)$.
Accepting this claim for the time being, we then see that no matter
where the intersection points accumulate, the set of points in $\D$ that
have a neighbourhood
mapped by $u_1$ into $u_2(\D)$ is non-empty. This set is
open by definition, and closed by the same argument used to show
that it is non-empty.
Hence $u_1(\D)\subset u_2(\D)$. The converse inclusion
holds by symmetry of the argument.

It remains to prove the claim. As in the topological situation we may
choose local models such that $u_1,u_2$ around $z^0_1,z^0_2$,
respectively, may be regarded as germs of maps
\[ (\Hp,\R,0)\lra (\Hp\times\C,\R\times\R,0)\]
of the form
\[ u_1(z)=(z,0)\;\;\mbox{\rm and}\;\;u_2(z)=(a(z),b(z)).\]
Moreover, we may assume that in this local model the almost complex
structure $J$ coincides with the standard structure $J_0$
along $\Hp\times\{ 0\}\subset\Hp\times\C$. The assumption on
$(z^0_1,z^0_2)$ being an accumulation point of intersections
translates into saying that $b$ has an accumulation point of
zeros in $z=0$.

The following interpolation argument is reminiscent of the proof of the
unique continuation theorem in~\cite[p.~24]{mcsa04}. We write
\[ J(a,b)-J(a,0)  =  \int_0^1\frac{d}{dt}J(a,tb)\, dt
               =  \int_0^1D_2J_{(a,tb)}(b)\, dt.\]
This can now be used to simplify the Cauchy--Riemann equation for~$u_2$.
\begin{eqnarray*}
0\;=\;\partial_xu_2+J(u_2)\partial_yu_2
       & = & \partial_xu_2+J_0\partial_yu_2+(J(u_2)-J_0)\partial_yu_2\\
       & = & \partial_xu_2+J_0\partial_yu_2+(J(a,b)-J(a,0))\partial_yu_2\\
       & = & \partial_xu_2+J_0\partial_yu_2+ \int_0^1D_2J_{(a,tb)}(b)\, dt
             \cdot\partial_yu_2.
\end{eqnarray*}
Define a smooth map $B=(B_1,B_2)$ from $\Hp$ into
the real linear maps $\C\ra\C^2$ by
\[ B_z\eta=\int_0^1D_2J_{(a(z),tb(z))}(\eta)\, dt\cdot\partial_yu_2(z).\]
Then the Cauchy--Riemann equation for $u_2$ decouples into two
$1$-dimensional equations
\begin{align*}
\partial_xa+i\partial_ya+B_1b  &=  0,\tag{$\mathrm{CR}_{a,b}$}\\
\partial_xb+i\partial_yb+B_2b  &=  0.\tag{$\mathrm{CR}_b$}
\end{align*}
Now one applies a relative version of the Carleman similarity
principle to~($\mathrm{CR}_b$).
For this principle in the absolute case see
\cite[Corollary~13]{hofe93} (for the $1$-dimensional situation),
\cite[Section~A.6]{hoze94} and~\cite[Theorem~2.3.5]{mcsa04}.
The key to a relative version of this principle
is the observation (Step~2 in the
proof of~\cite[Theorem~2.3.5]{mcsa04}) that $B_2$ may be assumed to be
complex linear. Then both the solution $b$ of our linear equation
and the map $B_2$ can be extended from $\Hp$ to $\C$ (near $0$)
by Schwarz reflection. The Carleman similarity principle
(cf.\ Step~(v) in the proof of Proposition~\ref{prop:sigma-tau})
then implies that the solution $b$ is a pointwise complex linear
transformation of a holomorphic function. The identity theorem applied to
this holomorphic function yields that this function, and hence~$b$,
is identically zero.
\end{proof}

\begin{rem}
Relative versions of the Carleman similarity principle have been
mentioned previously in \cite[Proposition~3.1]{lazz00}
and~\cite[Lemma~3.1]{abba04}.
\end{rem}

In the following theorem we use $|\, .\, |$ to denote the cardinality of
a finite set.

\begin{thm}[Positivity of intersections]
\label{thm:positivity}
The intersection number of distinct holomorphic
discs $u_1,u_2$ satisfies the inequality
\[ u_1\bullet u_2\geq 2|S_{\Int}(u_1,u_2)|+|S_{\partial}(u_1,u_2)|,\]
with equality if and only if all intersections are transverse.
In particular, $u_1(\D)$ and $u_2(\D)$ are disjoint if and only
if $u_1\bullet u_2=0$.
\end{thm}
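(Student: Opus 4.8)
The plan is to reduce the theorem to a purely local statement: I shall show that \emph{every} intersection point contributes positively, namely that each interior and each boundary intersection multiplicity is at least~$1$, with equality if and only if the intersection at that point is transverse. Granting this, the theorem follows at once. By Proposition~\ref{prop:distinct-nicely} two distinct holomorphic discs intersect nicely, so $S_{\Int}(u_1,u_2)$ and $S_{\partial}(u_1,u_2)$ are finite, and from the definition
\[ u_1\bullet u_2=2\sum_{S_{\Int}(u_1,u_2)}(u_1\cdot u_2)_{(z_1,z_2)}+\sum_{S_{\partial}(u_1,u_2)}(u_1\cdot u_2)_{(z_1,z_2)} \]
the asserted inequality is immediate, with equality precisely when every summand equals~$1$, i.e.\ when all intersections are transverse. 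The final statement is then a consequence: since each multiplicity is at least~$1$, the vanishing $u_1\bullet u_2=0$ forces $S_{\Int}(u_1,u_2)=S_{\partial}(u_1,u_2)=\emptyset$, so $u_1(\D)$ and $u_2(\D)$ are disjoint; the converse is trivial.

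For interior intersection points I would invoke the local structure theorem of Micallef--White~\cite[Theorem~7.1]{miwh95}: at $(z_1,z_2)\in S_{\Int}(u_1,u_2)$ two distinct holomorphic discs look, after a local homeomorphism, like a pair of holomorphic curves, so the intersection index $(u_1\cdot u_2)_{(z_1,z_2)}$ computed by perturbing to transversality as in case~(i) of the definition is strictly positive, and equals~$1$ exactly when the intersection is transverse. This is the standard positivity of intersections at interior points.

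The substance of the proof lies at the boundary. Here I return to the local model in the proof of Proposition~\ref{prop:distinct-nicely}. Around $(z_1,z_2)\in S_{\partial}(u_1,u_2)$ we may write $u_1,u_2$ as germs $(\Hp,\R,0)\to(\Hp\times\C,\R\times\R,0)$ with $u_1(z)=(z,0)$, $u_2(z)=(a(z),b(z))$, and $J=J_0$ along $\Hp\times\{0\}$; the normal component $b$ solves the scalar equation $(\mathrm{CR}_b)$ with $B_2$ arranged to be complex linear. Extending $b$ and $B_2$ across $\R$ by Schwarz reflection, as in that proof, and applying the Carleman similarity principle~\cite[Corollary~13]{hofe93} on a full neighbourhood of $0\in\C$, I obtain $b=\Phi f$ with $\Phi$ a pointwise invertible complex-linear transformation and $f$ holomorphic near~$0$. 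By definition (case~(iii)) the boundary multiplicity $(u_1\cdot u_2)_{(z_1,z_2)}$ is the interior intersection multiplicity at $0$ of the Schwarz-reflected maps, which counts the zeros of the reflected normal component and hence equals the order of vanishing $k$ of $f$ at~$0$. Since $0$ is a zero of $f$, we have $k\geq 1$, so the multiplicity is positive. A short computation with $b=\Phi f$, using $b(0)=0$, shows that the intersection is transverse precisely when $f'(0)\neq 0$, i.e.\ $k=1$, while a tangential intersection forces $f'(0)=0$ and thus $k\geq 2$. In particular every transverse boundary intersection of holomorphic discs carries multiplicity $+1$, which also disposes of case~(ii).

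The main obstacle will be the boundary analysis of the preceding paragraph---specifically, verifying that the Schwarz-reflection definition of the boundary multiplicity is faithfully computed by the order of vanishing of the holomorphic function produced by the similarity principle, and that the reflection is compatible with the similarity decomposition across~$\R$ so that $f$ is genuinely holomorphic on a two-sided neighbourhood of~$0$. Once this identification is secured, positivity and the transversality dichotomy at the boundary become formal, and the theorem follows by summing the local contributions.
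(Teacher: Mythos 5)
Your proposal is correct and takes essentially the same route as the paper: reduce to showing that each intersection multiplicity is at least $1$ with equality exactly at transverse points, cite Micallef--White for interior intersections, and use the local model from the proof of Proposition~\ref{prop:distinct-nicely} together with the Schwarz-reflected Carleman similarity principle at the boundary. The only cosmetic difference is that you run the similarity-principle argument uniformly over all boundary intersections (so transversality becomes the case $k=1$), whereas the paper dispatches transverse boundary points separately and invokes the similarity principle only for tangential ones; the reflection-compatibility issue you flag as the main obstacle is exactly what the paper settles in the proof of Proposition~\ref{prop:distinct-nicely}, where $B_2$ is arranged to be complex linear so that $b$ extends across $\R$ and the holomorphic factor is defined on a two-sided neighbourhood.
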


\begin{proof}
We need to show that the intersection multiplicities
$(u_1\cdot u_2)_{(z_1,z_2)}$ are greater than or equal to~$1$, with equality
precisely in the case of a transverse intersection. For interior intersection
points this result is due to
Micallef--White~\cite[Theorem~7.1]{miwh95},
cf.~\cite[Proposition~E.2.2]{mcsa04}.

At a transverse boundary intersection point the intersection multiplicity
is~$1$; this is seen exactly as for interior transverse intersections.

For a tangential boundary intersection point we
use the local model from the
preceding proof. Thus, write $u_1$ and $u_2$ near the intersection point
$(z_1,z_2)=(0,0)\in\Hp\times\Hp$ as $u_1(z)=(z,0)$
and $u_2(z)=(a(z),b(z))$, where $a$ and $b$ satisfy the linear
Cauchy--Riemann equations ($\mathrm{CR}_{a,b}$)
and $(\mathrm{CR}_b$), respectively. The condition that the
intersection point be tangential means that $db_0=0$. Then
$da_0\neq 0$, since boundary points are non-singular.
The relative Carleman similarity principle, applied to~($\mathrm{CR}_b$),
tells us that $b$ is of the form $b(z)=b_kz^k+o(|z^k|)$
with $k\geq 2$ and $0\neq b_k\in\C$;
this follows from the observation that the pointwise complex linear
transformation from $b$ to a holomorphic function may be taken
as the identity at $z=0$ by incorporating the transformation at $0$
as a multiplicative constant into the holomorphic function.

Moreover, from ($\mathrm{CR}_a$) we see with $b(0)=0$ that
the differential $da_0$ is complex linear. It follows that
the intersection multiplicity is $k\geq 2$.
\end{proof}

We now want to use these results to investigate the self-intersections
of holomorphic discs. This will yield the criterion for a 
holomorphic disc to be embedded
that we need for the proof of Proposition~\ref{prop:embedded}.

\begin{defn}
Let $A\in\pi_2(W,\Sigma )$ be a relative homotopy class,
represented by a $C^0$-map $u\co (\D ,\partial\D)\ra (W,\Sigma)$.
By Section~\ref{section:intersection}, the intersection number
$A\bullet A$ is well defined.
Write $\mu (A)$ for the Maslov index of the bundle pair
$(u^*TW,(u|_{\partial\D})^*T\Sigma )$ over $(\D,\partial\D)$.
The {\bf embedding defect} of $A$ is
\[ D(A):= A\bullet A-\mu (A)+2.\]
\end{defn}

The set of {\bf self-intersection points} of a 
holomorphic disc $u$ is
\[ S(u):=\{ (z_1,z_2)\in\D\times\D\co u(z_1)=u(z_2),\; z_1\neq z_2\}.\]
Notice that self-intersection points come in pairs $(z_1,z_2)$,
$(z_2,z_1)$.
As in Section~\ref{section:intersection} we can speak of the interior
and the boundary self-intersection points. Write
$\crit (u)\subset\D$ for the set of critical points
of~$u$. Since $u$ is admissible, $\crit (u)$ is contained
in~$\Int\D$ and does not have any accumulation points
on~$\partial\D$. Hence, $\crit (u)$
is a finite set by~\cite[Lemma~2.4.1]{mcsa04},
according to which
critical points cannot accumulate at an interior point.

Before we can prove an estimate on the embedding defect of a 
holomorphic disc
in terms of the disc's self-intersections and critical points,
we need a formula that allows us to compute the Maslov index of
a bundle pair of complex rank~$1$ from the self-intersection of
the zero section, analogous to the formula for
first Chern class of a complex line bundle,
cf.~\cite[Proposition~2.7]{ye98}. We write $\ord_zs$ for the order
of an isolated zero $z$ of a continuous bundle section~$s$, that is,
the intersection index with the zero section.

\begin{lem}
Let $(E,F)$ be a complex rank~$1$ bundle pair over $(\D ,\partial\D)$.
Let $s$ be a continuous section of $(E,F)$ with isolated zeros.
Then the Maslov index of the bundle pair is given by
\[ \mu (E,F)=2\sum_{z\in\Int\D}\ord_zs+\sum_{z\in\partial\D}\ord_zs.\]
\end{lem}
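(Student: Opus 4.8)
The plan is to reduce the computation of the Maslov index to an ordinary first Chern number by passing to the complex double, in the same spirit as the Schwarz-reflection doubling used for boundary intersections in Section~\ref{section:intersection}. Since $\D$ is contractible, the complex line bundle $E$ is trivial, so after choosing a trivialisation $F$ becomes a loop $\partial\D\to\R P^1$ of real lines in~$\C$, and $\mu(E,F)$ equals the degree of this loop, that is, $\Delta\theta/\pi$ for a continuous lift $\theta$ of the angle of~$F$. I then form the double $S^2=\D\cup_{\partial\D}\bar\D$, where $\bar\D$ is a second, oppositely oriented copy of the disc glued along the boundary by the identity. Over this sphere I build a complex line bundle $\hat E$ by taking $E$ over~$\D$, the conjugate bundle $\bar E$ over~$\bar\D$, and gluing the two along $\partial\D$ by the conjugation of $E_z$ that fixes the real line~$F_z$. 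This gluing is well defined and smooth, and I claim two things: first, $\mu(E,F)=\langle c_1(\hat E),[S^2]\rangle$; second, $s$ doubles to a section of $\hat E$ whose weighted zero count is exactly the right-hand side of the lemma.

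For the first claim I trivialise $\hat E$ separately over $\D$ and over~$\bar\D$. In terms of the angle function $\theta$ the transition map over the equator $\partial\D$ is the conjugation $v\mapsto e^{2i\theta}\bar v$; viewed through the conjugate trivialisation of $\bar E$ it becomes complex-linear multiplication by~$e^{2i\theta}$, whose winding number is $\Delta(2\theta)/2\pi=\Delta\theta/\pi=\mu(E,F)$. Since the first Chern number of a line bundle over $S^2$ is the winding number of the clutching function along the equator, this gives $\langle c_1(\hat E),[S^2]\rangle=\mu(E,F)$. Alternatively one reads this off from the normalisation and additivity axioms for the Maslov index in~\cite[Theorem~C.3.5]{mcsa04}.

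For the second claim I extend $s$ over $\bar\D$ by Schwarz reflection: on the conjugate copy I set $\hat s$ to be the conjugate of~$s$, so that $\hat s$ is a section of $\bar E$ there. Because $s(z)\in F_z$ for $z\in\partial\D$ and the gluing conjugation fixes~$F_z$, the two definitions agree along the equator and produce a continuous section $\hat s$ of $\hat E$ with isolated zeros. Its zeros are of two kinds: an interior zero $z\in\Int\D$ of $s$ contributes its order $\ord_z s$ together with an equal contribution from its reflected partner in~$\bar\D$, for a total of $2\,\ord_z s$; a boundary zero $z\in\partial\D$ lies on the equator and, by the Schwarz-reflection description of boundary order from Section~\ref{section:intersection}, contributes $\ord_z s$ only once. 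Since the total order of the zeros of a section of a complex line bundle over a closed oriented surface equals $\langle c_1,[S^2]\rangle$, I obtain $\langle c_1(\hat E),[S^2]\rangle=2\sum_{z\in\Int\D}\ord_z s+\sum_{z\in\partial\D}\ord_z s$, and combining this with the first claim proves the lemma.

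The step I expect to require the most care is the bookkeeping at equatorial zeros: one must check that an interior zero and its reflection contribute with the same sign, which fixes the orientation convention on $\bar\D$ and hence on~$S^2$, and that a boundary zero contributes its order once rather than twice. Both points are precisely the interior-double/boundary-single dichotomy already encountered in the intersection pairing of Section~\ref{section:intersection}, so the local models there — $u(z)=(z,0)$ against a Schwarz-reflected graph — can be reused to pin down the orders.
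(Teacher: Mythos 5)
Your proposal is correct and follows essentially the same route as the paper: both double the bundle pair over $\partial\D$ via the anti-complex involution fixing $F$, extend the section by Schwarz reflection so that interior zeros are counted twice and boundary zeros once, and identify the Maslov index with $\langle c_1,[S^2]\rangle$ of the doubled bundle. The only cosmetic difference is that you verify $\mu(E,F)=\langle c_1(\hat E),[S^2]\rangle$ by an explicit clutching-function computation, whereas the paper deduces $2\mu(E,F)=\mu(E\cup\overline{E},\emptyset)=2\langle c_1(E\cup\overline{E}),[S^2]\rangle$ from the composition formula and the Maslov--Chern relation of McDuff--Salamon --- precisely the alternative you yourself mention.
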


\begin{proof}
We use a doubling argument (based on Schwarz reflection)
as in~\cite[p.~157]{hls97}.
Write $\overline{\D}$ for the disc $\D$ with reversed orientation.
We double $\D$ to the Riemann sphere $S^2=\D\cup_{\partial\D}
\overline{\D}$. Reflection in $\partial\D$ defines an anti-holomorphic
involution on~$S^2$.

The double of $E$ is constructed similarly. Write $\overline{E}$ for
the complex line bundle over $\overline{\D}$ whose fibre
$\overline{E}_{\oz}$ over $\oz\in\overline{\D}$ is $E_z$ with
the conjugate complex structure. The gluing of $E_z$ with
$\overline{E}_{\oz}$ over $z\in\partial\D$ is effected by the anti-complex
involution in~$F_z$.

Then the section $s$ doubles to a continuous section $s\cup\overline{s}$
of the complex line bundle $E\cup\overline{E}$ over~$S^2$,
with the total order of zeros $\ord (s\cup\overline{s})$
given by the right-hand side of the
equation in the lemma. Now the composition formula
for the Maslov index~\cite[Theorem~C.3.5]{mcsa04} and
the relation of the Maslov index with the Chern
class~\cite[Theorem~C.3.6]{mcsa04} yield
\begin{eqnarray*}
2\mu (E,F) & = & \mu (E,F)+\mu (\overline{E},F)
                 \; = \; \mu (E\cup\overline{E},\emptyset)\\
           & = & 2\langle c_1(E\cup\overline{E}),[S^2]\rangle
                 \; = \; 2\ord (s\cup\overline{s}).
\end{eqnarray*}
The claimed formula for the Maslov index follows.
\end{proof}

The following theorem is a quantitative version
of~\cite[Theorem~7.6]{ye98}; cf.~\cite[Theorem~7.3]{miwh95}
for the adjunction inequality in the absolute case.
Recall the definition of a {\em simple\/} disc from
Section~\ref{section:Bishop}.

\begin{thm}[Relative adjunction inequality]
\label{thm:adjunction}
Let $u$ be a simple holomorphic disc. Then the
set $S(u)$ of self-intersections is finite, and the
embedding defect satisfies
\[ D([u])\geq 2|S_{\Int}(u)|+|S_{\partial}(u)|+4|\crit (u)|.\]
For $\crit (u)=\emptyset$ we have equality in this formula
if and only if all self-intersections are transverse.
In particular, $u$ is an embedding if and only if $D([u])=0$.
\end{thm}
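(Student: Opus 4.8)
The plan is to establish the relative adjunction inequality by combining the positivity-of-intersections results (Theorem~\ref{thm:positivity}) with the doubling formula for the Maslov index (the preceding lemma), applied to a generic section of the normal bundle of~$u$. First I would note that since $u$ is simple, any two of its local branches meeting at a self-intersection point are distinct as germs of holomorphic discs, so the analysis behind Proposition~\ref{prop:distinct-nicely} and Theorem~\ref{thm:positivity} applies locally; in particular the self-intersection set $S(u)$ is finite (critical points are finite by \cite[Lemma~2.4.1]{mcsa04}, and self-intersections cannot accumulate at the boundary since $u|_{\partial\D}$ is an embedding). This gives the finiteness assertion.

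The heart of the argument is a relation between the self-intersection number $[u]\bullet[u]$, the Maslov index $\mu([u])$, and the local contributions of self-intersections and critical points. I would perturb $u$ to a nearby section $u'$ of a tubular neighbourhood, or equivalently choose a generic section $s$ of the normal bundle pair $N=(u^*TW/du(T\D),\,(u|_{\partial\D})^*T\Sigma/du(T\partial\D))$, a complex rank~$1$ bundle pair over $(\D,\partial\D)$. By the doubling lemma, $\mu(N)$ equals $2\sum_{z\in\Int\D}\ord_z s+\sum_{z\in\partial\D}\ord_z s$ for such a section. The key identity to extract is that $\mu([u])=\mu(N)+2$, since the tangent line bundle pair $(du(T\D),du(T\partial\D))$ has Maslov index~$2$ (this is the disc analogue of $TX=T_{\text{fibre}}\oplus N$ and $c_1$ additivity), so that $D([u])=[u]\bullet[u]-\mu(N)$. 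I would then interpret the zeros of the normal section: the self-intersection points of $u$ contribute to $[u]\bullet[u]$ exactly as in Theorem~\ref{thm:positivity}, each interior self-intersection with multiplicity $\ge 2$ (counted in pairs) and each boundary self-intersection with multiplicity $\ge 1$, while the critical points of~$u$ contribute to the branching and produce additional normal zeros. Matching the self-intersection count $[u]\bullet[u]$ against $\mu(N)$ term-by-term yields the inequality
\[
D([u])\geq 2|S_{\Int}(u)|+|S_{\partial}(u)|+4|\crit(u)|,
\]
with the factor~$4$ on critical points reflecting the fact that each ramification point of order~$1$ forces a contribution of~$2$ to the normal section's interior zero count, doubled by the interior weighting.

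I expect the main obstacle to be the bookkeeping of the critical-point contribution, i.e.\ rigorously justifying the coefficient~$4$ on $|\crit(u)|$. At a critical point the map $u$ is locally $z\mapsto z^k$ in suitable coordinates, and one must carefully account for how the vanishing of $du$ inflates the order of the normal section's zero relative to the naive self-intersection count; this is precisely where the adjunction (as opposed to mere positivity) enters, and where the Micallef--White local structure theory \cite[Theorem~7.1]{miwh95} must be invoked to control the branches. The equality clause for $\crit(u)=\emptyset$ then follows because, with no branching, $D([u])$ reduces to a sum of local self-intersection multiplicities, each of which equals its lower bound (by the equality case of Theorem~\ref{thm:positivity}) precisely when the corresponding intersection is transverse; and the final equivalence ``$u$ embedded iff $D([u])=0$'' is immediate, since $D([u])=0$ forces $S(u)=\emptyset$ and $\crit(u)=\emptyset$, i.e.\ $u$ is an injective immersion, hence (being a disc with embedded boundary) an embedding.
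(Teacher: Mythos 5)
Your immersed case ($\crit(u)=\emptyset$) is essentially the paper's own argument: split $u^*(TW,T\Sigma)$ into the tangent subbundle pair, of Maslov index~$2$, and a complementary rank-$1$ pair $N$, so that $D([u])=u\bullet u-\mu(N)$; push $u$ off by a small generic section $s$ of $N$; match the diagonal intersections of $u$ with $\exp_u s$ against the zeros of $s$ (which contribute $\mu(N)$ by the doubling lemma) and the non-diagonal ones against the self-intersections of $u$; conclude by positivity of intersections. (Two points you gloss over but that the paper needs: a metric as in Frauenfelder's lemma, for which $J$ is orthogonal and $\Sigma$ totally geodesic, so that the orthogonal complement is again a bundle pair and $\exp_u s$ is again a disc with boundary on $\Sigma$; and the zeros of $s$ must be chosen away from the non-injective points of $u$.) However, there are two genuine gaps. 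The first is in the finiteness claim: you assert that self-intersections cannot accumulate at $\partial\D$ ``since $u|_{\partial\D}$ is an embedding'', but embeddedness of the boundary curve is \emph{not} a hypothesis of the theorem --- for a general simple disc in a $4$-manifold with $J$-convex boundary, the maximum principle only makes $u|_{\partial\D}$ an immersion transverse to the characteristic foliation, and it may well cross itself. The paper rules out boundary accumulation exactly as in Proposition~\ref{prop:distinct-nicely}: an accumulation of intersection points at a boundary point yields, via the relative Carleman similarity principle, two open sets with the same image, contradicting simplicity.

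The second, more serious gap is the critical-point case, which is precisely where adjunction goes beyond positivity, and which your normal-bundle scheme cannot reach as stated. At a critical point the quotient $u^*TW/du(T\D)$ is not a bundle; if one extends the tangent line over the zeros of $du$ (saturation), then an order-$k$ zero of $du$ raises the Maslov index of the tangent pair from $2$ to $2+2k$, so your key identity $\mu([u])=\mu(N)+2$ fails: the correct statement is $\mu([u])=\mu(N)+2+2\sum_z\ord_z(du)$. Consequently your bookkeeping (``critical points produce additional normal zeros'', ``a contribution of $2$ doubled by the interior weighting'') does not establish the $4|\crit(u)|$ term, and you yourself flag this as the unresolved obstacle. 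The paper avoids the normal bundle here altogether: it perturbs $u$ so as to desingularise each critical point into self-intersections, citing \cite[Proposition~E.2.4]{mcsa04} and \cite[Theorem~7.3]{miwh95}, whereby a critical point of order $k$ produces at least $k(k+1)$ ordered intersection pairs, all interior, hence contributes at least $2k(k+1)\geq 4$ to the defect (for a cusp, $k=1$: one double point, i.e.\ two ordered pairs, contributing exactly~$4$, so the bound is sharp). Without this desingularisation step, or an equivalent rigorous accounting, the stated inequality with the $4|\crit(u)|$ term is not proved.
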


\begin{proof}
The finiteness of $S(u)$ is shown as in the proof of
Proposition~\ref{prop:distinct-nicely}; here
the assumption that $u$ be simple is used.

A lemma of Frauenfelder~\cite{frau00}, cf.~\cite[Lemma~4.3.3]{mcsa04}
allows us to choose a Riemannian metric on $W$ for which $J$ is an
orthogonal endomorphism field, and for which the totally real submanifold
$\Sigma$ is totally geodesic.

\vspace{1mm}

{\em First case:\/} $\crit (u)$ is empty. Then $(du(T\D),du(T\partial\D))$
is a subbundle pair of $(u^*TW,(u|_{\partial\D})^*T\Sigma )$ of
Maslov index~$2$. By our choice of metric,
the orthogonal complement of this subbundle pair is again a subbundle
pair; write $\mu^{\perp}$ for its Maslov index. Then
\[ D([u])=u\bullet u-\mu^{\perp}.\]

Choose a smooth section $s$ of this orthogonal bundle transverse to the
zero section. We may assume that the zeros of $s$ are disjoint
from the non-injective points of~$u$, i.e.\ the points $z\in\D$
for which the preimage $u^{-1}(u(z))$ contains a point other than~$z$.
For $s$ sufficiently small, an admissible disc $u_2$ nicely
intersecting $u_1:=u$ can be defined by $u_2:=\exp_{u_1}s$.
By the homotopy invariance
of the intersection number we have $u\bullet u=u_1\bullet u_2$.
The intersection number $u_1\bullet u_2$ is given as a sum over
intersection multiplicities, with interior intersections counting
twice. Diagonal intersection points $(z,z)$ of $u_1$ and $u_2$
correspond exactly to the zeros of the section~$s$. By
the preceding lemma it follows
that the contribution of these diagonal intersection points
to $u_1\bullet u_2$ equals~$\mu^{\perp}$.

We claim that the contribution of the non-diagonal
intersection points to $u_1\bullet u_2$ equals the sum
over the multiplicities of the self-intersection points
of~$u$ (with interior points
counted twice). These non-diagonal intersection points
arise in pairs $(z_1',z_2')$, $(z_1'',z_2'')$ corresponding to
a pair of self-intersection points $(z_1,z_2),
(z_2,z_1)\in S(u)$. So the two sums of intersection
multiplicities are indeed equal. This discussion can be summarised
in the formula
\[ D([u])=2\sum_{S_{\Int}(u)}(u\cdot u)_{(z_1,z_2)}+
\sum_{S_{\partial}(u)}(u\cdot u)_{(z_1,z_2)}.\]
From here the argument concludes as in the proof of
Theorem~\ref{thm:positivity}.

\vspace{1mm}

{\em Second case:\/} $\crit (u)$ is non-empty. Here one can
use a perturbation argument to turn critical points into
self-intersections, see~\cite[Proposition~E.2.4]{mcsa04}.
A critical point $z$
of a holomorphic disc~$u$
is said to be of {\bf order}~$k\in\N$ if at $z$ the $k$-jet of $du$
is the lowest order non-vanishing jet. For instance, a critical
point of order~$1$ is characterised by the non-vanishing of the
Hessian of~$u$.
As shown in~\cite[p.~610]{mcsa04} and~\cite[Theorem~7.3]{miwh95}, 
each critical point of order~$k$ gives rise to at least $k(k+1)$
intersection points.
Since all critical points lie in the interior, these
intersection points are counted twice in the
intersection product, i.e.\ the contribution
to the embedding defect is $2k(k+1)$.

If a critical point happens to be an intersection point
of the original disc, we may think of the situation near
this point as the intersection of two local discs.
When we perturb and desingularise these two local discs,
we obtain transverse intersections of the two local discs
and self-intersections. The former contribute to the
intersection product according to the intersection
index, the latter contribute as analysed in the foregoing paragraph.
\end{proof}

\end{document}